\theoremstyle{plain}
\newtheorem{theo}{Theorem}[section]
\newtheorem{alphtheo}{Theorem}
\newtheorem{lemm}[theo]{Lemma}
\newtheorem{coro}[theo]{Corollary}
\newtheorem{prop}[theo]{Proposition}
\theoremstyle{definition}
\newtheorem{alphconj}{Conjecture}
\newtheorem{defi}[theo]{Definition}
\theoremstyle{remark}
\newtheorem{rema}[theo]{Remark}
\newtheorem{exem}[theo]{Example}
\def\C{\mathbb{C}}
\def\P{\mathbb{P}}
\def\Q{\mathbb{Q}}
\def\R{\mathbb{R}}
\def\Z{\mathbb{Z}}
\def\O{\mathcal{O}}
\def\D{\mathbb{D}}
\def\F{\mathcal{F}}
\def\bydef{\coloneqq}
\let\leq\leqslant\let\geq\geqslant
\def\ie{\textit{i.e.}} \def\eg{\textit{e.g.}}
\def\ae{\hspace{.5cm}\Vert} 
\DeclareRobustCommand\diff{\@ifnextchar[{\@@diff}{\@diff}}
\def\@diff{\mathop{}\!{\mathrm{d}}}
\def\@@diff[#1]{\cramped{\diff^{#1}}} 
\DeclareRobustCommand\Diff{\@ifnextchar[{\@@Diff}{\@Diff}}
\def\@Diff{\mathop{}\!{\mathrm{D}}\cdot}
\def\@@Diff[#1]{\cramped{\mathop{}\!{\mathrm{D}^{#1}}}\cdot} 
\DeclareMathOperator{\logp}{log^{+}} 
\DeclareMathOperator{\ord}{ord} 
\DeclareMathOperator{\mult}{mult}
\DeclareMathOperator{\Spec}{Spec}
\DeclareMathOperator{\Aut}{Aut}
\DeclareMathOperator{\ch}{ch}
\DeclareMathOperator{\Grad}{Grad_{\bullet}}
\DeclarePairedDelimiter{\abs}{\lvert}{\rvert} 
\DeclarePairedDelimiter{\Abs}{\lVert}{\rVert} 
\DeclarePairedDelimiter{\Set}{\{}{\}}
\def\suchthat{
  \mathchoice
  {\mathrel{\textsl{\Large|}}}
  {\mathrel{\textsl{\large|}}}
  {\mathrel{\textsl{|}}}
  {\mathrel{\textsl{\small|}}}
}
\def\tprod{\mathop{\textstyle\prod}}
\def\tsum{\mathop{\textstyle\sum}}
\def\myfrac#1#2{
  \mathchoice
  {\raisebox{.3ex}{\scalebox{.9}{\(#1\)}}/\raisebox{-.3ex}{\scalebox{.9}{\(#2\)}}}
  {\raisebox{.3ex}{\scalebox{.9}{\(#1\)}}/\raisebox{-.3ex}{\scalebox{.9}{\(#2\)}}}
  {\raisebox{.1ex}{\(\scriptstyle #1\)}/\raisebox{-.1ex}{\(\scriptstyle #2\)}}
  {\raisebox{.1ex}{\(\scriptscriptstyle #1\)}/\raisebox{-.1ex}{\(\scriptscriptstyle #2\)}}
}
\def\T_#1{{\Omega^{\mathrlap{\vee}}}_{#1}}
\begin{document}
\title{Orbifold hyperbolicity}
\author{Frédéric Campana}
\email{frederic.campana@univ-lorraine.fr}
\address{Institut de Mathématiques Élie Cartan, Université de Lorraine, B.P. 70239, 54506 Vand\oe{}uvre-lès-Nancy Cedex, France \& KIAS, 85 Hoegiro, Dongdaemungu, Seoul 130-722, South Korea.}

\author{Lionel Darondeau}
\email{lionel.darondeau@normalesup.org}
\address{KU Leuven, Departement Wiskunde, Celestijnenlaan 200B, 3001 Heverlee, België.}
\curraddr{IMAG, Univ Montpellier, CNRS, Montpellier, France.}

\author{Erwan Rousseau}
\email{erwan.rousseau@univ-amu.fr}
\address{Institut Universitaire de France \& Aix Marseille Univ, CNRS, Centrale Marseille, I2M, Marseille, France.}

\hypersetup{ pdfauthor={F. Campana \& L. Darondeau \& E. Rousseau}, pdftitle={Orbifold hyperbolicity}}

\subjclass[2010]{32Q45; 32H30; 32L20.}
\keywords{Green--Griffiths--Lang's conjectures, orbifold pairs, vanishing theorems, entire curves, Kobayashi hyperbolicity.}
\thanks{
  This work has been carried out in the framework of Archimède Labex (ANR-11-LABX-0033) and of the AMIDEX project (ANR-11-IDEX-0001-02), funded by the ``Investissements d'Avenir'' French Government program managed by the French National Research Agency (ANR).
  Erwan Rousseau was partially supported by the ANR project ``FOLIAGE'', ANR-16-CE40-0008.
  Lionel Darondeau is a postdoctoral fellow of The Research Foundation - Flanders (FWO).
  L.D and E.R. thank the KIAS where part of this work was done.
}

\begin{abstract}
  We define and study jet bundles in the geometric orbifold category. We show that the usual arguments from the compact and the logarithmic settings do not all extend to this more general framework. This is illustrated by simple examples of orbifold pairs of general type that do not admit any global jet differential, even if some of these examples satisfy the Green--Griffiths--Lang conjecture. This contrasts with an important result of Demailly (2010) proving that compact varieties of general type always admit jet differentials. We illustrate the usefulness of the study of orbifold jets by establishing the hyperbolicity of some orbifold surfaces, that cannot be derived from the current techniques in Nevanlinna theory. We also conjecture that Demailly's theorem should hold for orbifold pairs with smooth boundary divisors under a certain natural multiplicity condition, and provide some evidence towards it.
\end{abstract}

\maketitle


\section{Introduction}
\label{se:intro}
\subsection{Orbifold hyperbolicity}
The main goal of this paper is to define and study the hyperbolicity of orbifold pairs in the spirit of the program developed in~\cite{Ca04}.
A \textsl{smooth orbifold pair} is a pair \((X,\Delta)\), where \(X\) is a smooth projective variety and where \(\Delta\) is a \(\Q\)-divisor on \(X\) with only normal crossings and with coefficients between \(0\) and \(1\).
In analogy with ramification divisors (see below), it is very natural to write
\[
  \Delta
  =
  \sum_{i\in I}
  (1-\myfrac{1}{m_{i}}) \Delta_{i},
\]
with \textsl{multiplicities} \(m_{i}\) in \(\Q_{\geq1}\cup\Set{+\infty}\).
The multiplicity \(1\) corresponds to the so-called ``compact case'' (empty boundary divisor).
The multiplicity \(+\infty\) corresponds to the so-called ``logarithmic case'' (reduced boundary divisor).
The \textsl{canonical bundle} of an orbifold pair \((X,\Delta)\) is the \(\Q\)-line bundle \(K_{X}+\Delta\).

The general philosophy in complex hyperbolicity is that varieties with positive canonical bundles are (weakly) hyperbolic, in the sense that these admit no (or few) nonconstant entire curves.
Here, we consider \textsl{orbifold entire curves} \(f\colon\C\to(X,\Delta)\) \ie{} entire curves \(f\colon\C\to X\) such that \(f(\C)\not\subset\abs{\Delta}\) and \(\mult_{t}(f^{\ast}\Delta_{i})\geq m_{i}\) for all \(i\) and all \(t\in\C\) with \(f(t)\in\Delta_{i}\).
In a modern point of view, these curves are nothing but the morphisms \((\C,\varnothing)\to(X,\Delta)\) in the orbifold category. But these are actually also the central objects of the Nevanlinna theory of values distribution.
These curves have hence been studied extensively since the beginning of the \(20\)\textsuperscript{th} century.

An orbifold pair \((X,\Delta)\) is \textsl{of general type} if its canonical bundle \(K_{X}+\Delta\) is big.
The following natural generalization to the orbifold category of the Green--Griffiths--Lang conjecture will be the common thread of this paper.
\begin{alphconj}
  \label{conj:orbi-lang}
  If \((X, \Delta)\) is an orbifold pair of general type, then there exists a proper closed subvariety \(Z\subsetneq X\) containing the images of all nonconstant orbifold entire curves \(f\colon\C\to(X,\Delta)\).
\end{alphconj}

Since the seminal works of Bloch and Green--Griffiths \cite{GG80}, one successful approach to study hyperbolicity problems in the usual (\ie{} compact or logarithmic) settings is the use of \textsl{jet differentials} vanishing on an ample divisor, which can be viewed as algebraic differential equations satisfied by nonconstant entire curves (see \cite{De95} and \cite{DL01}).
It is most natural to define \textsl{orbifold jet differentials} to be the logarithmic jet differentials acting holomorphically on orbifold entire curves (see Sect.~\ref{se:definition} and Remark~\ref{rema:corr}).

In the direction of the Green--Griffiths--Lang conjecture in the compact case (\(\Delta=\varnothing\)), the jet differential approach culminates with the following remarkable recent theorem of Demailly~\cite{Dem11} (see also~\cite{Mer15} for the case of hypersurfaces in projective spaces):
\begin{theo}[(Demailly)]
  \label{theo:demailly}
  If a variety \(X\) is of general type, it admits nonzero global jet differentials vanishing on an ample divisor.
  (The converse holds too, by~\cite{CP15}.)
\end{theo}
The proof of Demailly can be adapted \textit{mutatis mutandis} to hold in the logarithmic category.
A spontaneous question is hence to extend this result to the broader orbifold setting.

\subsection{Main results}
Using jet differentials, we provide new positive results towards the orbifold Green--Griffiths--Lang conjecture.
The control of the cohomology of orbifold jet differentials tends to be much more difficult than in the usual (\ie{} compact or logarithmic) settings.
Nevertheless, for surfaces, we show that jet differentials can be used to prove hyperbolicity results, in situations where the tools of Nevanlinna theory (\eg{} Cartan's Second Main Theorem) cannot be used in the current state of the art. We combine jet differentials techniques with a generalization to the orbifold setting of results by McQuillan \cite{McQ98} and Bogomolov \cite{Bogo77} on curves tangent to holomorphic foliations on projective surfaces (see Sect.~\ref{sse:foliations}).
As an illustrative example, in Sect.~\ref{se:existence}, we prove the following.
\begin{alphtheo}
  \label{theo:cartan+}
  On \(X\bydef\P^{2}\), let \(\Delta\) consist of \(11\) lines in general position with orbifold multiplicity \(2\), then the orbifold Green--Griffiths--Lang conjecture holds.
  More precisely, any orbifold entire curve \(\C\to(X,\Delta)\) is constant.
\end{alphtheo}
Note that the more negative \(\Omega_{X}\) is and/or the smaller the multiplicities of \(\Delta\) are, the less positive \(K_{X}+\Delta\) is.
Hence, among surfaces, the case of \(\P^{2}\) with multiplicities \(2\) is particularly challenging.
In this paper we will always consider cases where \(K_{X}+\lfloor\Delta\rfloor\) (the reduced part of the canonical divisor) is not already big. If \(K_{X}+\lfloor\Delta\rfloor\) is big, orbifold curves can be dealt with using logarithmic technics.
Therefore varieties with nonpositive cotangent bundles (such as projective spaces, Abelian varieties, K3 surfaces) will be obvious choices for \(X\) to consider in examples.

More generally, we give numerical conditions for which the Riemann--Roch approach yields the existence of orbifold jet differentials vanishing on an ample divisor and we study various interesting geometric settings (see Sect.~\ref{se:existence}).
As an example:
\begin{alphtheo}
  Let \((X,\Delta)\) be a smooth orbifold surface such that \(K_{X}\) is trivial and \(\abs{\Delta}\) is a smooth ample divisor.
  If the orbifold multiplicity is \(m\geq5\) and if \(c_{1}(\abs{\Delta})^{2}\geq 10c_{2}(X)\), then
  \((X,\Delta)\) admits orbifold jet differentials vanishing on an ample divisor.
\end{alphtheo}

Pushing further our investigations, we have realized that the naive analog of Theorem~\ref{theo:demailly} does not hold anymore in the general orbifold setting!
We show that it is actually \emph{necessary} to strengthen the general type assumption in order to get orbifold jet differentials (see Sect.~\ref{se:nonexistence}).
As an illustrative example, we prove that on \(\P^{2}\), if \(\Delta\) is smooth of arbitrary degree, with orbifold multiplicity \(2\), there is no nonzero global jet differential.
More generally, we prove the following.
\begin{alphtheo}
  On \(\P^{n}\), if \(\Delta\) is smooth of arbitrary degree, with orbifold multiplicity \(m\leq n\), there is no nonzero global jet differential.
\end{alphtheo}

Given a pair \((X,\Delta)\) with \(\Delta=\sum(1-\myfrac{1}{m_{i}})\Delta_{i}\), we introduce new natural ``higher order'' orbifold structures on \(X\):
\[
  \Delta^{(k)}
  \bydef
  \sum_{i\in I}
  \left(1-\myfrac{k}{m_{i}}\right)^{+}
  \Delta_{i}
\]
where \(x^{+}\bydef\max\Set{x,0}\).

It is then noteworthy that most known results towards Conjecture~\ref{conj:orbi-lang} coming from Nevanlinna theory can be \textit{a posteriori} reformulated in terms of the positivity of a pair \((X,\Delta^{(\alpha)})\) (see Sect.~\ref{se:hyperbolicity}).
This confirms the naturality of these pairs (and also shows the necessity to work with rational orbifold multiplicities).

To extend Theorem~\ref{theo:demailly}, we propose the following conjecture, for which we can provide some evidence.
\begin{alphconj}
  \label{conj:orbiDem}
  A smooth orbifold \((X,\Delta)\) of dimension \(n\geq2\) with smooth boundary divisor
  admits nonzero global jet differentials vanishing on an ample divisor if and only if
  \((X,\Delta^{(n)})\) is of general type.
\end{alphconj}
The right-to-left implication should hold without the smoothness assumption on the boundary divisor. It holds at least (trivially) for the graded bundle associated to the Green--Griffiths filtration of the bundle of jet differentials (\textit{cf.} Proposition~\ref{prop:filtration} and above it for all notation):
\[
  \mathrm{Gr}_{\bullet}E_{k,N}\Omega_{\pi,\Delta}
  =
  \bigoplus_{\ell\in(\Z_{\geq0})^{k}\colon\Abs{\ell}=N}
  S^{\ell_{1}}\Omega_{\pi,\Delta^{(1)}}
  \otimes
  S^{\ell_{2}}\Omega_{\pi,\Delta^{(2)}}
  \otimes
  \dotsb
  \otimes
  S^{\ell_{k}}\Omega_{\pi,\Delta^{(k)}}.
\]
In the compact setting, the proof of Theorem~\ref{theo:demailly} relies basically on the fact that most sections of this graded bundle actually lifts to sections of the bundle of jet differentials.
This is hence a strong indication that the boundary divisor in our conjecture could fit in this approach.

We prove the left-to-right implication for Abelian varieties (see Sect.\ref{sse:abelian}).
\begin{alphtheo}
  Let \(X\) be an Abelian variety of dimension \(n\geq2\), and let \(\Delta\) be a smooth ample divisor.
  If \((X,\Delta)\) admits nonzero global jet differentials vanishing on an ample divisor then
  \((X,\Delta^{(n)})\) is of general type.
\end{alphtheo}

The striking examples towards Conjecture~\ref{conj:orbiDem} that we provide shed light on the impossibility to solve the Green--Griffiths--Lang conjecture using only jet differentials,
and shows again the relevance of the orbifold framework to test the standard techniques in a broader natural setting.

\subsection{The Core map}
To conclude this introduction, let us explain why orbifold structures arise very naturally in studying hyperbolicity of complex projective manifolds.

Let \(F\colon Y\to X\) be a holomorphic fibration between complex projective manifolds. Let \(\abs{\Delta}\subset X\) be the union of all codimension one irreducible components of the locus over which the scheme-theoretic fibre of \(F\) is not smooth.
For each component \(\Delta_{i}\) of \(\abs{\Delta}\), let \(D_{i}\bydef\sum_{j\in J} m_{i,j} D_{i,j}\) be the union of all components of \(F^{\ast}\Delta_{i}\) that are mapped surjectively onto \(\Delta_{i}\) by \(F\). Then one defines the multiplicity of \(F\) along \(\Delta_{i}\) by \(m_{i}\bydef m(F,\Delta_{i})\bydef\inf\Set{m_{i,j}, j\in J}\) and the \(\Q\)-divisor
\[
  \Delta(F)
  \bydef
  \sum_{i\in I}
  (1-\myfrac{1}{m_{i}}) \Delta_{i}.
\]
The pair \((X,\Delta(F))\) is called the \textsl{orbifold base} of the fibration \(F\). The fibration is said to be \textsl{of general type} if its orbifold base is of general type. A manifold \(Y\) is said to be \textsl{of special type} (or simply \textsl{special}) if there is no fibration of general type \(F\colon Y\to X\) with \(\dim X>0\). Equivalently \(Y\) is special if, for any \(p\geq1\), any rank-one coherent subsheaf \(L\subset\Omega_{Y}^{p}\) has Iitaka dimension \(\kappa(Y,L)<p\).

Then one has the following fundamental structure result:
\begin{theo}[\cite{Ca04}]
  There exists a unique (up to birational equivalence) fibration, called the \textsl{core map}, \(c_{X}\colon X \to C(X)\) such that the general fiber of \(c_{X}\) is special, \(c_{X}\) is constant if \(X\) is special and \(c_{X}\) is a fibration of general type otherwise.
\end{theo}
This construction arises naturally in the study of the birational classification of varieties. Conjecturally, it also describes the behaviour of entire curves (or more generally the Kobayashi metric) in general manifolds \(X\) (without any positivity assumption), as we shall now explain. On the one hand, it is conjectured in~\cite{Ca04} that the Kobayashi pseudometric of a complex projective manifold \(Y\) identically vanishes if and only if \(Y\) is special. On the other hand, we have seen the natural generalization Conjecture~\ref{conj:orbi-lang} of the Green--Griffiths--Lang conjecture.
Assuming this Conjecture~\ref{conj:orbi-lang}, one obtains that (``usual'') entire curves \(\C\to X\) are either contained in the fibers of the core map or in the inverse image by the core map of a proper closed subvariety. In particular, this would prove that if there is a Zariski dense entire curve in \(X\), then \(X\) is special.
In other words, if \(\dim C(X)>0\), then any nonconstant entire curve \(\C\to X\) is algebraically degenerate. The varieties of general type satisfy \(\dim C(X)=\dim(X)\). The varieties of special type satisfy \(\dim C(X)=0\).

Using the above core map theorem, one can also reformulate a famous conjecture of Lang: a smooth projective variety is Brody-hyperbolic (\ie{} does not contain any entire curve) if and only if it does not contain any special subvariety.
The right-to-left implication is an easy corollary of the Green--Griffiths--Lang conjecture.
More generally, without the hyperbolicity assumption, all entire curves \(\C\to X\) should be contained in the union of the special subvarieties of \(X\).

\section{Orbifold hyperbolicity}
\label{se:hyperbolicity}
\subsection{Orbifold entire curves}
Let us consider smooth orbifold pairs \((X,\Delta)\) for which the orbifold divisor \(\Delta\) can be written
\[
  \Delta
  \bydef
  \sum_{i\in I}
  \left(1-\myfrac{1}{m_{i}}\right)
  \Delta_{i},
\]
where \(\sum_{i\in I}\Delta_{i}\) is a normal crossing divisor on \(X\)
and where \(m_{i}\in\Z_{\geq1}\cup\Set{\infty}\) are at first (possibly infinite) integers.
To study hyperbolicity in this setting, one shall define \textsl{orbifold entire curves}. Two definitions could be considered.
\begin{defi}
  \label{defi:curve}
  An \textsl{orbifold entire curve} is a (nonconstant) entire curve \(f\colon\C\to X\)
  such that \(f(\C)\not\subset\abs{\Delta}\)
  and such that for all \(i\in I\) and for all \(t\in\C\) with \(f(t)\in\Delta_{i}\),
  \begin{description}
    \item[\textsl{divisible} orbifold curves]
      the multiplicity \(\mult_{t}(f^{\ast}\Delta_{i})\) at \(t\) is \emph{a multiple} of \(m_{i}\).
    \item[\textsl{geometric} orbifold curves]
      the multiplicity \(\mult_{t}(f^{\ast}\Delta_{i})\) at \(t\) is \emph{at least} \(m_{i}\).
  \end{description}
\end{defi}
The first definition fits well with the category of orbifolds in the stacky sense (or divisible orbifolds) but is usually unsuitable for applications to hyperbolicity questions as we shall now illustrate.

Examples constructed in \cite{Ca05} consist in smooth and simply connected projective surfaces \(S\) admitting a fibration \(g\colon S\to\P^{1}\) of general type.
In the divisible orbifold category, the orbifold base of these fibrations is defined using \(\gcd\) instead of \(\inf\) in the computation of the fibre multiplicities. Although the multiple fibres consist of several components, they are constructed in such way that the ``divisible'' orbifold base is trivial (\ie{} there are no ``divisible'' multiple fibres). Indeed, some components have multiplicity \(2\), while others have multiplicity \(3\) (this would be impossible for elliptic fibrations).

Recall that there is no nonconstant orbifold entire curve \(\C\to C\) (for both definitions) with values in an orbifold curve \((C,\Delta)\) of general type (the orbifold curve is said \textsl{hyperbolic}, see Corollary~\ref{coro:curve-gt} for a proof).
An idea to study the hyperbolicity of the surface \(S\) above is thus to look at the composed maps of the entire curves \(f\colon\C\to S\) with the fibration \(g\).
\begin{itemize}
  \item
    Working in the category of divisible orbifolds, the curves \(g\circ f\colon\C\to\P^{1}\) will certainly be orbifold for the (here trivial) orbifold structure induced by the fibration, but we do not get any restriction on \(f\).

  \item
    However, working in the category of geometric orbifolds, the curves \(g\circ f\colon\C\to\P^{1}\) will be orbifold for the general type orbifold curve \((\P^{1},\Delta(g))\). By hyperbolicity of the base, one obtains the expected algebraic degeneracy of any entire curve \(f\) in the fibers of the fibration \(g\).

    More generally, without assumption on the dimension, one obtains easily algebraic degeneracy (in the fibers of the fibration) for all fibrations of general type on a curve (see \cite{Ca05}).
\end{itemize}

According to these considerations, in all this paper we will consider orbifold curves only in the sense of the second definition. Using this definition, we can also consider rational orbifold multiplicities \(m_{i}\in\Q\).
We will denote \(f\colon\C\to(X,\Delta)\) an entire curve \(f\colon\C\to X\) which is orbifold for the structure \((X,\Delta)\).
As already mentioned in the introduction, these curves are also the curves studied in the well-established Nevanlinna theory of values distribution.

\subsection{Hyperbolicity}
Let us study the question of hyperbolicity of orbifold pairs \((X,\Delta)\). Namely, we want to study the geometry of entire curves \(f\colon\C\to(X,\Delta)\) and obtain some results towards Conjecture~\ref{conj:orbi-lang}.
Almost all known results in this direction come from Nevanlinna theory, more precisely from truncated Second Main Theorems.

\subsubsection{Projective spaces}
The first striking result, due to Cartan~(\cite{Cartan28},\cite[Cor.~3.B.46]{Kob98}), can be reformulated in the following way in our terminology:
\begin{theo}[(Cartan)]
  \label{theo:cartan}
  Let \(H_{1},\dotsc,H_{c}\) be \(c\) hyperplanes in general position in \(\P^{n}\) and consider the orbifold divisor
  \(\Delta\bydef\sum_{i=1}^{c}(1-\myfrac{1}{m_{i}})H_{i}\).
  If \((\P^{n},\Delta^{(n)})\) is of general type,
  then every orbifold entire curve \(f\colon\C\to(\P^{n},\Delta)\) is linearly degenerate.
\end{theo}
Note that the positivity condition
involved in the statement is a strengthening of the assumption of general type. It is typical of the kind of positivity conditions that we will encounter.

Several generalizations of Cartan's theorem have been obtained (see for example \cite{Ru09}) but applications to orbifolds are not so useful because of bad truncation levels.
Very recently a second main theorem with truncation level one has been obtained in \cite{HVX}, which implies the following:
\begin{theo}
  Let \(H\) be a generic hypersurface of degree \(d\geq15(5n+1)n^{n}\). If \(m>d\) then every orbifold entire curve \(f\colon\C\to(\P^{n},(1-\myfrac{1}{m})H)\) is algebraically degenerate.
\end{theo}

We see that in these results one needs either many components or high lower bounds on multiplicities. One of the goal of this work is to develop techniques which will enable to obtain statements on orbifold entire curves without such strong conditions.
Moreover, once algebraic degeneracy of orbifold entire curves is established, it is sometimes possible to look at stronger statements such as \textsl{hyperbolicity}, \ie{} nonexistence of nonconstant orbifold entire curves. This is illustrated by the following result.
\begin{theo}[{\cite[Cor.~4.9]{Rou10}}]
  \label{theo:hyperb}
  Let \(H_{1},\dotsc,H_{c}\) be \(c\) general hypersurfaces of degrees \(d_{i}\) in \(\P^{n}\) and consider the orbifold divisor
  \(\Delta\bydef\sum_{i=1}^{c}(1-\myfrac{1}{m_{i}})H_{i}\).
  If
  \(
  \sum_{i=1}^{c}(1-\myfrac{1}{m_{i}})d_{i} >2n
  \),
  then every orbifold entire curve \(f\colon\C\to(\P^{n},\Delta)\) contained in an algebraic curve is constant.
\end{theo}

Let us return to Theorem~\ref{theo:cartan+}, where we consider \(11\) lines in general position in \(\P^{2}\), with multiplicities \(2\).
In this case, \(K_{\P^{2}}+\Delta^{(2)}=K_{\P^{2}}<0\), so the theorem of Cartan cannot be applied.
However, once one knows algebraic degeneracy of entire curves (this is done in Corollary~\ref{coro:orbiCartan}), Theorem~\ref{theo:hyperb} yields even the hyperbolicity of the pair (\textit{cf.} Corollary~\ref{coro:GGL-P2+}).

\subsubsection{Abelian varieties}
After Cartan, one important result in the same direction is the truncated second main theorem on (semi-)Abelian varieties due to works of Noguchi, Winkelmann and Yamanoi. In particular, one obtains the following confirmation of Conjecture \ref{conj:orbi-lang} (see for example \cite{YamAb}):
\begin{theo}
  \label{theo:orbiab}
  Let \(A\) be an Abelian variety, let \(D\) be a smooth ample divisor and let \(m>1\).
  Then every orbifold entire curve
  \(f\colon\C\to(A,(1-\myfrac{1}{m})D)\)
  is algebraically degenerate.
\end{theo}

\subsubsection{Quotients of bounded symmetric domains}
A last class of examples is given by quotients of bounded symmetric domains.
Let \(D\) be a bounded symmetric domain such that the Bergman metric has holomorphic sectional curvature bounded from above by \(-1/\gamma\), and let \(\Gamma<\Aut(D)\) be a neat arithmetic subgroup. Then \(X\bydef D/\Gamma\) is a smooth quasi-projective algebraic variety and admits a smooth toroidal compactification \(\overline{X}\) with normal crossings boundary \(H\).
In this setting, Aihara and Noguchi have obtained the following result \cite{Nog91}:
\begin{theo}
  If \(K_{\overline{X}}+(1-\myfrac{\gamma}{m})H\) is big, then every orbifold entire curve
  \[
    f\colon\C\to(\overline{X},(1-\myfrac{1}{m})H)
  \]
  is algebraically degenerate.
\end{theo}

\section{Orbifold jet bundles}
\label{se:definition}
Let us now provide more detail on the definition of orbifold jet differentials.
For the logarithmic cotangent bundle we refer to Noguchi~\cite{Nog86} and for the logarithmic jet bundles we refer to Dethloff--Lu~\cite{DL01}.
\subsection{Adapted coverings}
\label{sse:adapted.coverings}
We consider smooth orbifold pairs \((X,\Delta)\).
Such pairs are studied using their \textsl{orbifold cotangent bundles} (\cite{CP15}).
Following the presentation used notably in \cite{C15}, it is natural to define these bundles on certain Galois coverings, the ramification of which is partially supported on \(\Delta\).

An orbifold divisor \(\Delta\) can be written uniquely
\[
  \Delta
  \bydef
  \sum_{i\in I}
  \left(1-\myfrac{1}{m_{i}}\right)
  \Delta_{i},
\]
where \(\sum_{i\in I}\Delta_{i}\) is a normal crossing divisor on \(X\)
and where for each \(i\in I\), \(m_{i}=a_{i}/b_{i}\),
for integers \(a_{i}>b_{i}\geq0\) that are coprime if \(b_{i}>0\).
If \(b_{i}=0\), by convention \(a_{i}=1\).

A Galois covering \(\pi\colon Y\to X\) from a smooth projective (connected) variety \(Y\) will be termed \textsl{adapted} for the pair \((X,\Delta)\) if
\begin{itemize}
  \item
    for any component \(\Delta_{i}\) of \(\abs{\Delta}\), \(\pi^{\ast}\Delta_{i}=p_{i}D_{i}\), where \(p_{i}\) is an integer multiple of \(a_{i}\) and \(D_{i}\) is a simple normal crossing divisor;
  \item
    the support of \(\pi^{\ast}\Delta+\mathrm{Ram}(\pi)\) has only normal crossings, and the support of the branch locus of \(\pi\) has only normal crossings.
\end{itemize}
There always exists such an adapted covering (\cite[Prop. 4.1.12]{Laz}).

Remark that if a covering is adapted for a divisor \(\sum_{i\in I}(1-\myfrac{b_{i}}{a_{i}})\Delta_{i}\), it is adapted for any divisor \(\sum_{i\in I}(1-\myfrac{b'_{i}}{a'_{i}})\Delta_{i}\) with \(a'_{i}\mid a_{i}\).
In particular, one could use a presentation of orbifold pairs with \(a_{i}\) and \(b_{i}\) nonnecessarily relatively prime. In what follows, we will not make this assumption anymore.
It is sometimes also convenient to allow \(a_{i}=b_{i}\).

For \(k\in\mathbb{N}\cup\Set{\infty}\), it will be useful to denote
\[
  \Delta^{(k)}
  \bydef
  \sum_{i\in I}
  \left(1-\myfrac{k}{m_{i}}\right)^{+}
  \Delta_{i},
\]
where \(x^{+}\bydef\max\Set{x,0}\).
As we shall soon illustrate, the ``multiplicities'' \((m_{i}-k)^{+}\in\Z_{\geq0}\cup\Set{\infty}\) appearing in the numerators of \(\Delta^{(k)}\) shall be interpreted geometrically as the minimal multiplicities of the \(k\)th derivative of an orbifold curve along the components \(\Delta_{i}\)
(see Definition~\ref{defi:curve}).
However, the orbifold multiplicity of \(\Delta^{(k)}\) along \(\Delta_{i}\) is \(m_{i}/\min(k,m_{i})\).

By what precedes, if \(\pi\) is an adapted covering for the pair \((X,\Delta)\), it is adapted for all the pairs \((X,\Delta^{(k)})\).
Note that \(\Delta^{(1)}=\Delta\) is the original orbifold divisor,
that \(\Delta^{(0)}=\sum_{i\in I}\Delta_{i}\) contains the support \(\abs{\Delta}=\sum_{i\in I\colon a_{i}>b_{i}}\Delta_{i}\) of \(\Delta\) (round-up),
and that \(\Delta^{(\infty)}=\sum_{i\in I\colon b_{i}=0}\Delta_{i}\) is the logarithmic part of \(\Delta\) (round-down).

Let \(\pi\colon Y\to X\) be a \(\Delta\)-adapted covering. For any point \(y\in Y\), there exists an open neighbourhood \(U\ni y\) invariant under the isotropy group of \(y\) in \(\Aut(\pi)\), equipped with centered coordinates \(w_{i}\) such that \(\pi(U)\) has coordinates \(z_{i}\) centered in \(\pi(y)\) and
\[
  \pi(w_{1},\dotsc,w_{n})
  =
  (z_{1}^{p_{1}},
  \dotsc,
  z_{n}^{p_{n}}),
\]
where \(p_{i}\) is an integer multiple of the coefficient \(a_{i}\) of \((z_{i}=0)\).
Here by convention, if \(z_{i}\) is not involved in the local definition of \(\Delta\) then \(a_{i}=b_{i}=1\).

\subsection{The orbifold cotangent bundle}
If all multiplicities are infinite (\(\Delta=\Delta^{(0)}\)), for any \(\Delta\)-adapted covering \(\pi\colon Y\to X\), we denote
\[
  \Omega_{\pi,\Delta}
  \bydef
  \pi^{\ast}\Omega_{X}(\log \Delta).
\]
Then the argument of \cite[Sect. 2.2]{C15} can be directly adapted to nonstrictly adapted coverings to define the \textsl{orbifold cotangent bundle} to be the vector bundle \(\Omega_{\pi,\Delta}\) fitting in the following short exact sequence:
\begin{equation}
  \label{eq:orbi_cotangent}
  0
  \to
  \Omega_{\pi,\Delta}
  \hookrightarrow
  \Omega_{\pi,\Delta^{(0)}}
  \stackrel{\mathrm{res}}{\longrightarrow}
  \bigoplus_{i\in I\colon m_{i}<\infty}
  \O_{\myfrac{\pi^{\ast}\Delta_{i}}{m_{i}}}
  \to
  0.
\end{equation}
Here the quotient is the composition of the pullback of the residue map
\[
  \pi^{\ast}\mathrm{res}
  \colon
  \pi^{\ast}\Omega_{X}(\log\Delta^{(0)})
  \to
  \bigoplus_{i\in I\colon m_{i}<\infty}
  \O_{\pi^{\ast}\Delta_{i}}
\]
with the quotients
\(
\O_{\pi^{\ast}\Delta_{i}}
\twoheadrightarrow
\O_{\myfrac{\pi^{\ast}\Delta_{i}}{m_{i}}}
\)
(\cite[\textit{loc. cit.}]{C15}).

Alternatively, the sheaf of orbifold differential forms adapted to \(\pi\colon Y\to(X,\Delta)\) is the subsheaf
\(
\Omega_{\pi,\Delta}
\subseteq
\Omega_{\pi,\abs{\Delta}}
\)
locally generated (in coordinates as above) by the elements
\[
  w_{i}^{\myfrac{p_{i}}{m_{i}}}
  \pi^{\ast}(\diff z_{i}/z_{i})
  =
  w_{i}^{-p_{i}(1-\myfrac{1}{m_{i}})}
  \pi^{\ast}(\diff z_{i}).
\]
Accordingly,
\(\Omega_{\pi,\Delta^{(j)}}\)
is the subsheaf locally generated by the elements
\[
  w_{i}^{\min(j,m_{i})p_{i}/m_{i}}
  \pi^{\ast}(\diff z_{i}/z_{i})
  =
  w_{i}^{-p_{i}(1-\myfrac{j}{m_{i}})^{+}}
  \pi^{\ast}(\diff z_{i}).
\]
For any \(j\geq1\), one has the inclusion of sheaves
\[
  \Omega_{\pi,\Delta^{(\infty)}}
  \subseteq
  \Omega_{\pi,\Delta^{(j+1)}}
  \subseteq
  \Omega_{\pi,\Delta^{(j)}}
  \subseteq
  \Omega_{\pi,\abs{\Delta}}
  \subseteq
  \Omega_{\pi,\Delta^{(0)}}.
\]

The \textsl{orbifold tangent bundle} \(\T_{\pi,\Delta}\) is defined to be the dual of \(\Omega_{\pi,\Delta}\),
locally generated by the elements
\[
  w_{i}^{p_{i}(1-\myfrac{1}{m_{i}})}
  \pi^{\ast}(\myfrac{\partial}{\partial z_{i}}).
\]
Clearly, for any \(j\geq1\), one has the inclusion of sheaves
\[
  \T_{\pi,\Delta^{(0)}}
  \subseteq
  \T_{\pi,\abs{\Delta}}
  \subseteq
  \T_{\pi,\Delta^{(j)}}
  \subseteq
  \T_{\pi,\Delta^{(j+1)}}
  \subseteq
  \T_{\pi,\Delta^{(\infty)}}.
\]

\subsection{Orbifold jet differentials}
We will now define orbifold jet differentials of order \(k\), that generalize orbifold symmetric differentials and coincide with these at order \(1\).

In a local trivialization as above, the coordinate system \(z_{i}\) induce jet-coordinates \(\diff[j]z_{i}\) on \(J_{k}X\) corresponding to the Taylor expansion of germs of holomorphic curves \(\C\to X\) (note that many authors use the normalization where jet-coordinates behave as derivatives but it is preferable to rather consider the normalization where these behave as Taylor coefficients).
\begin{defi}
  \label{defi:differentials}
  The sheaf of \textsl{orbifold jet differentials} of order \(k\) is the sheaf of \(\O_{Y}\)-algebras generated in local coordinates as above by the elements
  \[
    w_{i}^{-p_{i}(1-\myfrac{j}{m_{i}})^{+}}
    \pi^{\ast}(\diff[j]z_{i}),
  \]
  for \(1\leq i\leq\dim(X)\) and \(1\leq j\leq k\).
\end{defi}

Note that for a change of (centered) local adapted coordinates \(w\leftrightarrow\tilde{w}\) on \(Y\), for any \(i\) with \(m_{i}>1\), up to reordering of the coordinates, one can assume that \(D_{i}=(w_{i}=0)=(\tilde{w}_{i}=0)\).
Hence there is a fonction \(\varphi_{i}\colon\C^{n}\to\C\) with \(\varphi_{i}(0)\neq 0\) such that \(w_{i}=\tilde{w}_{i}\cdot\varphi_{i}(\pi(\tilde{w}))\) and \(z_{i}=\tilde{z}_{i}\cdot(\varphi_{i}(\tilde{z}))^{p_{i}}\).
One can then check that our definition in local coordinates indeed makes sense, since a simple computation yields
\[
  w_{i}^{-p_{i}(1-\myfrac{k}{m_{i}})^{+}}
  \pi^{\ast}(\diff[k]z_{i})
  =
  \sum_{j=0}^{k}
  \underbrace{
    \frac{\diff[k-j](\varphi_{i}^{p_{i}})\circ\pi(\tilde{w})}
    {(\varphi_{i}\circ\pi(\tilde{w}))^{p_{i}(1-k/m_{i})^{+}}}
  }_{\text{with no pole in }\pi^{\ast}J_{k}X}
  \underbrace{
    \tilde{w}_{i}^{-p_{i}(1-\myfrac{k}{m_{i}})^{+}}
    \pi^{\ast}(\diff[j]\tilde{z}_{i})
  }_{\text{pole order }\leq p_{i}(1-\myfrac{j}{m_{i}})^{+}}.
\]

The sheaf of orbifold jet differentials of order \(k\) is naturally a sheaf of graded algebras whose graded pieces are denoted \(E_{k,N}\Omega_{\pi,\Delta}\),
the sheaf of \textsl{orbifold jet differentials} of \textsl{order} \(k\) and of \textsl{weighted degree} \(N\).
Explicitely, \(E_{k,N}\Omega_{\pi,\Delta}\) is the locally free subsheaf of \(\pi^{\ast}E_{k,N}\Omega_{X}(\log\abs{\Delta})\) generated in local coordinates as above by elements
\[
  \prod_{i=1}^{\dim(X)}
  \left(
    \myfrac{\pi^{\ast}\diff[1]z_{i}}{w_{i}^{p_{i}(1-\myfrac{1}{m_{i}})^{+}}}
  \right)^{\alpha_{i,1}}
  \dotsm
  \left(
    \myfrac{\pi^{\ast}\diff[k]z_{i}}{w_{i}^{p_{i}(1-\myfrac{k}{m_{i}})^{+}}}
  \right)^{\alpha_{i,k}},
\]
such that \(\Abs{\alpha}\bydef\sum_{i,j}j\alpha_{i,j}=N\).
As an example, one has \(E_{1,N}\Omega_{\pi,\Delta}= S^{N}\Omega_{\pi,\Delta}\).

It is clear from Definition~\ref{defi:differentials} that orbifold jet differentials are logarithmic jet differentials
\(\omega\in\pi^{\ast}E_{k,N}\Omega_{X}(\log(\abs{\Delta}))\)
satisfying certain cancelations along \(D\bydef\sum_{i\in I\colon m_{i}>1}\frac{p_{i}}{m_{i}}D_{i}\), as shown by the following rewriting of the former elements
\[
  \prod_{i=1}^{\dim(X)}
  w_{i}^{\frac{p{i}}{m_{i}}(\alpha_{i,1}\min(m_{i},1)+\dotsb+\alpha_{i,k}\min(m_{i},k))}
  \pi^{\ast}(\diff z_{i}/z_{i})^{\alpha_{i,1}}
  \dotsm
  \pi^{\ast}(\diff[k]z_{i}/z_{i})^{\alpha_{i,k}}.
\]

Note that the direct image of the sheaf of \(\Aut(\pi)\)-invariant sections of \(E_{k,N}\Omega_{\pi,\Delta}\)
\[
  E_{k,N}\Omega_{X,\Delta}
  \bydef
  \pi_{\ast}((E_{k,N}\Omega_{\pi,\Delta})^{\Aut(\pi)})
  \subseteq
  E_{k,N}\Omega_{X}(\log\abs{\Delta}),
\]
which is a subsheaf of logarithmic jet differentials, does not depend on the choice of \(\pi\).
Explicitely, \(E_{k,N}\Omega_{X,\Delta}\) is the locally free subsheaf of \(E_{k,N}\Omega_{X}(\log\abs{\Delta})\) generated in local coordinates as above by elements
\[
  \prod_{i=1}^{\dim(X)}
  {z_{i}}^{\left\lceil\myfrac{\left(\alpha_{i,1}\min(m_{i},1)+\dotsb+\alpha_{i,k}\min(m_{i},k)\right)}{m_{i}}\right\rceil}
  (\diff z_{i}/z_{i})^{\alpha_{i,1}}
  \dotsm
  (\diff[k]z_{i}/z_{i})^{\alpha_{i,k}}.
\]

\subsection{Orbifold jet spaces}
Next, we define the jet spaces, which have the crucial property that every orbifold entire curve lifts to the orbifold jet spaces, in a suitable sense.
\begin{defi}
  The orbifold jet space is defined as \(J_{k}(\pi,\Delta)\bydef\Spec\bigoplus_{N} E_{k,N}\Omega_{\pi,\Delta}\).
\end{defi}
In local adapted coordinates \((w_{1},\dotsc,w_{n})\) on \(U\subseteq Y\),
\[
  J_{k}(\pi,\Delta)\rvert_{U}
  =
  U
  \times
  \Spec\Big(\C\big[w_{i}^{-p_{i}(1-\myfrac{j}{m_{i}})^{+}}\pi^{\ast}(\diff[j]z_{i})\big]\Big)
  \cong
  U \times \C^{nk}.
\]

The space \(J_{k}(\pi,\Delta)\) is the total space of a fiber bundle over \(X\), with the natural projection, but for \(k>1\) it is not a vector bundle.
It is a subsheaf of \(\pi^{\ast}J_{k}X\).
For any two integers \(k>\ell\), the restriction of the projection \(\pi^{\ast}J_{k}X\twoheadrightarrow \pi^{\ast}J_{\ell}X\) to \(J_{k}(\pi,\Delta)\) yields a natural surjective map \(J_{k}(\pi,\Delta)\twoheadrightarrow J_{\ell}(\pi,\Delta)\).
For \(k=1\), of course, \(J_{1}(\pi,\Delta)=\T_{\pi,\Delta}\) is the orbifold tangent bundle.

Let \(f\colon(\D,0)\to(X,x)\) be a germ of holomorphic curve and let \(\pi\colon Y\to X\) be an adapted covering for \((X,\Delta)\).
We can construct a Riemann surface \(V\) with a proper surjective holomorphic map \(\rho\colon V\to\D\) such that there is a holomorphic lifting \(g\colon V\to Y\) of \(f\):
\begin{equation}
  \label{eq:star}\tag{\(\star\)}
  \vcenter{\xymatrix{
      V \ar[d]^{\rho} \ar[r]^{g}
  &Y\ar[d]^{\pi}
  \\
  \D \ar[r]^{f}
  &(X,\Delta)
  }}.
\end{equation}

Let \(t\) be a coordinate on \(\D\). Then we can lift the vector field \(\partial/\partial t\) as a meromorphic vector field on \(V\), which we still denote \(\partial/\partial t\).
Then \((\partial/\partial t,\dotsc,\myfrac{1}{k!}\partial^{k}/\partial t^{k})\) is a meromorphic section of \(J_{k}(V)\) and we can consider
\(
(\pi\circ g)_{\ast}(\partial/\partial t,\dotsc,\myfrac{1}{k!}\partial^{k}/\partial t^{k})
\)
to define a meromorphic lifting \(j_{k}^{\star}(g)\colon V\dasharrow J_{k}(Y)\dasharrow\pi^{\ast}J_{k}(X)\).
In a local trivialization of \(\pi^{\ast}J_{k}(X)\) around \(\pi^{-1}(x)\):
\[
  j_{k}^{\star}(g)
  \bydef
  \left(g,f'\circ\rho,\dotsc,f^{(k)}\circ\rho\right).
\]
Hence \(j_{k}^{\star}(g)\colon V\to\pi^{\ast}J_{k}(X)\) is actually holomorphic.

Recall that a holomorphic curve \(f\colon\D\to X\) is termed \textsl{orbifold} for the pair \((X,\Delta)\) if \(f(\D)\not\subseteq\abs{\Delta}\) and if for \(t\in\D\) such that \(f(t)\in\Delta_{i}\), \(\mult_{t}(f^{\ast}\Delta_{i})\geq m_{i}\).
\begin{prop}
  \label{prop:orbifold.curve}
  \(f\colon(\D,0)\to(X,x)\) be a germ of holomorphic curve.
  The following statements are equivalent.
  \begin{enumerate}
    \item\label{def}
      The curve \(f\) is orbifold for the pair \((X,\Delta)\).
    \item\label{noguchi}
      For any (for one) commutative diagram \eqref{eq:star}
      and for any orbifold form \(\omega\in H^{0}(U,\Omega_{\pi,\Delta})\)
      on \(U \supset g(V)\),
      the meromorphic function
      \(
      (g^{\ast}\omega/\rho^{\ast}\diff t)
      \)
      is holomorphic.
    \item\label{jets}
      For any (for one) commutative diagram \eqref{eq:star}, one has for any (for some) \(k\geq 1\)
      \[
        j_{k}^{\star}(g)\in J_{k}(\pi,\Delta).
      \]
  \end{enumerate}
\end{prop}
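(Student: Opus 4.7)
The plan is to work entirely locally, in an adapted chart of \(Y\) centered at a point of \(\pi^{-1}(x)\): coordinates \((w_{1},\dotsc,w_{n})\) on \(Y\) and \((z_{1},\dotsc,z_{n})\) on \(X\) with \(\pi(w) = (w_{1}^{p_{1}},\dotsc,w_{n}^{p_{n}})\) and \(\Delta_{i} = \{z_{i}=0\}\) for each \(i\) with \(m_{i}>1\).  After shrinking \(\D\) around \(0\), one may take \(V\) to be a small disk and \(\rho(v)=v^{e}\) for a sufficiently divisible integer \(e\), so that each component \(f_{i}\circ\rho\) admits a holomorphic \(p_{i}\)-th root \(g_{i}\) on \(V\).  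Writing \(f_{i}(t) = c_{i}\,t^{\mu_{i}}+O(t^{\mu_{i}+1})\) with \(\mu_{i}\bydef\mult_{0}f_{i}\) and \(c_{i}\neq 0\), condition~(1) reads exactly \(\mu_{i}\geq m_{i}\) whenever \(\mu_{i}\geq 1\), and \(g_{i}(v) = c_{i}^{1/p_{i}}v^{e\mu_{i}/p_{i}}+\dotsb\).

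For the equivalence (1)\(\Leftrightarrow\)(2), it suffices to test the local generators \(\omega_{i}=w_{i}^{-p_{i}(1-1/m_{i})}\pi^{\ast}(\diff z_{i})\) of \(\Omega_{\pi,\Delta}\).  Using \(\pi^{\ast}(\diff z_{i}) = p_{i}\,w_{i}^{p_{i}-1}\,\diff w_{i}\) and \(g_{i}^{p_{i}} = f_{i}\circ\rho\), a direct computation yields
\[
  \ord_{0}\bigl(g^{\ast}\omega_{i}/\rho^{\ast}\diff t\bigr) \;=\; e\,(\mu_{i}-m_{i})/m_{i}
\]
when \(\mu_{i}\geq 1\) (with nonvanishing leading coefficient), and the expression is manifestly holomorphic when \(\mu_{i}=0\).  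Holomorphicity on the generators hence holds iff \(\mu_{i}=0\) or \(\mu_{i}\geq m_{i}\) for each \(i\); by \(\O_{U}\)-linearity this extends to arbitrary \(\omega\in H^{0}(U,\Omega_{\pi,\Delta})\), and because a failure of (1) produces a genuine pole, the conclusion holds \emph{for any} diagram~\eqref{eq:star} as soon as it holds \emph{for one}.

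For (1)\(\Leftrightarrow\)(3), the local trivialization of \(J_{k}(\pi,\Delta)\) shows that \(j_{k}^{\star}(g)\in J_{k}(\pi,\Delta)\) amounts to the holomorphicity on \(V\) of each function \(g_{i}(v)^{-p_{i}(1-j/m_{i})^{+}}\cdot(T_{j}f_{i})(\rho(v))\), where \(T_{j}f_{i}\) denotes the \(j\)-th Taylor coefficient function of \(f_{i}\).  For \(j\geq m_{i}\) the exponent vanishes and the statement is trivial; for \(1\leq j< m_{i}\) with \(\mu_{i}\geq j\), a short computation gives
\[
  \ord_{0}\bigl(g_{i}^{-p_{i}(1-j/m_{i})}\cdot(T_{j}f_{i})(\rho)\bigr) \;=\; e\,j\,(\mu_{i}-m_{i})/m_{i}
\]
(again with nonvanishing leading coefficient), while the case \(\mu_{i}<j<m_{i}\) is incompatible with (1).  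Testing already with \(k=1\) distinguishes \(\mu_{i}<m_{i}\) from \(\mu_{i}\geq m_{i}\), so (3) \emph{for some} \(k\) and \emph{some} lift recovers (1), whereas (1) implies (3) \emph{for any} \(k\) and \emph{any} lift.

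The main work is bookkeeping: tracking orders of vanishing, splitting on the cases \(\mu_{i}=0\), \(1\leq\mu_{i}<m_{i}\), and \(\mu_{i}\geq m_{i}\), and arranging enough divisibility of \(e\) for holomorphic \(p_{i}\)-th roots to exist on \(V\).  The unlocking structural observation is that the adapted coordinates simultaneously diagonalize \(\pi\) and the support of \(\Delta\), reducing the problem to \(n\) independent one-variable computations; and since (1) makes no reference to \((V,\rho,g)\), all equivalences automatically upgrade from ``for some'' to ``for any'' diagram \eqref{eq:star}.
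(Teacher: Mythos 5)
Your proposal is correct and follows essentially the same strategy as the paper: reduce to a local adapted chart where \(\pi\) is a monomial cover, decompose coordinate‑by‑coordinate, and compare vanishing orders. The one notable difference is in the routing: you prove \((1)\Leftrightarrow(3)\) directly via the explicit order count \(e\,j\,(\mu_{i}-m_{i})/m_{i}\), whereas the paper proves \((2)\Leftrightarrow(3)\) by an algebraic identity expressing \(\omega_{j}(j_{k}^{\star}(g))\) as (a unit ratio) times \(\big(g^{\ast}\omega_{1}/\rho^{\ast}\diff t\big)^{j}\), which sidesteps computing the order of the \(j\)-th derivative. Both are correct; your version is more of a direct bookkeeping argument, the paper's is slightly more structural. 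One cosmetic remark: when you invoke the upgrade from ``for one'' to ``for any'' diagram~\eqref{eq:star}, it is worth stating explicitly that for a general local \(\rho\) of ramification order \(r\) the same computation gives order \(r\,(\mu_{i}-m_{i})/m_{i}\), so that \((1)\Rightarrow(2)\) and \((1)\Rightarrow(3)\) hold uniformly; as written, your sentence only addresses the converse implication.
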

\begin{proof}
  The problem being local, we can reduce to the following situation
  \[
    \xymatrix{
      u\in\D
      \ar[d]^{\rho}
      \ar[r]^{g}
  &
  \D\ni w
  \ar[d]^{\pi}
  \\
  t\in\D
  \ar[r]^{f}
  &
  \D\ni z
},
  \]
  where
  \(f(t)=t^{\alpha}\varphi(t)\)
  with \(\varphi(0)\neq 0\) ,
  \(g(u)=u^{\beta}\psi(u)\) with
  \(\psi(0)\neq 0\),
  and
  \(\rho(u)=u^{r}\),
  \(\pi(w)=w^{p}\).
  In particular, we have \(\alpha r=\beta p\).

  A section \(\omega\) of \(\Omega_{\pi,\Delta}\) is locally of the form
  \[
    \omega
    =
    w^{-p(1-1/m)}\pi^{\ast}\diff z
    =
    pw^{(p/m)-1}\diff w,
  \]
  with \(1\leq m\leq\infty\).
  One infers that \(g^{\ast}\omega\) vanishes at order \(\beta((p/m)-1)+(\beta-1)=(\alpha/m)r-1\).
  Therefore \(g^{\ast}\omega/\rho^{\ast}\diff t\) is holomorphic if and only if \(\alpha\geq m\).
  This proves the equivalence of \eqref{def} and \eqref{noguchi}.

  Now we prove the equivalence between \eqref{noguchi} and \eqref{jets} for a fixed diagram \eqref{eq:star}.
  Recall that by definition, \(j_{k}^{\star}(g)\) belongs to \(J_{k}(\pi,\Delta)\) if and only if \(\omega(j_{k}^{\star}(g))\) is holomorphic for all jet differentials \(\omega\).
  Such a jet differential being locally of the form
  \[
    \sum
    a_{\alpha}
    \left(
      \myfrac{\pi^{\ast}\diff[1]z}{w^{p(1-1/m)^{+}}}
    \right)^{\alpha_{1}}
    \dotsm
    \left(
      \myfrac{\pi^{\ast}\diff[k]z}{w^{p(1-k/m)^{+}}}
    \right)^{\alpha_{k}},
  \]
  it is necessary and sufficient to check the holomorphicity of
  \[
    \omega_{j}(j_{k}^{\star}(g))
    =
    \left(
      \myfrac{\pi^{\ast}\diff[j]z}{w^{p(1-j/m)^{+}}}
    \right)
    (g,f'\circ\rho,\dotsc,f^{(k)}\circ\rho).
  \]

  When \(m=\infty\), one has a standard logarithmic derivative:
  \[
    \omega_{j}(j_{k}^{\star}(g))
    =
    \frac{f^{(j)}\circ\rho}{g^{p}}
    =
    \frac{f^{(j)}}{f}\circ\rho.
  \]
  It coincides with \(g^{\ast}\omega_{1}/\rho^{\ast}\diff t\) for \(j=1\).
  The vanishing order \(r((\alpha-j)-\alpha)\) is indeed non negative if and only if \(\alpha=\infty\).

  When \(m\) is finite, if \(j\geq m\), there is nothing to check.
  Else, a straightforward computation shows that:
  \[
    \omega_{j}(j_{k}^{\star}(g))
    =
    \frac{f^{(j)}\circ\rho}{g^{p(1-j/m)}}
    =
    \frac{p!}{j!(p-j)!p^{j}}
    \cdot
    \left(
      \frac{(\rho')^{j}f^{(j)}\circ\rho}{(g')^{j} \pi^{(j)}\circ g}
    \right)
    \cdot
    \left(\frac{g^{\ast}\omega_{1}}{\rho^{\ast}\diff t}\right)^{j},
  \]
  (note that \(j\leq m<p\)).
  In particular, for \(j=1\),
  by commutativity of \eqref{eq:star} one has
  \[
    \omega_{1}(j_{k}^{\star}(g))
    =
    g^{\ast}\omega_{1}/\rho^{\ast}\diff t.
  \]
  More generally, since
  \(
  (\rho')^{j}f^{(j)}\circ\rho
  \)
  and
  \(
  (g')^{j} \pi^{(j)}\circ g
  \)
  appear both in the development of the \(j\)th derivative of \(f\circ\rho=\pi\circ g\), these have the same vanishing order.
  Therefore
  \(
  \omega_{j}(j_{k}^{\star}(g))
  \)
  is holomorphic if and only if
  \(
  (g^{\ast}\omega_{1}/\rho^{\ast}\diff t)
  \)
  is holomorphic.
\end{proof}

Note that conversely, any point of \(J_{k}(\pi,\Delta)\) can be obtained as
\(j_{k}^{\star}(g)\) for some diagram \eqref{eq:star}.
Hence, we record the following natural fact, for completeness.
\begin{prop}[(Differentials of orbifold morphisms)]
  Let \(\varphi\colon (X,\Delta)\to(X',\Delta')\) be an \textsl{orbifold morphism} (see~\cite{Ca11}).
  Then for any commutative diagram
  \[
    \xymatrix{
      Y \ar[d]^{\pi} \ar[r]^{\tilde{\varphi}}&
      Y'\ar[d]^{\pi'}\\
      (X,\Delta) \ar[r]^{\varphi}&
      (X',\Delta')
    },
  \]
  where the vertical maps are adapted coverings,
  there is a canonical map
  \(
  \varphi_{\ast}\colon J_{k}(\pi,\Delta)\to J_{k}(\pi',\Delta')
  \),
  coinciding with the (lift of the) \(k\)th differential of \(\varphi\) outside of \(\Delta\).
  At a point corresponding to the \(k\)th jet of a diagram
  \[
    \xymatrix{
      V \ar[d]^{\rho} \ar[r]^{g}&
      Y\ar[d]^{\pi}\\
      \D \ar[r]^{f}&
      (X,\Delta)
    },
  \]
  it is locally given by
  \[
    \left(g,f'\circ\rho,\dotsc,f^{(k)}\circ\rho\right)
    \mapsto
    \left(\tilde{\varphi}\circ g,(\varphi\circ f)'\circ\rho,\dotsc,(\varphi\circ f)^{(k)}\circ\rho\right).
  \]
\end{prop}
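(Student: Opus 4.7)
The plan is to reduce the proposition to Proposition~\ref{prop:orbifold.curve}, which characterizes \(J_{k}(\pi,\Delta)\) as the locus of \(k\)-jets of orbifold curves. The remark preceding the statement asserts that every closed point of \(J_{k}(\pi,\Delta)\) is of the form \(j_{k}^{\star}(g)\) for some diagram~\eqref{eq:star}. Using this, I would first define \(\varphi_{\ast}\) on closed points by the stated formula, and then verify \textup{(i)} that the image actually lies in \(J_{k}(\pi',\Delta')\) and \textup{(ii)} that the resulting map is algebraic.

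For \textup{(i)}: given a diagram~\eqref{eq:star} associated to a point of \(J_{k}(\pi,\Delta)\), post-composing with \(\varphi\) and \(\tilde{\varphi}\) produces a new commutative diagram of the same shape, with \((X,\Delta)\), \(Y\), \(f\), \(g\) replaced by \((X',\Delta')\), \(Y'\), \(\varphi\circ f\), \(\tilde{\varphi}\circ g\). The defining property of an orbifold morphism~\cite{Ca11} is precisely that post-composition with \(\varphi\) sends orbifold curves for \((X,\Delta)\) to orbifold curves for \((X',\Delta')\); concretely, at each \(t\) with \((\varphi\circ f)(t)\in\Delta'_{j}\), the ramification of \(\varphi\) along the components of \(\Delta\) that dominate \(\Delta'_{j}\), combined with the orbifold inequalities \(\mult_{t}(f^{\ast}\Delta_{i})\geq m_{i}\), forces \(\mult_{t}((\varphi\circ f)^{\ast}\Delta'_{j})\geq m'_{j}\). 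Proposition~\ref{prop:orbifold.curve} applied at the target then yields \(j_{k}^{\star}(\tilde{\varphi}\circ g)\in J_{k}(\pi',\Delta')\), which is the content of the claimed local formula.

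For \textup{(ii)}: outside the inverse image of \(\abs{\Delta}\), the orbifold jet space \(J_{k}(\pi,\Delta)\) coincides with \(\pi^{\ast}J_{k}(X)\), and the set-theoretic map built above agrees there with the ordinary \(k\)th differential \(J_{k}(\varphi)\) pulled back via \(\pi\), hence is already algebraic on this open set. To extend across the boundary, I would write \(\tilde{\varphi}\) in local adapted coordinates and check, by the same telescoping computation that appears in the change-of-coordinates verification following Definition~\ref{defi:differentials}, that \(\tilde{\varphi}^{\ast}\) sends the local generators of \(E_{k,N}\Omega_{\pi',\Delta'}\) to regular combinations of the local generators of \(E_{k,N}\Omega_{\pi,\Delta}\): the orbifold morphism condition precisely guarantees that the pole orders contributed by the source denominators \(w_{i}^{p_{i}(1-j/m_{i})^{+}}\) absorb the zeros forced on the pulled-back forms by the ramification of \(\tilde{\varphi}\).

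The principal obstacle is this last step. Set-theoretically, \textup{(i)} is essentially immediate once Proposition~\ref{prop:orbifold.curve} is in hand, but upgrading to an algebraic morphism requires careful bookkeeping of three families of exponents—the ramifications \(p_{i}\) and \(p'_{j}\) of the coverings, the orbifold multiplicities \(m_{i}\) and \(m'_{j}\), and the multiplicities with which \(\varphi\) pulls the components of \(\Delta'\) back to \(\Delta\). The computation is routine but notationally heavy; alternatively, I expect the dual reformulation—that \(\tilde{\varphi}^{\ast}\) defines a morphism \(E_{k,N}\Omega_{\pi',\Delta'}\to\tilde{\varphi}_{\ast}E_{k,N}\Omega_{\pi,\Delta}\) of graded algebras of orbifold jet differentials—to be the most streamlined way to record it.
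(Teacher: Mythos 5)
Your part (i) is exactly the paper's proof, which consists of the single sentence ``The morphism \(\varphi\circ f\) is an orbifold entire curve,'' with the appeal to Proposition~\ref{prop:orbifold.curve} left implicit. The algebraicity issue you raise in part (ii) is not addressed in the paper at all, so your attention to it is appropriate, though like the paper you ultimately defer the coordinate bookkeeping rather than carrying it out.
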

\begin{proof}
  The morphism \(\varphi\circ f\colon\D\to(X',\Delta')\) is orbifold.
\end{proof}
\begin{rema}
  \label{rema:pullback}
  This allows one to define the pullback of orbifold jet differentials by orbifold morphisms, in the obvious way.
\end{rema}

Proposition~\ref{prop:orbifold.curve} allows one to evaluate jet differentials on orbifold curves, or on their holomorphic liftings, as follows.
\begin{defi}
  \label{defi:g*P}
  Let \((X,\Delta)\) be a smooth orbifold pair and let \(\pi\colon Y\to X\) be an adapted covering. For a holomorphic lifting \(g\) of an orbifold entire curve as in \eqref{eq:star}, and a global orbifold jet differential \(P\in H^{0}(Y,E_{k,N}\Omega_{\pi,\Delta})\), we denote by \(g^{\ast}P\) the holomorphic function
  \[
    g^{\ast}P
    \bydef
    P(j_{k}^{\star}(g))
    \colon
    V
    \to
    \C.
  \]
\end{defi}
\begin{rema}
  \label{rema:f*P}
  Note that if \(f\colon\C\to(X,\Delta)\) is an orbifold entire curve and if \(P\in H^{0}(X,E_{k,N}\Omega_{X,\Delta})\) is a global orbifold jet differential defined on \(X\), for any diagram \eqref{eq:star}, the function \(g^{\ast}(\pi^{\ast}P)\) is constant in the fibers of \(\rho\).
  We hence get a holomorphic function \(f^{\ast}P\colon\C\to\C\), that moreover does not depend on the diagram \eqref{eq:star}.
  It is of course nothing but \(f^{\ast}P=P(j_{k}(f))\).
\end{rema}
\begin{rema}
  Beware that, as an example, we will from now on denote plainly by \(g^{\ast}\omega\) the function that was until now denoted by \((g^{\ast}\omega/\rho^{\ast}\diff t)\).
\end{rema}
\begin{rema}
  \label{rema:corr}
  Working with rational orbifold multiplicities, it is natural to work in the slightly larger category of \textsl{orbifold correspondences}, defined below, to still define orbifold jet differentials as logarithmic jet differentials acting holomorphically on ``orbifold entire curves''.
\end{rema}
\begin{defi}
  \label{defi:corr}
  An \textsl{orbifold correspondence} \(f\colon \C\rightrightarrows(X,\Delta)\) is an orbifold diagram:
  \[
    \vcenter{\xymatrix{
        V \ar[d]_{\rho} \ar[r]^{g}
  &(X,\Delta')\ar[d]^{\mathrm{id}}
  \\
  (\C,\Delta_{\rho})
  &(X,\Delta)
  }},
\]
such that
\(
g^{\ast}(\Delta'-\Delta)\geq\rho^{\ast}\Delta_{\rho}
\),
where \((\C,\Delta_{\rho})\) is the orbifold base of a covering \(\rho\) of \(\C\).
\end{defi}

\begin{exem}
  Taking \(\Delta'=\Delta\), one recovers the familiar orbifold entire curves.
\end{exem}
\begin{exem}
  Taking \(\Delta=(1-\myfrac{b}{a})\Delta_{1}\subseteq\P^{1}\) and \(\Delta'=(1-\myfrac{1}{a})\Delta_{1}\), one recovers the ``multivalued function'' \(f_{1}\colon t\mapsto t^{\myfrac{a}{b}}\) by taking \(\rho\) the cyclic cover of order \(b\) and \(g_{1}\colon t\mapsto t^{a}\).
  Indeed
  \((\Delta'-\Delta)=\frac{1}{a}(b-1)\) so \(g^{\ast}(\Delta'-\Delta)=(b-1)=\rho^{\ast}\Delta_{\rho}\).
\end{exem}
\begin{exem}
  More generally, for rationals \(\myfrac{a'}{b'}\geq \myfrac{a}{b}\), one recovers the ``multivalued functions'' \(f_{1}\colon t\mapsto t^{\myfrac{a'}{b'}}\) by taking
  \(\Delta'=(1-\myfrac{b}{ab'})\Delta_{1}\), \(\rho\) the cyclic cover of order \(b'\) and \(g_{1}\colon t\mapsto t^{a'}\).
  Indeed
  \((\Delta'-\Delta)=\frac{b}{ab'}(b'-1)\) so \(g^{\ast}(\Delta'-\Delta)\geq(b'-1)=\rho^{\ast}\Delta_{\rho}\).
\end{exem}

Diagram~\eqref{eq:star} and Definition~\ref{defi:corr} being really in the same spirit, all the work of this paper could be extended to the category of orbifold correspondences.
\subsection{Filtration of jet differential bundles}
For each \(q=1,\dotsc,k\), one can define a weighted degree \(\Abs{\cdot}_{q}\) on
\((\Z_{\geq0})^{n\times k}\) by
\[
  \Abs{(\alpha_{i,j})}_{q}
  \bydef
  \sum_{j=1}^{q}
  \sum_{i=1}^{n}
  j\alpha_{i,j}.
\]
For \(q=k\), it corresponds to the usual weighted degree \(\Abs{\cdot}\).
It induces a weighted degree on meromorphic sections of \(\pi^{\ast}E_{k,N}\Omega_{X}\)
using the formula
\[
  \Abs*{
    \tsum_{\Abs{\alpha}=N}
    u_{\alpha}(w)
    \tprod_{i,j}
    \big({\pi^{\ast}z_{i}^{(j)}}\big)^{\alpha_{i,j}}
  }_{q}
  \bydef
  \min(
  \Abs{\alpha}_{q}
  \colon
  u_{\alpha}\not\equiv0
  ).
\]

\begin{prop}[(Green--Griffiths filtration)]
  \label{prop:filtration}
  There is a natural filtration of \(E_{k,N}\Omega_{\pi,\Delta}\) induced by the weighted degrees \(\Abs{\cdot}_{k-1}\dotsc,\Abs{\cdot}_{1}\), with associated graded bundle
  \[
    \mathrm{Gr}_{\bullet}E_{k,N}\Omega_{\pi,\Delta}
    =
    \bigoplus_{\ell\in(\Z_{\geq0})^{k}\colon\Abs{\ell}=N}
    S^{\ell_{1}}\Omega_{\pi,\Delta^{(1)}}
    \otimes
    S^{\ell_{2}}\Omega_{\pi,\Delta^{(2)}}
    \otimes
    \dotsb
    \otimes
    S^{\ell_{k}}\Omega_{\pi,\Delta^{(k)}}.
  \]
\end{prop}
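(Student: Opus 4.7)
The plan is to mimic the classical Green--Griffiths construction by introducing a nested system of filtrations indexed by the weighted degrees $\Abs{\cdot}_{k-1},\dotsc,\Abs{\cdot}_{1}$, and to identify the successive graded pieces using the local transformation law for the jet generators computed just after Definition~\ref{defi:differentials}.

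For each $q \in \Set{1,\dotsc,k-1}$, I would define the decreasing filtration
\[
F^{\geq p}_{q} E_{k,N}\Omega_{\pi,\Delta}
\bydef
\{\omega\colon \Abs{\omega}_{q}\geq p\},
\]
where $\Abs{\omega}_{q}$ is the minimum of $\Abs{\alpha}_{q}$ over the multi-indices $\alpha$ appearing with nonzero coefficient in the expansion of $\omega$ in the basis of Definition~\ref{defi:differentials}. The first key point is that this filtration is intrinsic: under a change of local adapted coordinates $w\leftrightarrow\tilde{w}$, the transformation formula of the paper expresses $w_{i}^{-p_{i}(1-k/m_{i})^{+}}\pi^{\ast}(\diff[k]z_{i})$ as the sum of a single leading term of the expected shape plus correction terms involving only derivatives of order $<k$. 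These corrections trade one $k$-th derivative (contributing $0$ to $\Abs{\cdot}_{k-1}$) against a product of lower derivatives of total weight $k$ (strictly positive contribution to $\Abs{\cdot}_{k-1}$), hence lie in a strictly deeper step of the filtration. The analogous statement holds at each intermediate order $q<k$.

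Next, I would identify the graded pieces by iteration. Filtering first by $\Abs{\cdot}_{k-1}$, the class of a monomial with top-order degree $\ell_{k}=\sum_{i}\alpha_{i,k}$ lies in the step $\Abs{\cdot}_{k-1}=N-k\ell_{k}$, and modulo deeper terms the $k$-th derivative factors transform exactly as the generators $w_{i}^{-p_{i}(1-k/m_{i})^{+}}\pi^{\ast}(\diff z_{i})$ of $\Omega_{\pi,\Delta^{(k)}}$. This realises the $\ell_{k}$-graded piece as $S^{\ell_{k}}\Omega_{\pi,\Delta^{(k)}}\otimes E_{k-1,N-k\ell_{k}}\Omega_{\pi,\Delta}$. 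Iterating on the remaining factor $E_{k-1,\bullet}\Omega_{\pi,\Delta}$ with $\Abs{\cdot}_{k-2}$, and so on down to $\Abs{\cdot}_{1}$ which isolates $S^{\ell_{1}}\Omega_{\pi,\Delta^{(1)}}=S^{\ell_{1}}\Omega_{\pi,\Delta}$, yields by induction on $k$ the claimed identification with $\bigoplus_{\Abs{\ell}=N}\bigotimes_{j=1}^{k} S^{\ell_{j}}\Omega_{\pi,\Delta^{(j)}}$.

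The main obstacle is the precise bookkeeping of the Faà di Bruno-type expansion of $\diff[j] z_{i}$ under orbifold-adapted coordinate changes, combined with the fractional weights $w_{i}^{-p_{i}(1-j/m_{i})^{+}}$, in order to verify that the leading symbol truly transforms like the generator of $\Omega_{\pi,\Delta^{(j)}}$ and that \emph{all} correction terms sit strictly deeper in the relevant filtration step. The key cancellation is already encapsulated in the displayed change-of-coordinates computation of the paper, but one must additionally track the multi-degree grading to promote this to a coordinate-independent splitting of the associated graded.
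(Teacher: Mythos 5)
Your proposal takes essentially the same route as the paper: the paper also defines the descending filtrations $F_{k-1}^{w}\bydef\{\omega\colon\Abs{\omega}_{k-1}\geq w\}$, argues coordinate-independence from the upper-triangularity of the (orbifold-adapted) Faà di Bruno expansion, identifies the top graded piece as $E_{k-1,w}\Omega_{\pi,\Delta}\otimes S^{\ell_{k}}\Omega_{\pi,\Delta^{(k)}}$ via the observation that modulo lower-order jet coordinates a $k$-th derivative transforms like a $1$-form, and then iterates — there formalized as a downward induction on $p$ with intermediate graded bundles $\mathrm{Gr}_{\bullet}^{p}$. Both arguments record the same key subtlety, namely that the pole bound $p_{i}(1-k/m_{i})^{+}$ for the $k$-th jet coordinate matches that of a $1$-form for the modified pair $(X,\Delta^{(k)})$ rather than $(X,\Delta)$, which is what produces the $\Delta^{(j)}$'s in the graded pieces.
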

\begin{proof}
  We proceed by induction on the length of tensor products in the summand:
  we will prove that there is a natural filtration of \(E_{k,N}\Omega_{\pi,\Delta}\) induced by the weighted degrees \(\Abs{\cdot}_{k-1}\dotsc,\Abs{\cdot}_{p}\), with associated graded bundle
  \[
    \mathrm{Gr}_{\bullet}^{p}E_{k,N}\Omega_{\pi,\Delta}
    =
    \bigoplus_{p\ell_{p}+\dotsb+k\ell_{k}\leq N}
    E_{p,N-p\ell_{p}-\dotsb-k\ell_{k}}\Omega_{\pi,\Delta}
    \otimes
    S^{\ell_{p}}\Omega_{\pi,\Delta^{(p)}}
    \otimes
    \dotsb
    \otimes
    S^{\ell_{k}}\Omega_{\pi,\Delta^{(k)}}.
  \]
  Since \(E_{1,\ell}\Omega_{\pi,\Delta}=S^{\ell}\Omega_{\pi,\Delta}\), the sought statement corresponds indeed to the case \(p=1\).

  The weighted degree \(\Abs{\cdot}_{k-1}\) induces a descending filtration by subbundles
  \[
    E_{k-1,N}\Omega_{\pi,\Delta}\cong F_{k-1}^{N}\subset\dotsb\subset F_{k-1}^{w+1}\subset F_{k-1}^{w}\subset\dotsb\subset F_{k-1}^{0}=E_{k,N}\Omega_{\pi,\Delta}
  \]
  of \(E_{k,N}\Omega_{\pi,\Delta}\), with
  \[
    F_{k-1}^{w}
    \bydef
    \Set*{
      \omega\in E_{k,N}\Omega_{\pi,\Delta}
      \suchthat
      \Abs{\omega}_{k-1}\geq w
    }.
  \]
  Note that these are indeed subbundles because in a coordinate change, the weighted degree \(\Abs{\cdot}_{k-1}\) can only increase.
  This is an easy corollary of the upper-triangularity of the Faà di Bruno formula.

  We claim that the graded pieces are
  \[
    F_{k-1}^{w}/F_{k-1}^{w+1}
    \cong
    \begin{cases}
      0
    &\text{if }k\nmid N-w,\\
    E_{k-1,w}\Omega_{\pi,\Delta}\otimes S^{\ell_{k}}\Omega_{\pi,\Delta^{(k)}}
    &\text{if }w=N-k\ell_{k},
  \end{cases}
  \]
  and the announced result follows.
  The proof of the claim is standard.
  Let us simply point out that it relies on the simple observation that if one mods out by jet-coordinates of order less than \(k\),
  \(\diff[k](\phi\circ z)=(\phi'\circ z)\cdot\diff[k]z\);
  hence, in the filtration, polynomials in jet-coordinates of order \(k\) behave under coordinates changes \(\phi\) in the exact same way as symmetric differential forms (\ie{} polynomials in jet-coordinates of order \(1\)) do.
  Here the slight subtelty is that we consider orbifold jet differentials: the pole order of a \(k\)th jet coordinate
  \(w_{i}^{-p_{i}(1-k/m_{i})^{+}}\pi^{\ast}\diff[k]z_{i}\)
  for the pair \((X,\Delta)\) is not the same as the pole order of the \(1\)st jet-coordinate
  \(w_{i}^{-p_{i}(1-1/m_{i})}\pi^{\ast}\diff z_{i}\)
  for the pair \((X,\Delta)\) but rather the same as the pole order of the \(1\)st jet-coordinate
  \(w_{i}^{-p_{i}(1-k/m_{i})^{+}}\pi^{\ast}\diff z_{i}\)
  for the pair \((X,\Delta^{(k)})\) (\textit{cf.} Definition~\ref{defi:differentials}).
\end{proof}
\begin{rema}
  Notice that for \(k\gg1\), one has \(\Delta^{(k)}=\Delta^{(\infty)}\), the logarithmic part of \(\Delta\).
\end{rema}

\subsection{Euler characteristic of the Green--Griffiths bundle}
Following Green--Griffiths, we now use the graduation obtained in Proposition~\ref{prop:filtration} and the Riemann--Roch formula to compute the Euler characteristic of \(E_{k,N}\Omega_{\pi,\Delta}\).
\begin{prop}
  \label{prop:RR}
  The Euler characteristic of \(E_{k,N}\Omega_{\pi,\Delta}\) has the asymptotic expansion in \(N\), for fixed \(k\):
  \[
    \chi\bigl(E_{k,N}\Omega_{\pi,\Delta}\bigr)
    =
    \frac{N^{(k+1)n-1}}{(k!)^{n}((k+1)n-1)!}
    \chi_{k}(\pi,\Delta)
    +O\bigl(N^{(k+1)n-2}\bigr),
  \]
  where
  \[
    \chi_{k}(\pi,\Delta)
    \bydef
    (-1)^{n}
    \!\!\sum_{q\in\mathbb{N}^{k}\colon\abs{q}=n}\!\!
    \frac{s_{q_{1}}(\Omega_{\pi,\Delta^{(1)}})}{1^{q_{1}}}
    \dotsm
    \frac{s_{q_{k}}(\Omega_{\pi,\Delta^{(k)}})}{k^{q_{k}}}.
  \]
\end{prop}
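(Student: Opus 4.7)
The plan is to apply the Green--Griffiths filtration from Proposition~\ref{prop:filtration}, compute each graded piece by Hirzebruch--Riemann--Roch on an iterated projective bundle, and then sum over the indexing multi-indices via a Tauberian argument.

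Since the Euler characteristic is additive on short exact sequences, iterating over the filtration of Proposition~\ref{prop:filtration} gives exactly
\[
  \chi\bigl(E_{k,N}\Omega_{\pi,\Delta}\bigr)
  =
  \sum_{\ell\in(\Z_{\geq 0})^k,\,\abs{\ell}=N}
  \chi\bigl(S^{\ell_1}\Omega_{\pi,\Delta^{(1)}}\otimes\dotsb\otimes S^{\ell_k}\Omega_{\pi,\Delta^{(k)}}\bigr),
\]
which reduces the problem to the asymptotic evaluation of each summand.

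For each summand, I would realize the tensor product as a pushforward from the iterated projective bundle $P\bydef\mathbb{P}(\Omega_{\pi,\Delta^{(1)}})\times_Y\dotsb\times_Y\mathbb{P}(\Omega_{\pi,\Delta^{(k)}})$ (of dimension $(k+1)n-k$), with $h_j$ the hyperplane class of the $j$-th factor. HRR yields $\chi(\bigotimes_j S^{\ell_j}\Omega_{\pi,\Delta^{(j)}}) = \int_P \prod_j e^{\ell_j h_j}\cdot\mathrm{td}(P)$, and only the top-degree monomials $\prod_j(\ell_j h_j)^{a_j}/a_j!$ with $\sum_j a_j=(k+1)n-k$ contribute to the leading order in the $\ell_j$'s. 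Writing $a_j=n-1+q_j$ with $q_j\geq 0$ and $\sum_j q_j=n$, and applying the projective-bundle Segre formula $\pi_{j*}h_j^{n-1+q_j}=s_{q_j}(\Omega_{\pi,\Delta^{(j)}}^{\vee})=(-1)^{q_j}\,s_{q_j}(\Omega_{\pi,\Delta^{(j)}})$, the pushforward to $Y$ gives
\[
  \chi\Bigl(\bigotimes_j S^{\ell_j}\Omega_{\pi,\Delta^{(j)}}\Bigr)
  =
  (-1)^n\!\!\sum_{q\in\mathbb{N}^k,\,\abs{q}=n}\!\!
  \prod_j\frac{\ell_j^{n-1+q_j}}{(n-1+q_j)!}\,s_{q_j}(\Omega_{\pi,\Delta^{(j)}})
  +\text{lower order in }\ell,
\]
the global sign $(-1)^n$ arising from $(-1)^{\sum_j q_j}=(-1)^n$.

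Finally, I would evaluate the outer sum by a Tauberian argument. The generating-function asymptotic
\[
  \prod_{j=1}^k \sum_{\ell_j\geq 0}\ell_j^{a_j}x^{j\ell_j}
  \sim
  \prod_{j=1}^k \frac{a_j!}{j^{a_j+1}(1-x)^{a_j+1}}
  \quad\text{as }x\to 1^-
\]
yields
\[
  \sum_{\abs{\ell}=N}\prod_j \ell_j^{a_j}
  \sim
  \frac{\prod_j a_j!}{\prod_j j^{a_j+1}}\cdot\frac{N^{\sum_j(a_j+1)-1}}{(\sum_j(a_j+1)-1)!}.
\]
Plugging in $a_j=n-1+q_j$, using $\sum_j(a_j+1)=(k+1)n$ and $\prod_j j^{n+q_j}=(k!)^n\prod_j j^{q_j}$, the factorials $(n-1+q_j)!$ cancel with those from the HRR computation and the announced formula drops out. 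The main obstacle is the bookkeeping of the combinatorial coefficients and of the sign from the Chern/Segre duality, together with the verification that the subleading contributions from $\mathrm{td}(P)$, from monomials with some $a_j<n-1$, and from the Tauberian error terms are all genuinely of lower order in $N$. The argument is a direct multidimensional generalization of the classical Green--Griffiths Riemann--Roch computation, which corresponds to the case $k=1$.
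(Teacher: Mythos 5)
Your proposal is correct, and it reaches the stated formula by a route that is genuinely different in its technical core from the paper's, even though both start from the same Green--Griffiths filtration (Proposition~\ref{prop:filtration}) and use additivity (of $\chi$ in your case, of $\mathrm{ch}$ in the paper's). The paper's proof, following Green--Griffiths~\cite{GG80} closely, computes the Chern character of the graded bundle via Chern roots $\lambda^{(i)}_j$, converts the weighted sum over $\Abs{\ell}=N$ into an integral over a simplex via a ``sum-integral'' approximation, expands the exponential, rescales, computes the simplex integrals exactly, collects terms into complete homogeneous symmetric polynomials $h_q(\lambda^{(i)})$, and finally identifies $(-1)^q h_q = s_q$. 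You instead geometrize each graded piece as a line bundle on the iterated projective bundle $P=\P(\Omega_{\pi,\Delta^{(1)}})\times_Y\dotsm\times_Y\P(\Omega_{\pi,\Delta^{(k)}})$, apply HRR on $P$, push forward hyperplane powers by the Segre formula (getting the Segre classes directly rather than via symmetric-function identities), and evaluate the outer sum over $\Abs{\ell}=N$ via generating functions. The latter replaces the sum-to-integral step; the two are essentially equivalent here, but your generating-function argument is particularly clean to make rigorous because each $\sum_{\ell\geq 0}\ell^{a_j}x^{j\ell}$ is a \emph{rational} function with dominant pole of order $a_j+1$ at $x=1$, so no genuine Tauberian theorem is needed---partial fractions suffice (calling it ``Tauberian'' slightly overstates what is required). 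One small simplification you could make explicit: the monomials $\prod_j h_j^{a_j}$ with some $a_j<n-1$ do not merely give lower-order contributions, they push forward to \emph{zero} identically (Segre classes of negative index vanish), so only the exponents $a_j=n-1+q_j$ with $q_j\geq 0$ survive. Your bookkeeping of the $(-1)^n$ from $s_{q_j}(\Omega^\vee)=(-1)^{q_j}s_{q_j}(\Omega)$, of $(k!)^n$ from $\prod_j j^{n+q_j}$, and of the cancellation of the factorials $(n-1+q_j)!$ all check out. In short, your approach trades the paper's symmetric-function manipulation for a projective-bundle push-forward, and the paper's sum-integral approximation for rational generating-function asymptotics; both are valid and of comparable length.
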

\begin{proof}
  We follow in spirit Green and Griffiths~\cite[Prop.~1.10]{GG80}.
  By Proposition~\ref{prop:filtration}
  \[
    \ch E_{k,N}\Omega_{\pi,\Delta}
    =
    \sum_{\Abs{\ell}=N}
    \ch(S^{\ell_{1}}\Omega_{\pi,\Delta^{(1)}})
    \ch(S^{\ell_{2}}\Omega_{\pi,\Delta^{(2)}})
    \dotsm
    \ch(S^{\ell_{k}}\Omega_{\pi,\Delta^{(k)}}).
  \]
  For \(i=1,\dotsc,k\), the orbifold cotangent bundle \(\Omega_{\pi,\Delta^{(i)}}\) is a vector bundle of rank \(n\).
  Let \(\lambda_{1}^{(i)}\), \dots, \(\lambda_{n}^{(i)}\) be a set of Chern roots for it.
  In terms of these Chern roots, we get
  \[
    \ch E_{k,N}\Omega_{\pi,\Delta}
    =
    \sum_{\sum_{i=1}^{k}\sum_{j=1}^{n}ix_{i,j}=N}
    \exp\big(\sum_{i=1}^{k}\sum_{j=1}^{n}x_{i,j}\lambda_{j}^{(i)}\big).
  \]
  Using the sum-integral formula yields
  \[
    \ch E_{k,N}\Omega_{\pi,\Delta}
    =
    N^{kn-1}
    \int_{\sum_{i=1}^{k}\sum_{j=1}^{n}ix_{i,j}=1}
    \exp\big(\sum_{i=1}^{k}\sum_{j=1}^{n}Nx_{i,j}\lambda_{j}^{(i)}\big)
    \diff\omega
    +
    O(N^{kn-2}).
  \]
  Expanding the exponential:
  \[
    \ch E_{k,N}\Omega_{\pi,\Delta}
    =
    N^{(k+1)n-1}
    \int_{\sum_{i=1}^{k}\sum_{j=1}^{n}ix_{i,j}=1}
    \frac{\left(\sum_{i=1}^{k}\sum_{j=1}^{n}x_{i,j}\lambda_{j}^{(i)}\right)^{n}}{n!}
    \diff\omega
    +
    O(N^{(k+1)n-2}).
  \]
  Rescaling:
  \[
    \ch E_{k,N}\Omega_{\pi,\Delta}
    =
    N^{(k+1)n-1}
    \int_{\sum_{i=1}^{k}\sum_{j=1}^{n}x_{i,j}=1}
    \frac{\left(\sum_{i=1}^{k}\sum_{j=1}^{n}x_{i,j}\frac{\lambda_{j}^{(i)}}{i}\right)^{n}}{n!}
    \frac{\diff\omega}{(k!)^{n}}
    +
    O(N^{(k+1)n-2}).
  \]
  Using multinomial formula, one gets
  \begin{multline*}
    \ch E_{k,m}\Omega_{\pi,\Delta}
    =\\
    N^{(k+1)n-1}
    \sum_{\sum q_{i,j}=n}
    \frac{(\lambda_{1}^{(1)})^{q_{1,1}}\dotsm(\lambda_{n}^{(k)})^{q_{k,n}}}
    {1^{\sum q_{1,j}}\dotsm k^{\sum q_{k,j}}}
    \int_{\sum_{i=1}^{k}\sum_{j=1}^{n}x_{i,j}=1}
    \frac{x_{1,1}^{q_{1,1}}}{q_{1,1}!}
    \dotsm
    \frac{x_{k,n}^{q_{k,n}}}{q_{k,n}!}
    \frac{\diff\omega}{(k!)^{n}}
    +
    O(N^{(k+1)n-2}).
  \end{multline*}
  For any \(q\)'s with \(\sum q_{i,j}=n\), by calculus:
  \[
    \int_{\sum_{i=1}^{k}\sum_{j=1}^{n}x_{i,j}=1}
    \frac{x_{1,1}^{q_{1,1}}}{q_{1,1}!}
    \dotsm
    \frac{x_{k,n}^{q_{k,n}}}{q_{k,n}!}
    \diff\omega
    =
    \frac{1}{((k+1)n-1)!}.
  \]
  Factorizing:
  \[
    \ch E_{k,N}\Omega_{\pi,\Delta}
    =
    \frac{N^{(k+1)n-1}}{(k!)^{n}((k+1)n-1)!}
    \sum_{\sum q_{i,j}=n}
    \frac{(\lambda_{1}^{(1)})^{q_{1,1}}\dotsm(\lambda_{n}^{(k)})^{q_{k,n}}}
    {1^{\sum q_{1,j}}\dotsm k^{\sum q_{k,j}}}
    +
    O(N^{(k+1)n-2}).
  \]
  By plain linear algebra manipulations, one then gets
  \[
    \ch E_{k,N}\Omega_{\pi,\Delta}
    =
    \frac{N^{(k+1)n-1}}{(k!)^{n}((k+1)n-1)!}
    \sum_{\sum q_{i}=n}
    \frac{h_{q_{1}}(\lambda^{(1)})\dotsm h_{q_{k}}(\lambda^{(k)})}
    {1^{q_{1}}\dotsm k^{q_{k}}}
    +
    O(N^{(k+1)n-2}),
  \]
  where \(h_{q}\) is the \(q\)th complete symmetric function. It remains to note that a definition of Segre classes is
  \[
    s_{q}(\Omega_{\pi,\Delta^{(i)}})
    =
    (-1)^{q}h_{q}(\lambda^{(i)}).
  \]
  This proves the sought formula for the asymptotic Euler characteristic, by the Riemann--Roch theorem.
\end{proof}

For large jet orders, the asymptotic Euler characteristic is controlled by the canonical bundle of the logarithmic part of \(\Delta\).
\begin{prop}
  \label{prop:chi}
  For an adapted covering \(\pi\colon Y\to(X,\Delta)\) of a smooth orbifold pair,
  \[
    \chi_{k}(\pi,\Delta)
    =
    \frac{\big(K_{X}+\Delta^{(\infty)}\big)^{n}}{n!}
    (\log k)^{n}
    +
    O\bigl((\log k)^{n-1}\bigr).
  \]
\end{prop}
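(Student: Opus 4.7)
\emph{Reformulation.} I would start from the explicit formula for \(\chi_{k}(\pi,\Delta)\) established in Proposition~\ref{prop:RR}. Using the identity \(s_{q}(E)=(-1)^{q}h_{q}(\lambda)\) between Segre classes and complete symmetric functions in the Chern roots, and writing \(\lambda^{(i)}\) for a set of Chern roots of \(\Omega_{\pi,\Delta^{(i)}}\), the signs cancel against \((-1)^{n}\) and one obtains
\[
  \chi_{k}(\pi,\Delta)
  =
  \sum_{q\in\mathbb{N}^{k},\,\abs{q}=n}\,\prod_{i=1}^{k}\frac{h_{q_{i}}(\lambda^{(i)})}{i^{q_{i}}}.
\]
The whole task is then to extract the \((\log k)^{n}\) contribution from this (finitely supported) multiple sum.

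\emph{Stabilization.} The key geometric input is that the orbifold cotangent sheaves \(\Omega_{\pi,\Delta^{(i)}}\) stabilize for \(i\) large enough. Setting \(M\bydef\max\Set{m_{j}\suchthat m_{j}<\infty}\), the local generators from Sect.~\ref{sse:adapted.coverings} involve \((1-i/m_{j})^{+}\), which vanishes as soon as \(i\geq m_{j}\); hence \(\Omega_{\pi,\Delta^{(i)}}=\pi^{\ast}\Omega_{X}(\log\Delta^{(\infty)})\) for every \(i\geq M\), and in particular \(c_{1}(\Omega_{\pi,\Delta^{(i)}})=\pi^{\ast}(K_{X}+\Delta^{(\infty)})\) for all such \(i\). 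Only finitely many \(i<M\) can yield different Chern classes.

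\emph{Splitting and leading term.} I would then split the sum according to whether all parts of \(q\) lie in \(\Set{0,1}\) or some part is \(\geq 2\). The compositions in the former case select \(n\) distinct indices \(i_{1}<\dotsb<i_{n}\leq k\), contributing
\[
  \mathrm{main}_{k}
  =
  \sum_{1\leq i_{1}<\dotsb<i_{n}\leq k}\,\prod_{r=1}^{n}\frac{c_{1}\bigl(\Omega_{\pi,\Delta^{(i_{r})}}\bigr)}{i_{r}}.
\]
Thanks to the stabilization, the \(n\)-tuples in which some \(i_{r}<M\) number only \(O(k^{n-1})\) and thus contribute \(O((\log k)^{n-1})\); after replacing every Chern class by \(\pi^{\ast}(K_{X}+\Delta^{(\infty)})\), what remains is
\[
  \pi^{\ast}(K_{X}+\Delta^{(\infty)})^{n}\cdot\sum_{1\leq i_{1}<\dotsb<i_{n}\leq k}\frac{1}{i_{1}\dotsm i_{n}}
  =\pi^{\ast}(K_{X}+\Delta^{(\infty)})^{n}\cdot\Bigl(\frac{H_{k}^{n}}{n!}+O(H_{k}^{n-1})\Bigr),
\]
the last equality being a classical consequence of Newton's relations applied to \(x_{i}=1/i\) together with the boundedness of \(\sum_{i\geq 1}i^{-r}\) for \(r\geq 2\). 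Since \(H_{k}=\log k+O(1)\), this produces the announced leading term.

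\emph{Bound on the remainder and main obstacle.} The compositions with some \(q_{i}\geq 2\) contain a factor \(\sum_{i\geq 1}h_{q_{i}}(\lambda^{(i)})/i^{q_{i}}\) which is uniformly bounded in \(k\): the \(\lambda^{(i)}\) take only finitely many values and \(\sum_{i\geq 1}i^{-q_{i}}\) converges for \(q_{i}\geq 2\). For any such composition, at most \(n-2\) of the remaining parts equal \(1\), so at most \(n-2\) harmonic-type sums \(\sum 1/i\sim\log k\) survive, and there are only finitely many combinatorial patterns (partitions of \(n\) with a part \(\geq 2\)) to list, yielding a total contribution of \(O((\log k)^{n-2})\). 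The only real obstacle is the combinatorial bookkeeping of these patterns: it is elementary but needs to be organized carefully, for instance by grouping the nonzero parts of \(q\) according to their multiset of values.
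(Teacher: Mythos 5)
Your proposal is correct and reaches the same asymptotic, but it organizes the extraction of the \((\log k)^n\)-term differently from the paper. The paper's proof first fixes an \(i\) beyond which \(\Delta^{(p)}=\Delta^{(\infty)}\), groups the composition \(q\) into a bounded ``prefix'' \((q_1,\dotsc,q_i)\) and a ``suffix'' \((q_{i+1},\dotsc,q_k)\), observes that the prefix factor is \(O(1)\), and then invokes Green--Griffiths' original single-bundle asymptotic
\[
\sum_{q_{i+1}+\dotsb+q_k=q}
\frac{s_{q_{i+1}}(\Omega_{\pi,\Delta^{(\infty)}})\dotsm s_{q_k}(\Omega_{\pi,\Delta^{(\infty)}})}{(i+1)^{q_{i+1}}\dotsm k^{q_k}}
=\frac{(\log k)^{q}}{q!}\,s_1(\Omega_{\pi,\Delta^{(\infty)}})^{q}+O\bigl((\log k)^{q-1}\bigr)
\]
for the suffix, keeping only the terms with \(q_1=\dotsb=q_i=0\). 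You instead skip the reduction-to-GG step and re-prove the asymptotic from scratch by stratifying the compositions \(q\) according to whether every part lies in \(\{0,1\}\) or some part is \(\geq2\). Both proofs use exactly the same two ingredients --- stabilization of \(\Omega_{\pi,\Delta^{(i)}}\) for \(i\geq M\), plus harmonic-number versus \(\zeta\)-convergence estimates --- so the difference is one of bookkeeping rather than of substance. Your version is more self-contained (you do not delegate to GG's lemma) and makes the role of Newton's identities and the elementary symmetric function \(e_n(1/M,\dotsc,1/k)\) explicit; the paper's version is shorter but punts the combinatorics to the cited reference.

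One wording glitch to fix before this becomes a real proof: when handling tuples \(i_1<\dotsb<i_n\leq k\) with some \(i_r<M\), you write that they ``number only \(O(k^{n-1})\) and thus contribute \(O((\log k)^{n-1})\).'' The count of tuples is not the right quantity; the bound comes from the fact that one factor \(1/i_r\) is bounded below by \(1/M\) but the corresponding \(c_1\) is bounded, so after pulling that factor out you are left with at most \(n-1\) indices ranging over \([1,k]\), and each such index contributes at most \(H_k=O(\log k)\). State it that way. Also, for the remainder with some \(q_{i_0}\geq2\), as you yourself note, the factorization is not literal (the indices are ordered and coupled, and the \(h_{q_i}(\lambda^{(i)})\) are cohomology classes), so the argument should be phrased as an upper bound obtained by dropping the distinctness constraint and expanding the finitely many values of the Chern classes in a fixed basis; once that is said the \(O((\log k)^{n-2})\) bound is clean.
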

\begin{proof}
  We follow again Green--Griffiths~\cite{GG80}, with some slight modifications.
  Recall that one can fix some \(i\) such that \(\Delta^{(p)}\) coincides with \(\Delta^{(\infty)}\) for \(p>i\).
  Then:
  \begin{multline*}
    \sum_{q_{1}+\dotsb+q_{k}=n}
    \frac{
      s_{q_{1}}(\Omega_{\pi,\Delta^{(1)}})
      \dotsm
      s_{q_{k}}(\Omega_{\pi,\Delta^{(k)}})
    }
    {1^{q_{1}}\dotsm k^{q_{k}}}
    =\\
    \sum_{q_{1}+\dotsb+q_{i}+q=n}
    \left(
      \frac{
        s_{q_{1}}(\Omega_{\pi,\Delta^{(1)}})
        \dotsm
        s_{q_{i}}(\Omega_{\pi,\Delta^{(i)}})
      }
      {1^{q_{1}}\dotsm i^{q_{i}}}
      \sum_{q_{i+1}+\dotsb+q_{k}=q}
      \frac{
        s_{q_{i+1}}(\Omega_{\pi,\Delta^{(\infty)}})
        \dotsm
        s_{q_{k}}(\Omega_{\pi,\Delta^{(\infty)}})
      }
      {(i+1)^{q_{i+1}}\dotsm k^{q_{k}}}
    \right).
  \end{multline*}
  Reasoning in the exact same way as in~\cite{GG80}:
  \begin{multline*}
    \sum_{q_{1}+\dotsb+q_{k}=n}
    \frac{
      s_{q_{1}}(\Omega_{\pi,\Delta^{(1)}})
      \dotsm
      s_{q_{k}}(\Omega_{\pi,\Delta^{(k)}})
    }
    {1^{q_{1}}\dotsm k^{q_{k}}}
    =\\
    \sum_{q_{1}+\dotsb+q_{i}+q=n}
    \underbrace{
      \frac{
        s_{q_{1}}(\Omega_{\pi,\Delta^{(1)}})
        \dotsm
        s_{q_{i}}(\Omega_{\pi,\Delta^{(i)}})
      }
      {1^{q_{1}}\dotsm i^{q_{i}}}
    }_{O(1)}
    \left(
      \frac{(\log k)^{q}}{q!}s_{1}(\Omega_{\pi,\Delta^{(\infty)}})^{q}
      +
      O\bigl((\log k)^{q-1}\bigr)
    \right).
  \end{multline*}
  Hence, keeping only the term in \((\log k)^{n}\) (for which \(q_{1}=\dotsb=q_{i}=0\)),
  \[
    (-1)^{n}
    \sum_{q_{1}+\dotsb+q_{k}=n}
    \frac{
      s_{q_{1}}(\Omega_{\pi,\Delta^{(1)}})
      \dotsm
      s_{q_{k}}(\Omega_{\pi,\Delta^{(k)}})
    }
    {1^{q_{1}}\dotsm k^{q_{k}}}
    =
    \frac{(\log k)^{n}}{n!}c_{1}(\Omega_{\pi,\Delta^{(\infty)}})^{n}
    +
    O\bigl((\log k)^{n-1}\bigr).
  \]
  This finishes the proof.
\end{proof}
\begin{rema}
  Note that, in contrast with the compact setting and the logarithmic setting, here the condition \((K_{X}+\Delta^{(\infty)})^{n}>0\) does \emph{not} coincide with the condition of orbifold general type, since \eg{} it reduces to \((K_{X})^{n}>0\) when \(\Delta\neq0\) but \(\Delta^{(\infty)}=0\).
  This tends to show that in order to treat the orbifold Green--Griffiths conjecture one should also deal with higher order cohomology spaces.
\end{rema}

\section{Tautological inequalities and vanishing theorems}
\label{se:vanishing}
\subsection{Nevanlinna Theory and the tautological inequality}
We first recall useful results of Nevanlinna theory, following the point of view of Yamanoi in~\cite{Yam04} (see also the more recent \cite{Ya15} and \cite{PS}). As we shall show below, the orbifold setting fits perfectly with this point of view (\textit{cf}.\ Theorem~\ref{theo:taut}).
Let \(Y\) be a smooth projective manifold. We consider holomorphic curves \(g\colon V\to Y\), where \(V\) is a Riemann surface with a proper surjective holomorphic map \(\rho\colon V\to\C\) (which may be the identity):
\begin{equation}
  \label{eq:yam04}
  \vcenter{\xymatrix{
      V
      \ar[d]^{\rho}
      \ar[r]^{g}
      &
      Y
      \\
      \C
      &
  }}.
\end{equation}
Let \(t\) be the standard complex coordinate on \(\C\) and recall that we denote by \(\myfrac{\partial}{\partial t}\) the meromorphic lifting to \(V\) of the vector field \(\myfrac{\partial}{\partial t}\).

For a real \(r>0\), let \(V(r)\bydef\Set{v\in V\suchthat\abs{\rho(v)}<r}\). Recall the main Nevanlinna functions.
For an effective divisor \(D\bydef(\sigma=0)\) on \(Y\), and a hermitian metric \(\Abs{\cdot}\) on \(\O(D)\),
\begin{itemize}
  \item
    the \textsl{proximity function} to \(D\) of \(g\) is defined as
    \[
      m_{g}(r,D)
      \bydef
      \frac{1}{2\pi\deg\rho}
      \int_{\partial V(r)}
      \logp\frac{1}{\Abs{\sigma\circ g}}
      \cdot
      \rho^{\ast}\diff t;
    \]
  \item
    the \textsl{counting function} of \(D\) is defined as
    \[
      N(r,g^{\ast}D)
      \bydef
      \frac{1}{\deg\rho}
      \int_{1}^{r}
      \left(
        \tsum_{u\in V(s)}\ord_{u}g^{\ast}D
      \right)
      \frac{\diff s}{s};
    \]
  \item
    the \textsl{truncated counting function} of \(D\) is defined as
    \[
      N_{1}(r,g^{\ast}D)
      \bydef
      \frac{1}{\deg\rho}
      \int_{1}^{r}
      \left(
        \tsum_{u\in V(s)}\min\Set{1,\ord_{u}g^{\ast}D}
      \right)
      \frac{\diff s}{s}.
    \]
\end{itemize}
Lastly, for a line bundle \(L\) on \(Y\), the \textsl{height function} of \(g\) with respect to \(L\) is defined as
\[
  T_{g}(r,L)
  \bydef
  \frac{1}{\deg\rho}
  \int_{1}^{r}
  \left(
    \int_{V(s)}
    g^{\ast}c_{1}(L)
  \right)
  \frac{\diff s}{s}
  +O(1).
\]
Recall that the height function enjoys boundedness, additivity and functoriallity properties.

The Nevanlinna functions are related by the following fundamental result.
\begin{theo}[(First Main Theorem)]
  Assume that \(g(V)\not\subset\mathrm{Supp} D\). One has
  \[
    T_{g}(r,\O(D))
    =
    N(r,g^{\ast}D)
    +
    m_{g}(r,D)
    +
    O(1).
  \]
\end{theo}

Let us next recall the classical \textsl{Lemma on logarithmic derivatives}.
\begin{theo}[\cite{Nog85,Yam04}]
  \label{theo:derlog}
  Let \(\xi\) be a meromorphic function on \(V\) considered as a holomorphic function \(V \to \P^{1}\). Then for any \(\ell\geq1\), one has
  \[
    \frac{1}{2\pi\deg\rho}
    \int_{\partial V(r)}
    \logp \abs*{\frac{\frac{\partial^{\ell}}{\partial t^{\ell}}\xi}{\xi}}
    \cdot
    \rho^{\ast}\diff t
    \leq
    O(\logp T_{\xi}(r,[\infty]))
    +
    O(\log r)
    \ae.
  \]
  The symbol \(\Vert\) means that the inequality holds for \(r\geq 0\) outside a set of finite linear measure and \(\logp x= \max\Set{\log x,0}\).
\end{theo}

A geometrical consequence of the Lemma on logarithmic derivatives is McQuillan's ``tautological inequality''.
In the non-orbifold setting: let \(g_{[1]}\) denote the canonical lifting of a nonconstant holomorphic map \(g\colon V\to Y\) to \(\P(\Omega_{Y})\).
From Vojta \cite[Th. 29.6]{Vojta} (see also \cite{PS}), in the classical setting (without boundary):
\begin{theo}[(Tautological Inequality)]
  For an ample line bundle \(A\to Y\), one has:
  \[
    T_{g_{[1]}}(r,\O_{\P(\Omega_{Y})}(1))
    \leq
    N(r,\mathrm{Ram}(\rho))
    +
    O(\logp T_{g}(r,A))
    +
    O(\log r)
    \ae.
  \]
\end{theo}

We will now extend this classical result to the orbifold setting.
Let \((X,\Delta)\) be a smooth orbifold pair and let \(\pi\colon Y\to X\) be a \(\Delta\)-adapted Galois covering.
We consider \textsl{holomorphic liftings} \(g\colon V\to Y\) of orbifold entire curves \(f\colon\C\to(X,\Delta)\), where \(V\) is a Riemann surface with a proper surjective holomorphic map \(\rho\colon V\to\C\). Namely the curves \(f\) and \(g\) fit in the following commutative diagram:
\begin{equation}
  \label{eq:lifting}
  \vcenter{\xymatrix{
      V
      \ar[d]^{\rho}
      \ar[r]^{g}
      &
      Y
      \ar[d]^{\pi}
      \\
      \C
      \ar[r]^{f}
      &
      (X,\Delta)
  }}.
\end{equation}
According to Proposition~\ref{prop:orbifold.curve}, using this diagram, one can then define \(j_{1}^{\star}(g)\colon V\to J_{1}(\pi,\Delta)=\T_{\pi,\Delta}\) and thus \(g_{[1]}\colon V\to\P(\Omega_{\pi,\Delta})\). We fix this notation for later use.
Recall also that \(g^{\ast}P\) refers to the holomorphic function introduced in Definition~\ref{defi:g*P}.

Viewing any jet differential as a polynomial in the orbifold jet coordinates with holomorphic coefficients, one obtains the following important intermediate result.
\begin{coro}[(Lemma on logarithmic derivatives for orbifold jet differentials)]
  \label{coro:LLD-jets}
  Let \(P\in H^{0}(Y,E_{k,m}\Omega_{\pi,\Delta})\) be an orbifold jet differential.
  Let \(A\to X\) be an ample line bundle.
  If \(g^{\ast}P\not\equiv0\), then one has:
  \[
    \frac{1}{2\pi\deg\rho}
    \int_{\partial V(r)}
    \logp \abs{g^{\ast}P}
    \cdot
    \rho^{\ast}\diff t
    \leq
    O(\logp T_{g}(r,\pi^{\ast}A))
    +
    O(\log r)
    \ae.
  \]
\end{coro}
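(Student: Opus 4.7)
The plan is to reduce to the classical Lemma on Logarithmic Derivatives (Theorem~\ref{theo:derlog}) by an explicit local rewriting of \(P\).  I cover \(Y\) by finitely many adapted coordinate charts \(\{U_{\lambda}\}_{\lambda}\) as in Sect.~\ref{sse:adapted.coverings}, with a subordinate partition of unity \(\{\chi_{\lambda}\}\) chosen so that each \(\operatorname{supp}\chi_{\lambda}\) is relatively compact in \(U_{\lambda}\).  On each chart I use the identity displayed right after Definition~\ref{defi:differentials},
\[
  w_{i}^{-p_{i}(1-\myfrac{j}{m_{i}})^{+}}\pi^{\ast}(\diff[j]z_{i})
  =
  w_{i}^{\myfrac{p_{i}}{m_{i}}\min(m_{i},j)}\cdot\pi^{\ast}(\diff[j]z_{i}/z_{i}),
\]
to express \(P|_{U_{\lambda}}\) as a finite sum of monomials in the logarithmic jet coordinates \(\pi^{\ast}(\diff[j]z_{i}/z_{i})\), whose coefficients are products of holomorphic functions \(a_{\alpha}^{(\lambda)}\in\O_{Y}(U_{\lambda})\) and integer powers of the local coordinates \(w_{i}\).

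Next I evaluate at \(j_{k}^{\star}(g)=(g,f'\circ\rho,\dotsc,f^{(k)}\circ\rho)\).  Since this lift projects via \(\pi\) onto the \(k\)-jet of \(f\circ\rho\), each logarithmic jet coordinate \(\pi^{\ast}(\diff[j]z_{i}/z_{i})\) evaluates to the generalized logarithmic derivative \(\partial_{t}^{j}\xi_{i}/\xi_{i}\), where \(\xi_{i}\bydef f_{i}\circ\rho\colon V\to\P^{1}\).  Crucially, because \(Y\) is projective and \(\operatorname{supp}\chi_{\lambda}\) is relatively compact in \(U_{\lambda}\), the pullbacks \(a_{\alpha}^{(\lambda)}\circ g\) and \(g_{i}\bydef w_{i}\circ g\) are uniformly bounded on \(g^{-1}(\operatorname{supp}\chi_{\lambda})\).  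Applying Theorem~\ref{theo:derlog} to each \(\xi_{i}\), together with the standard inequalities \(\logp(\prod)\leq\sum\logp\) and \(\logp(\sum)\leq\sum\logp+\log(\#)\), and summing over the finite cover, yields
\[
  \frac{1}{2\pi\deg\rho}\int_{\partial V(r)}\logp\abs{g^{\ast}P}\cdot\rho^{\ast}\diff t
  \leq
  \tsum_{i}O\bigl(\logp T_{\xi_{i}}(r,[\infty])\bigr) + O(\log r)\ae.
\]

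Finally, by functoriality of the characteristic function and the First Main Theorem, any coordinate function \(z_{i}\) is a meromorphic function on \(X\) whose pole divisor is dominated by some multiple of the ample \(A\); hence \(T_{\xi_{i}}(r,[\infty]) = O(T_{g}(r,\pi^{\ast}A))\), and thus \(\logp T_{\xi_{i}}=O(\logp T_{g}(r,\pi^{\ast}A))\), concluding the proof.  The main subtle point is the first step: the rewriting relies crucially on \(p_{i}/m_{i}\) being an integer, which is guaranteed by the adapted covering condition.  This integrality ensures that the factors \(w_{i}^{(p_{i}/m_{i})\min(m_{i},j)}\) are genuinely holomorphic on \(Y\) (hence bounded on compact subsets of charts), rather than having fractional exponents whose pullbacks could grow like \(T_{g}\) and degrade the target bound from \(O(\logp T_{g})\) to \(O(T_{g})\).
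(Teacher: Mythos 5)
Your proof is correct and takes essentially the same route as the paper, which gives no details of its own but refers to the proof of Theorem A7.5.4 in Ru's book and notes exactly the ingredient you make explicit: that the orbifold jet coordinates evaluated on \(j_k^\star(g)\) reduce to logarithmic derivatives of \(\pi\circ g=f\circ\rho\). Your local rewriting of the generators of \(E_{k,m}\Omega_{\pi,\Delta}\) as \(w_i^{(p_i/m_i)\min(m_i,j)}\pi^{\ast}(\diff[j]z_i/z_i)\), with nonnegative integer exponents (so that the prefactors are holomorphic, hence bounded on a relatively compact subchart), is precisely what makes the adaptation of the classical argument work, and this is the identity the paper records just before its definition of \(E_{k,N}\Omega_{X,\Delta}\). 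The one point that deserves tightening is the last paragraph: the adapted coordinates \(z_i\) are a priori only \emph{local} coordinates, so \(\xi_i=z_i\circ f\circ\rho\) need not be a globally defined meromorphic function on \(V\), and \(T_{\xi_i}(r,[\infty])\) is not immediately meaningful. The standard remedy (and the one implicit in Ru's proof) is to build the charts from a fixed projective embedding by a power of \(A\), choosing the \(z_i\) to be restrictions of global rational functions whose zero/pole divisors are dominated by multiples of \(A\) and which cut out the \(\Delta_i\) in each chart; with that choice your functoriality estimate \(T_{\xi_i}(r,[\infty])=O(T_g(r,\pi^{\ast}A))\) is valid and the argument closes as you state.
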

\begin{proof}
  We refer to the proof of Theorem A7.5.4 in \cite{Ru01}, which can easily be adapted.
  In order to use Theorem~\ref{theo:derlog}, remind that the orbifold jet coordinates of \(g\) are obtained by applying \({\partial^{\ell}}/{\partial t^{\ell}}\) to \(\pi\circ g\) coordinatewise.
\end{proof}

A key feature of the orbifold tautological inequality is that, using the orbifold cotangent bundle instead of the usual cotangent bundle, one is able to get rid of the ramification term \(N(r,\mathrm{Ram}(\rho))\) for the maps \(g\) stemming from orbifold entire curves:
\begin{theo}[(Orbifold Tautological Inequality)]
  \label{theo:taut}
  Let \(g\colon V\to Y\) be the holomorphic lifting of an orbifold entire curve \(f\colon\C\to(X,\Delta)\).
  For an ample line bundle \(A\to X\), one has:
  \[
    T_{g_{[1]}}(r,\O_{\P(\Omega_{\pi,\Delta})}(1))
    \leq
    O(\logp T_{g}(r,\pi^{\ast}A))
    +
    O(\log r)
    \ae.
  \]
\end{theo}
\begin{proof}
  We follow the approach used by Vojta~\cite{Vojta}, to which we refer for the geometric interpretation of the proof.
  The rough idea is to see the integral in the Lemma on logarithmic derivatives for jet differentials as a proximity function to infinity, in an appropriate compactification.
  Let \(S\) be the total space of \(\T_{\pi,\Delta}\) and let \(\overline{S}=\P(\Omega_{\pi,\Delta}\oplus \O_{Y})\).
  Let \([\infty]\) denote the divisor \(\overline{S}\setminus S\).
  Let \(p\colon P\to\overline{S}\) be the blow-up of \(\overline{S}\) along the image \([0]\) of the zero section, let \(E\) denote its exceptional divisor and let \(q\colon P\to\P(\Omega_{\pi,\Delta})\).
  There is a lifting \(g_{[1]}^{\diamond}\) of \(g\) in \(\P(\Omega_{\pi,\Delta}\oplus\O_{Y})\) and a lifting \(\phi\) to \(P\).
  To sum up, one has the commutative diagram:
  \[
    \xymatrix{ &&&&P\ar_{q}[dl]\ar^{p}[dr]& \\ V\ar^{\rho}[d]\ar_{g_{[1]}}@/_{.8pc}/[rrr]\ar_{g}@/_{2pc}/[rrrr]\ar_{g_{[1]}^{\diamond}}@/_{3.2pc}/[rrrrr]\ar^{\phi}@/^/[urrrr] &&&\P(\Omega_{\pi,\Delta})\ar[r]&Y&\P(\Omega_{\pi,\Delta}\oplus\O_{Y})\ar[l] \\ \C }
  \]

  One has then (cf.~\cite{Vojta} for more details):
  \[
    p^{\ast}\O_{\P(\Omega_{\pi,\Delta}\oplus\O_{Y})}(1)
    \cong
    q^{\ast}\O_{\P(\Omega_{\pi,\Delta})}(1)\otimes\O(E)
    \cong
    q^{\ast}\O([\infty])\otimes\O(E).
  \]
  Hence:
  \[
    T_{g_{[1]}}(r,\O_{\P(\Omega_{\pi,\Delta})}(1))
    =
    T_{\phi}(r,q^{\ast}\O_{\P(\Omega_{\pi,\Delta})}(1))+O(1)
    \\
    =
    T_{g_{[1]}^{\diamond}}(r,[\infty])-T_{\phi}(r,E)+O(1).
  \]
  Now, since \(g\) is nonconstant, \(\phi(V)\not\subset E\), and \(T_{\phi}(r,E)\) is bounded from below.
  It remains to control \(T_{g_{[1]}^{\diamond}}(r,[\infty])\), using the First Main Theorem.
  By the Lemma on logarithmic derivatives, \(m_{g_{[1]}^{\diamond}}(r,[\infty])\) is bounded from above by \(O(\logp T_{g}(r,\pi^{\ast}A))+O(\log r)\).
  Lastly, since \(g\) is the holomorphic lifting of an orbifold curve, the map \(g_{[1]}^{\diamond}\) is holomorphic (cf.\ Prop.~\ref{prop:orbifold.curve}), and therefore \(N_{g_{[1]}^{\diamond}}(r,[\infty])=0\).
  This ends the proof.
\end{proof}

As an immediate corollary, one recovers the hyperbolicity of orbifold curves of general type.
\begin{coro}
  \label{coro:curve-gt}
  Let \((X, \Delta)\) be a smooth orbifold curve and let \(A\to X\) be an ample line bundle.
  For any orbifold entire curve \(f\colon\C\to(X,\Delta)\),
  one has:
  \[
    T_{f}(r,K_{X}+\Delta)
    \leq
    O(\logp T_{f}(r,A))
    +
    O(\log r)
    \ae.
  \]

  In particular, if \(K_{X}+\Delta=A>0\) then there is no entire curve \(f\colon\C\to(X,\Delta)\).
\end{coro}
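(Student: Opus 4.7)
The plan is to specialize the orbifold tautological inequality (Theorem~\ref{theo:taut}) to the one-dimensional case and identify \(\Omega_{\pi,\Delta}\) with \(\pi^{\ast}(K_{X}+\Delta)\) to translate heights on \(Y\) back to heights on \(X\).

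First, since \(\dim X=1\), the orbifold cotangent sheaf \(\Omega_{\pi,\Delta}\) is a line bundle on \(Y\), hence \(\P(\Omega_{\pi,\Delta})\cong Y\) and \(\O_{\P(\Omega_{\pi,\Delta})}(1)\cong\Omega_{\pi,\Delta}\); under this identification the canonical lift \(g_{[1]}\) coincides with \(g\) itself. Theorem~\ref{theo:taut} then reads
\[
  T_{g}(r,\Omega_{\pi,\Delta})
  \leq
  O(\logp T_{g}(r,\pi^{\ast}A))
  +
  O(\log r)
  \ae.
\]

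Next, I would check the isomorphism \(\Omega_{\pi,\Delta}\cong\pi^{\ast}(K_{X}+\Delta)\) of honest line bundles. From Section~\ref{sse:adapted.coverings}, the local generator \(w_{i}^{-p_{i}(1-\myfrac{1}{m_{i}})}\pi^{\ast}(\diff z_{i})\) presents \(\Omega_{\pi,\Delta}\) locally as \(\pi^{\ast}\Omega_{X}\otimes\O_{Y}(p_{i}(1-\myfrac{1}{m_{i}})D_{i})\). The adaptedness condition \(a_{i}\mid p_{i}\) makes the divisor \(p_{i}(1-\myfrac{1}{m_{i}})D_{i}=(1-\myfrac{1}{m_{i}})\pi^{\ast}\Delta_{i}\) integral, and these local identifications patch globally to \(\Omega_{\pi,\Delta}\cong\pi^{\ast}(\Omega_{X}\otimes\O_{X}(\Delta))=\pi^{\ast}(K_{X}+\Delta)\). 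Combining this with \(\pi\circ g=f\circ\rho\), functoriality of the height and the \(\myfrac{1}{\deg\rho}\)-normalization of \(T_{g}\) give \(T_{g}(r,\pi^{\ast}L)=T_{f}(r,L)\) for every line bundle \(L\) on \(X\); substituting into the previous display yields the announced inequality.

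For the \emph{in particular} claim, I set \(A=K_{X}+\Delta\) and use \(\logp T=o(T)\) as \(T\to\infty\) to deduce from the inequality that \(T_{f}(r,A)=O(\log r)\) (which extends to all \(r\) by monotonicity of \(T_{f}\), the exceptional set being of finite linear measure). To reach a contradiction when \(f\) is non-constant, I would note that a non-constant entire curve \(\C\to X\) forces \(X\) to have genus \(0\) (otherwise \(X\) is Brody-hyperbolic), so \(X=\P^{1}\) and \(f\) extends to a rational map \(\overline{f}\colon\P^{1}\to\P^{1}\) of some degree \(d\geq1\); applying Riemann--Hurwitz together with the orbifold constraint \(\mult_{t}(f^{\ast}\Delta_{i})\geq m_{i}\) at each preimage of \(\Delta_{i}\) gives \(\sum_{i}(1-\myfrac{1}{m_{i}})\leq 2\), contradicting the positivity \(\deg(K_{\P^{1}}+\Delta)>0\).

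The real work has been done in Theorem~\ref{theo:taut}; what remains is essentially bookkeeping. The only subtle points are verifying the integral isomorphism \(\Omega_{\pi,\Delta}\cong\pi^{\ast}(K_{X}+\Delta)\) under the adaptedness hypothesis, and tracking the normalization of the Nevanlinna height under the base change \(\rho\).
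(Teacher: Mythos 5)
The first half of your argument—specializing Theorem~\ref{theo:taut} to $\dim X = 1$, identifying $\P(\Omega_{\pi,\Delta})\cong Y$, $\O(1)\cong\Omega_{\pi,\Delta}\cong\pi^{\ast}(K_{X}+\Delta)$, and using the $\myfrac{1}{\deg\rho}$-normalization to pass from $T_{g}(r,\pi^{\ast}L)$ to $T_{f}(r,L)$—is exactly the paper's argument, carried out with a bit more care on the line-bundle identification. That part is correct.

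The deduction of the ``in particular'' claim is where you go off the rails. You assert that a nonconstant entire curve $\C\to X$ forces $X$ to have genus $0$ ``otherwise $X$ is Brody-hyperbolic''. That is false: elliptic curves (genus $1$) carry plenty of nonconstant entire curves. Worse, the conclusion you then draw—that $f$ extends to a rational map—is slipped in without justification; the fact that $X=\P^{1}$ does not by itself make an entire map $\C\to\P^{1}$ rational (think of $t\mapsto e^{t}$). The extension to a rational $\bar{f}\colon\P^{1}\to X$ is a separate, genus-independent classical theorem of Nevanlinna theory: from $T_{f}(r,A)=O(\log r)$ for $A$ ample one concludes that $f$ is algebraic, hence extends to $\P^{1}$. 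This is the route the paper actually takes: $T_{f}(r,A)=O(\log r)$ gives an extension to an orbifold morphism $\bar{f}\colon(\P^{1},D)\to(X,\Delta)$ with $D$ supported at infinity (so $\deg(K_{\P^{1}}+D)<0$), and then orbifold Riemann--Hurwitz against $\deg(K_{X}+\Delta)>0$ forces $\bar{f}$ constant. This handles all genera at once, with no need to split cases. Your Riemann--Hurwitz computation is fine on $\P^{1}$, but it leaves the genus-$1$ case untreated; note that if you had invoked the $T_{f}=O(\log r)\Rightarrow$ rational theorem properly, genus $\geq 1$ would be immediate (no nonconstant rational maps $\P^{1}\to X$), so the case split would disappear.

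Two minor points: (i) you correctly observe that $T_{f}(r,A)=O(\log r)$ at first holds only outside the exceptional set, and that monotonicity of $T_{f}$ extends it; good. (ii) Your isomorphism $\Omega_{\pi,\Delta}\cong\pi^{\ast}(K_{X}+\Delta)$ should be read as an equality of line bundles on $Y$ pulling back the $\Q$-line bundle $K_{X}+\Delta$; the integrality of the exponent $p_{i}(1-\myfrac{1}{m_{i}})$ does follow from $a_{i}\mid p_{i}$, as you say.
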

\begin{proof}
  For curves, the projection
  \(p\colon\P(\Omega_{\pi,\Delta})\to Y\) is an isomorphism and
  \(\O(1)\cong p^{*}\Omega_{\pi,\Delta}\cong p^{\ast}\pi^{\ast}(K_{X}+\Delta)\).
  Therefore by Theorem~\ref{theo:taut}, one has:
  \[
    T_{f}(r,K_{X}+\Delta)
    \leq
    O(\logp T_{f}(r,A))
    +
    O(\log r)
    \ae.
  \]
  Therefore \(f\) extends to an orbifold morphism \(\bar{f}\colon(\P^{1},D)\to(X,\Delta)\) where \(D\) is necessarily supported at infinity. \(\deg (K_{\P^{1}}+D)<0\) and thus \(\bar{f}\) has to be constant by the Riemann--Hurwitz formula.
\end{proof}

\subsection{A vanishing theorem for orbifold jet differentials}
Another immediate application of the tautological inequality is the following vanishing theorem for orbifold symmetric differentials vanishing on an ample divisor.
\begin{coro}
  Let \((X,\Delta)\) be a smooth orbifold pair,
  and let \(\pi\colon Y\to X\) be an adapted covering.
  If \(P\in H^{0}(Y,S^{\ell}\Omega_{\pi,\Delta}\otimes\pi^{\ast}A^{\vee})\) is a global orbifold symmetric differential vanishing on an ample divisor \(A\to X\), then for any holomorphic lifting \(g\colon V\to Y\) of an orbifold entire curve \(f\colon\C\to(X,\Delta)\), one has \(g^{\ast}P\equiv0\).
\end{coro}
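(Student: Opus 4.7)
My plan is to mirror the proof of Demailly's Fundamental Vanishing Theorem in the compact setting, combining the First Main Theorem with the Orbifold Tautological Inequality just established.

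First, I identify $P$ with a section $\tilde{P}\in H^{0}(\P(\Omega_{\pi,\Delta}),\O_{\P(\Omega_{\pi,\Delta})}(\ell)\otimes p^{\ast}\pi^{\ast}A^{\vee})$ via the canonical isomorphism $S^{\ell}\Omega\simeq p_{\ast}\O(\ell)$, where $p\colon\P(\Omega_{\pi,\Delta})\to Y$ is the bundle projection; the claim $g^{\ast}P\equiv 0$ then becomes $g_{[1]}(V)\subset(\tilde{P}=0)$. Arguing by contradiction, I assume $g_{[1]}(V)\not\subset(\tilde{P}=0)$. The First Main Theorem applied to the effective divisor $(\tilde{P}=0)$, of line-bundle class $\ell\,\O(1)-p^{\ast}\pi^{\ast}A$, gives
\[
\ell\,T_{g_{[1]}}(r,\O(1))-T_g(r,\pi^{\ast}A)\geq O(1),
\]
which combined with the upper bound from Theorem~\ref{theo:taut},
\[
T_{g_{[1]}}(r,\O(1))\leq O(\logp T_g(r,\pi^{\ast}A))+O(\log r)\ae,
\]
yields $T_g(r,\pi^{\ast}A)\leq \ell\cdot O(\logp T_g(r,\pi^{\ast}A))+O(\log r)$ outside a set of finite measure. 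A standard calculus lemma then forces $T_g(r,\pi^{\ast}A)=O(\log r)$.

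The final step, and the one I expect to carry the bulk of the technical difficulty, is to convert this slow growth into the actual vanishing $g^{\ast}P\equiv 0$. Following Corollary~\ref{coro:curve-gt}, the estimate $T_g=O(\log r)$ ensures that $f$ extends to an orbifold morphism $\bar{f}\colon(\P^{1},D_0)\to(X,\Delta)$ with $D_0$ supported at infinity, which lifts through the adapted covering to a morphism $\tilde{g}$ from a finite Galois cover of $\P^{1}$ to $Y$. Pulling $P$ back via $\tilde{g}$ and applying the differential of the orbifold morphism produces a section of a line bundle on the cover whose degree—controlled by an orbifold Riemann--Hurwitz computation together with the ampleness of $A$—turns out to be strictly negative; the section must therefore vanish identically, contradicting $g^{\ast}P\not\equiv 0$. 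Carrying out this final degree computation carefully in the orbifold category, with correct bookkeeping of the multiplicities of $D_0$, the branching of the adapted covering, and the contribution of $A^{\vee}$, is where I expect the key technical obstacle to lie.
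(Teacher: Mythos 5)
Your proposal is correct and follows essentially the same path as the paper: interpret $P$ as a section $\tilde P$ of $\O(\ell)\otimes p^{\ast}\pi^{\ast}A^{\vee}$ on $\P(\Omega_{\pi,\Delta})$, derive a lower bound on $\ell\,T_{g_{[1]}}(r,\O(1))-T_g(r,\pi^{\ast}A)$ from $g_{[1]}(V)\not\subset(\tilde P=0)$, combine with the Orbifold Tautological Inequality to force $T_g(r,\pi^{\ast}A)=O(\log r)$, then extend $f$ to a rational orbifold curve of nonpositive degree to get the contradiction. The paper phrases the first height bound via boundedness/additivity/functoriality of heights rather than invoking the First Main Theorem by name, and compresses the final degree argument to the single observation $\deg(K_{\P^1}+D)<0$, but these are the same moves.
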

\begin{proof}
  Considering the projectivization \(p\colon\P(\Omega_{\pi,\Delta})\to Y\), the symmetric differential \(P\) can be seen as a global section
  \(\tilde{P}\in H^{0}(\P(\Omega_{\pi,\Delta}),\mathcal{L})\),
  where
  \(\mathcal{L}\bydef\O_{\P(\Omega_{\pi,\Delta})}(\ell)\otimes p^{\ast}(\pi^{\ast}A^{\vee})\).
  Let \(g_{[1]}\) be the lifting to \(\P(\Omega_{\pi,\Delta})\) of \(g\) (such that \(g=p\circ g_{[1]}\)).
  Should \(g^{\ast}P=\tilde{P}(g_{[1]})\) not vanish, then, by the boundedness, additivity and functoriallity properties of the height function, one would get that
  \[
    T_{g_{[1]}}(r,\mathcal{L})
    =
    \ell\cdot T_{g_{[1]}}(r,\O(1))
    -
    T_{g}(r,\pi^{\ast}A)
  \]
  is bounded from below. Theorem~\ref{theo:taut} then implies that \(T_{f}(r,A)=T_{g}(r,\pi^{\ast}A)=O(\log r)\).
  Therefore \(f\) extends to an orbifold morphism \(\bar{f}\colon(\P^{1},D)\to(X,\Delta)\) where \(D\) is necessarily supported at infinity. Since \(\deg (K_{\P^{1}}+D)<0\), \(g^{\ast}P\) has to vanish, a contradiction.
\end{proof}
We shall now extend this result to higher order jet differentials.
Let us first settle the case of orbifold curves, in which the existence of orbifold jet differentials gives us an even stronger conclusion.
\begin{lemm}
  If an orbifold pair \((X,\Delta)\) is not of general type,
  then
  \[
    H^{0}(Y,E_{k,N}\Omega_{\pi,\Delta}\otimes\pi^{\ast}A^{\vee})
    =
    \Set{0},
  \]
  for any adapted covering \(\pi\colon Y\to X\),
  for all  \(k\geq1\) and \(N\geq1\),
  for any ample line bundle \(A\to X\).
\end{lemm}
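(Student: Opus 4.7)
The plan is to reduce to a degree count on the curve $Y$ via the Green--Griffiths filtration. The point is that on a curve every piece of the graded bundle becomes a pullback of a $\mathbb{Q}$-line bundle on $X$ whose degree is controlled by $\deg(K_X+\Delta)$.

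First, I would identify the orbifold cotangent bundles with explicit line bundles. Since $\dim X=1$ and $\pi$ is adapted, each $\Omega_{\pi,\Delta^{(j)}}$ is a line bundle on $Y$, and a direct computation on the local generator $w_i^{-p_i(1-j/m_i)^+}\pi^\ast(\diff z_i)$ combined with Riemann--Hurwitz ($\pi^\ast K_X\cong\Omega_Y(-\sum(p_i-1)D_i)$) yields the identification
\[
  \Omega_{\pi,\Delta^{(j)}}\;\cong\;\pi^\ast\bigl(K_X+\Delta^{(j)}\bigr),
\]
as integral line bundles on $Y$ (the right-hand side is integral by adaptedness, since $a_i\mid p_i$). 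For $j=1$ this gives $\Omega_{\pi,\Delta}\cong\pi^\ast(K_X+\Delta)$.

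Second, I would invoke the Green--Griffiths filtration (Proposition~\ref{prop:filtration}): any nonzero global section of $E_{k,N}\Omega_{\pi,\Delta}\otimes\pi^\ast A^\vee$ induces, through its leading term in the iterated filtration, a nonzero global section of some twisted graded piece
\[
  \bigotimes_{j=1}^{k} S^{\ell_j}\Omega_{\pi,\Delta^{(j)}}\otimes\pi^\ast A^\vee
  \;=\;
  \pi^\ast\Bigl(\tsum_{j=1}^k\ell_j\bigl(K_X+\Delta^{(j)}\bigr)-A\Bigr),
\]
with $\sum_j j\ell_j=N\geq1$ (the equality uses that symmetric powers of line bundles on a curve are tensor powers).

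Third, I would carry out the degree count. Because $(1-j/m_i)^+\leq 1-1/m_i$ for all $j\geq 1$ and $m_i\geq 1$, one has $\Delta^{(j)}\leq\Delta$ as effective $\mathbb{Q}$-divisors, hence $\deg(K_X+\Delta^{(j)})\leq\deg(K_X+\Delta)\leq0$, the last inequality coming from the hypothesis that $(X,\Delta)$ is not of general type (on a curve this means $K_X+\Delta$ is not big, equivalently has nonpositive degree). Since $N\geq1$ we have some $\ell_j\geq1$, and $\deg A>0$, so
\[
  \deg_Y\pi^\ast\!\Bigl(\tsum_j\ell_j(K_X+\Delta^{(j)})-A\Bigr)
  \;=\;
  \deg\pi\cdot\Bigl(\tsum_j\ell_j\deg(K_X+\Delta^{(j)})-\deg A\Bigr)
  \;<\;0,
\]
so the graded piece admits no nonzero global section, contradiction.

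The only delicate step is the first one, namely the clean identification $\Omega_{\pi,\Delta^{(j)}}\cong\pi^\ast(K_X+\Delta^{(j)})$ on a curve; everything else is a straightforward degree count. Once that identification is in place, the argument is forced by the filtration and the sign of $\deg(K_X+\Delta)$.
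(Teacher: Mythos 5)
Your argument is correct, but only for orbifold pairs of dimension one, which does not match the Lemma as stated. Although the surrounding text says ``Let us first settle the case of orbifold curves,'' the Lemma itself is phrased for an arbitrary orbifold pair \((X,\Delta)\), and the paper's proof is dimension-free. Your first step --- the identification \(\Omega_{\pi,\Delta^{(j)}}\cong\pi^\ast(K_X+\Delta^{(j)})\) as a line bundle on \(Y\), and the ensuing degree count on the curve \(Y\) --- is tied to \(\dim X = 1\): in higher dimension the sheaves \(\Omega_{\pi,\Delta^{(j)}}\) are not line bundles and there is no degree to count, nor does ``not of general type'' reduce to a degree condition.

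The paper's proof opens in the same way, via the Green--Griffiths filtration (Proposition~\ref{prop:filtration}), but then treats the graded pieces by a different device. It uses the chain of inclusions \(S^{\ell_j}\Omega_{\pi,\Delta^{(j)}}\subseteq S^{\ell_j}\Omega_{\pi,\Delta}\subseteq(\Omega_{\pi,\Delta})^{\otimes\ell_j}\) and then invokes the Campana--P\u{a}un criterion from~\cite{CP15}: if \((\Omega_{\pi,\Delta})^{\otimes q}\otimes\pi^\ast A^\vee\) has a nonzero global section for some integer \(q>0\) and some ample \(A\), then \((X,\Delta)\) is of general type. The contrapositive kills every graded piece \(\bigotimes_j S^{\ell_j}\Omega_{\pi,\Delta^{(j)}}\otimes\pi^\ast A^\vee\) in any dimension, hence kills \(H^0(Y,E_{k,N}\Omega_{\pi,\Delta}\otimes\pi^\ast A^\vee)\). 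Your degree count, where it applies, has the virtue of being self-contained and avoids citing a nontrivial theorem, but you should either state the dimension-one restriction explicitly (and accept that you are proving a weaker statement) or replace the degree count by the~\cite{CP15} criterion to recover the general case.
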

\begin{proof}
  Recall the graduation obtained from the Green--Griffiths filtration:
  \[
    \Grad E_{k,N}\Omega_{\pi,\Delta}
    \otimes
    \pi^{\ast}A^{\vee}
    =
    \bigoplus_{\Abs{\ell}=N}
    S^{\ell_{1}}\Omega_{\pi,\Delta}
    \otimes
    S^{\ell_{2}}\Omega_{\pi,\Delta^{(2)}}
    \otimes
    \dotsb
    \otimes
    S^{\ell_{k}}\Omega_{\pi,\Delta^{(k)}}
    \otimes
    \pi^{\ast}A^{\vee},
  \]
  and remark that for \(j=1,\dotsc,k\), one has
  \(
  S^{\ell_{j}}\Omega_{\pi,\Delta^{(j)}}
  \subseteq
  S^{\ell_{j}} \Omega_{\pi,\Delta}
  \subseteq
  (\Omega_{\pi,\Delta})^{\otimes\ell_{j}}
  \).
  Recall also from \cite{CP15} that if for some integer \(q>0\) and some ample line bundle \(A\), the vector bundle
  \((\Omega_{\pi,\Delta})^{\otimes q}\otimes \pi^{\ast}A^{\vee}\) has a nonzero global section,
  then the pair \((X,\Delta)\) is of general type.
  One infers that under the assumption of the Lemma, for any \(\ell\),
  the graded bundle
  \(
  \Grad E_{k,N}\Omega_{\pi,\Delta}\otimes\pi^{\ast}A^{\vee}
  \)
  has no global section.
  This fact holds \textit{a fortiori} for the bundle
  \(
  E_{k,N}\Omega_{\pi,\Delta}\otimes\pi^{\ast}A^{\vee}
  \)
  itself.
\end{proof}

\begin{coro}
  \label{coro:curve}
  If an orbifold curve \((X,\Delta)\) admits a nonconstant orbifold entire curve \(f\colon\C\to(X,\Delta)\),
  then
  \[
    H^{0}(Y,E_{k,N}\Omega_{\pi,\Delta}\otimes\pi^{\ast}A^{\vee})
    =
    \Set{0},
  \]
  for any adapted covering \(\pi\colon Y\to X\),
  for all  \(k\geq1\) and \(N\geq1\),
  for any ample line bundle \(A\to X\).
\end{coro}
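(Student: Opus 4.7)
The strategy is to reduce to the preceding lemma by ruling out the general-type case via the hyperbolicity statement already obtained. Concretely, the preceding lemma establishes the desired vanishing whenever \((X,\Delta)\) is not of general type, so the only thing to check is that the existence of a nonconstant orbifold entire curve \(f\colon\C\to(X,\Delta)\) precludes the general-type case.

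First I would recall that for an orbifold curve, ``general type'' simply means \(\deg(K_X+\Delta)>0\), in which case \(K_X+\Delta\) is ample. Applying Corollary~\ref{coro:curve-gt} with the choice \(A=K_X+\Delta\), the tautological-type bound
\[
  T_{f}(r,K_X+\Delta)\leq O(\logp T_{f}(r,K_X+\Delta))+O(\log r)\ae
\]
forces \(T_{f}(r,K_X+\Delta)=O(\log r)\), so \(f\) extends to an orbifold morphism \(\bar f\colon(\P^{1},D)\to(X,\Delta)\) with \(D\) supported at infinity; the Riemann--Hurwitz computation used in the proof of Corollary~\ref{coro:curve-gt} then forces \(\bar f\), and hence \(f\), to be constant, contradicting our hypothesis.

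Consequently \((X,\Delta)\) is not of general type. I would then invoke the preceding lemma directly: for any adapted covering \(\pi\colon Y\to X\), any \(k\geq1\), \(N\geq1\) and any ample \(A\to X\),
\[
  H^{0}(Y,E_{k,N}\Omega_{\pi,\Delta}\otimes\pi^{\ast}A^{\vee})=\{0\}.
\]

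There is no real obstacle here; the content of the statement is already packaged in the two preceding results. The only point requiring a line of care is the reduction from ``big'' to ``ample'' in the one-dimensional setting, which is immediate since bigness of a divisor on a curve is equivalent to positive degree, hence to ampleness, so that Corollary~\ref{coro:curve-gt} applies verbatim with \(A=K_X+\Delta\).
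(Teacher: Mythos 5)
Your proposal is correct, and the overall structure matches the paper's: reduce the corollary to the preceding lemma by showing that the existence of a nonconstant orbifold entire curve rules out the general-type case. The difference lies entirely in how this last implication is justified. The paper cites an external reference (Campana--Winkelmann \cite{CW09}) for the characterization that an orbifold curve $(X,\Delta)$ admits a nonconstant orbifold entire curve if and only if $\deg(K_X+\Delta)\leq 0$; it then observes that the existence of such a curve implies $(X,\Delta)$ is not of general type and invokes the lemma. You instead derive the needed implication (general type $\Rightarrow$ no nonconstant orbifold entire curve) internally from Corollary~\ref{coro:curve-gt}, which was already established via the orbifold tautological inequality. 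Since only this one direction of the \cite{CW09} equivalence is required, your route is self-contained and slightly more economical, at the cost of spelling out again the ``$T_f(r,\cdot)=O(\log r)$, extend to $\P^1$, Riemann--Hurwitz'' chain that the paper has already packaged inside Corollary~\ref{coro:curve-gt} --- in fact you could simply invoke the ``in particular'' clause of that corollary (after replacing the $\Q$-ample $K_X+\Delta$ by a suitable integral multiple, a step you rightly flag as harmless in dimension one). Both proofs are valid; yours trades an external citation for a short internal argument.
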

\begin{proof}
  From \cite{CW09} we have that \((X, \Delta)\) contains an orbifold entire curve \(f\colon\C\to(X,\Delta)\) if and only if \(\deg(K_{X}+\Delta)\leq0\). Therefore, if an orbifold curve \((X,\Delta)\) admits a nonconstant orbifold entire curve \(f\colon\C\to(X,\Delta)\) then \((X,\Delta)\) is not of general type and the previous lemma gives 
  \(H^{0}(Y,E_{k,N}\Omega_{\pi,\Delta}\otimes\pi^{\ast}A^{\vee}) = \Set{0}\).
\end{proof}

Now, we can extend the fundamental vanishing theorem of the jet differentials theory to the orbifold setting.
\begin{theo}
  \label{theo:OFVT}
  Let \((X,\Delta)\) be a smooth orbifold pair,
  and let \(\pi\colon Y\to X\) be an adapted covering.
  If \(P\in H^{0}(Y,E_{k,N}\Omega_{\pi,\Delta}\otimes\pi^{\ast}A^{\vee})\) is a global orbifold jet differential vanishing on an ample divisor \(A\to X\),
  then for any holomorphic lifting \(g\colon V\to Y\) of an orbifold entire curve,
  one has \(g^{\ast}P\equiv0\).
\end{theo}
\begin{proof}
  We follow the classical proof (see for example Theorem A7.5.5 in \cite{Ru01}).
  Let us show that \(f\) extends to a rational curve.
  Then, one gets an orbifold morphism \(\bar{f}\colon(\P^{1},D)\to(X,\Delta)\), where \(D\) is necessarily supported at infinity, together with a holomorphic lifting \(\bar{g}\). According to Remark~\ref{rema:pullback}, the jet differential \(P\) then pullbacks to a jet differential on \((\P^{1},D)\). Now, by construction, \((\P^{1},D)\) admits an (orbifold) entire curve. By Corollary~\ref{coro:curve}, it follows that the pullback of \(P\) (and therefore \(g^{\ast}P\)) vanishes identically.

  To show that \(f\) extends to a rational curve, by a classical result, it suffices to establish that
  \(T_{f}(r,A) = O(\log r)\), or equivalently that
  \(T_{g}(r,\pi^{\ast}A)=O(\log r)\).

  Since \(P\) vanishes on \(A\), viewing \(g^{\ast}P\) as a holomorphic function \(V\to\P^{1}\), one has
  \[
    T_{g}(r,\pi^{\ast}A)
    \leq
    O(T_{g^{\ast}P}(r,[\infty])).
  \]

  Now, recall from Definition~\ref{defi:g*P} that the function \(g^{\ast}P\colon V\to\C\) is holomorphic.
  Hence, one has \(N_{g^{\ast}P,[\infty]}\equiv0\).
  Furthermore, applying Corollary~\ref{coro:LLD-jets}, one obtains that the proximity function to infinity of \(g^{\ast}P\) satisfies:
  \[
    m_{g^{\ast}P}(r,[\infty])
    =
    O(\logp T_{g}(r,\pi^{\ast}A))
    +
    O(\log r)
    \ae.
  \]
  Therefore, one has
  \[
    T_{g}(r,\pi^{\ast}A)
    \leq
    O(\logp T_{g}(r,\pi^{\ast}A))
    +
    O(\log r).
  \]
  It follows that
  \(T_{g}(r,\pi^{\ast}A) = O(\log r)\),
  which ends the proof.
\end{proof}

A second version of the vanishing theorem, expressed directly on \(X\), is the following.
\begin{coro}
  \label{coro:OFVT2}
  If \(P\in H^{0}(X,E_{k,N}\Omega_{X,\Delta}\otimes A^{\vee})\) is a global orbifold jet differential vanishing on an ample divisor \(A\to X\), then for any orbifold entire curve \(f\colon\C\to(X,\Delta)\), one has \(f^{\ast}P\equiv0\).
\end{coro}
\begin{proof}
  It follows at once from Remark~\ref{rema:f*P} and from Theorem~\ref{theo:OFVT}.
\end{proof}

\subsection{Orbifold curves tangent to holomorphic foliations}
\label{sse:foliations}
In this section, we will extend to the orbifold setting McQuillan's degeneracy results for entire curves tangent to foliations on surfaces of general type \cite{McQ98} (see also \cite{EG} for the logarithmic setting and \cite{PS} for related results in the setting of parabolic Riemann surfaces).

\begin{theo}
  \label{theo:fol}
  Let \((X,\Delta)\) be a smooth orbifold surface of general type with a holomorphic foliation \(\F\). Any orbifold entire curve tangent to \(\F\) is algebraically degenerate.
\end{theo}

Let \(D\bydef\lceil \Delta \rceil\) and \(f_{[1]}\colon\C\to\P(\Omega_{X}(\log D))\) be the lifting of \(f\). We shall use the following tautological inequality due to McQuillan (see \cite{Vojta}):
\begin{equation}
  \label{eq:logtaut}
  T_{f_{[1]}}(r,\O(1))
  \leq
  N_{1}(r,f^{\ast}D)
  +
  O(\logp T_{f}(r,A))
  +
  O(\log r)
  \ae,
\end{equation}
where \(A\) is an ample line bundle on \(X\).

Let us recall the construction of Ahlfors currents associated to entire curves.
Let \(\eta \in A^{2}(X)\) be a \(2\)-form. Let \(T_{r}(\eta)\bydef\frac{T_{f,\eta}(r)}{T_{f,\omega}(r)}\). This defines a family of positive currents of bounded mass from which one can extract a closed postive current \(T\bydef\lim_{r_{n}} T_{r_{n}}\).

\begin{proof}
  We suppose that \(f\colon\C\to(X, \Delta)\) is a Zariski-dense orbifold curve. Let us prove that
  \[
    T(K_{X}+\Delta) \leq 0,
  \]
  thus contradicting that \((X, \Delta)\) is of general type.

  Let \(S \subset \P(\Omega_{X}(\log D))\) be the surface induced by the foliation \(\F\) and let \(\pi\colon S\to X\) be the projection.
  \(S\) contains \(f_{[1]}(\C)\) and , supposing that \(S\) dominates \(X\), \(S\) is equipped with a foliation \(\F_{0}\).
  After some blow ups, we obtain a foliated smooth surface \((S_{m}, D_{m}, \F_{m}) \to (S, \pi^{-1}(D), \F_{0})\), \ie{} \(S_{m}\) is smooth,  \(D_{m}\) is normal crossing and \(\F_{m}\) has reduced singularities. Let \(D_{m}=C+B\) where \(C\) is the invariant part of \(D_{m}\) by \(\F_{m}\). We have an exact sequence
  \[
    0 \to \mathcal{N}^{*}(C) \to T^{*}_{S_{m}}(\log D_{m}) \to K_{\F_{m}}(B).\mathcal{I}_{Z} \to 0,
  \]
  where \(\mathcal{I}_{Z}\) is an ideal supported on the singularity set \(Z\) of \(\F_{m}\).

  Now, we apply the logarithmic tautological inequality \eqref{eq:logtaut} which gives
  \[
    T_{{f_{m}}_{[1]}}(r,L)
    \leq
    N_{1}(r,f_{m}^{\ast}D_{m})
    +
    O(\logp T_{f}(r,A)+ \log r)
    \ae,
  \]
  where \(L=\mathcal{O}_{\P(\Omega_{\tilde{S}}(\log D_{m}))}(1)\), \(f_{m}\) and \({f_{m}}_{[1]}\) are the lifts of \(f\).

  We have
  \[
    L_{|Y}=p^{*}K_{{\F_{m}}}(B) \otimes\mathcal{O}(-E_{m}),
  \]
  where \(L_{|Y}\) denotes the restriction of \(L\) to the graph \(Y\) of the foliation, \(p\colon Y\to S_{m}\) the projection and \(E_{m}\) is the exceptional divisor.

  Therefore we obtain
  \[
    T_{{f}, K_{X}+D}(r)
    \leq
    T_{{f_{m}}, K_{S_{m}}+D_{m}}(r);
  \]
  hence
  \[
    T_{{f}, K_{X}+D}(r)
    \leq
    N_{1}(r,f^{\ast}D)
    +
    T_{{f_{m}}_{[1]}}(r,E_{m})
    +
    T_{f_{m}}(r,\mathcal{N}^{*}(C))
    +
    O(\logp T_{f}(r,A)+ \log r)
    \ae.
  \]

  Since \(f\) is an orbifold curve, we have
  \[
    m_{i}N_{1}(r,f^{\ast}\Delta_{i})
    \leq
    N(r,f^{\ast}\Delta_{i})
    \leq
    T_{f}(r,\Delta_{i}).
  \]

  This gives
  \[
    T(K_{X}+\Delta)
    \leq
    T'_{m}(E_{m})
    +
    T_{m}(\mathcal{N}^{*}(C)),
  \]
  where \(T'_{m}\) is the current associated to \({f_{m}}_{[1]}\).

  To finish the proof, we shall now use the two following results of Brunella \cite{Bru99} (and McQuillan \cite{McQ98}): \(T_{m}(\mathcal{N}^{*}(C)) \leq 0\) and \( T_{m}(E_{m}) \to 0\) as \(m \to \infty\) \ie{} performing infinitely many blow ups.
\end{proof}

Let us say that a holomorphic foliation \(\F\) on \(X\) is a \(\Delta\)-foliation if \(\pi^{\star}\F\) is a subsheaf of the orbifold tangent bundle \(T_{\pi,\Delta}\bydef\T_{\pi,\Delta}\).

\begin{theo}
  \label{theo:fol2}
  Let \((X, \Delta)\) be a smooth orbifold surface of general type with a \(\Delta\)-holomorphic foliation \(\F\) with reduced singularities, then any (\emph{orbifold or not}) entire curve tangent to \(\F\) is algebraically degenerate.
\end{theo}
\begin{proof}
  We suppose that \(f\colon\C\to X\) is a Zariski-dense curve tangent to \(\F\).
  We have the exact sequence \(0 \to \F \to T_{X} \to \mathcal{N}\).
  We have \(T(K_{\F}) \leq 0\) by a result of McQuillan (see \cite{Bru99}).
  We also have \(T(N^{*}(\Delta)) \leq T(N^{*}(\lceil \Delta \rceil))\leq 0\) by the already mentioned result of Brunella.
  Therefore we obtain, \(T(K_{X}+\Delta)=T(K_\F+N^{*}(\Delta)) \leq 0\),
  giving a contradiction.
\end{proof}

\begin{coro}
  Let \((X, \Delta)\) be a canonical orbifold surface of general type (\ie{} the pair \((X, \Delta)\) has canonical singularities). If \(\F\) is a \(\Delta\)-holomorphic foliation then any entire curve tangent to \(\F\) is algebraically degenerate.
\end{coro}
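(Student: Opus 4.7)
The plan is to reduce the corollary directly to Theorem~\ref{theo:fol2} by a resolution argument, using the canonical singularities hypothesis to preserve the general-type assumption. The point is that Theorem~\ref{theo:fol2} is already very close to what we want, except that it requires the foliation to have reduced singularities, whereas here we only assume that \(\F\) is a \(\Delta\)-holomorphic foliation with no a priori control on its singularities. So the natural strategy is to blow up.

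First, I would invoke Seidenberg's reduction theorem for foliations on surfaces: there exists a sequence of point blow-ups \(\mu\colon \tilde{X}\to X\) after which the transformed foliation \(\tilde{\F}\bydef\mu^{\ast}\F\) has only reduced singularities. Simultaneously, I would ensure (by further blow-ups if necessary) that the total transform \(\mu^{-1}(\abs{\Delta})\) has simple normal crossings support, and I would define \(\tilde{\Delta}\) as the strict transform of \(\Delta\) augmented by the exceptional divisors (with multiplicities dictated by the orbifold log pullback so that \(K_{\tilde{X}}+\tilde{\Delta}=\mu^{\ast}(K_{X}+\Delta)+E\) where \(E\) records the discrepancies). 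This gives a smooth orbifold surface \((\tilde{X},\tilde{\Delta})\).

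Next I would verify the two needed properties of \((\tilde{X},\tilde{\Delta})\). For general type: the hypothesis that \((X,\Delta)\) has canonical singularities means exactly that all discrepancies in the formula \(K_{\tilde{X}}+\tilde{\Delta}=\mu^{\ast}(K_{X}+\Delta)+E\) are \emph{non-negative}, so \(E\) is effective; since \(\mu^{\ast}(K_{X}+\Delta)\) is big (general type is preserved under birational pullback) and \(E\geq0\), the divisor \(K_{\tilde{X}}+\tilde{\Delta}\) is big, hence \((\tilde{X},\tilde{\Delta})\) is of general type. For the foliation condition: the inclusion \(\F\subseteq T_{\pi,\Delta}\) at the level of orbifold tangent sheaves pulls back, via compatible adapted coverings of \(\mu\), to an inclusion \(\tilde{\F}\subseteq T_{\tilde{\pi},\tilde{\Delta}}\), showing that \(\tilde{\F}\) is a \(\tilde{\Delta}\)-foliation.

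Finally, given any entire curve \(f\colon\C\to X\) tangent to \(\F\), I would lift it through \(\mu\) to an entire curve \(\tilde{f}\colon\C\to\tilde{X}\) tangent to \(\tilde{\F}\): either \(f(\C)\) is already contained in the image of the exceptional locus (in which case algebraic degeneracy is immediate) or the lift exists by the valuative criterion, since blow-ups of points on a smooth surface are isomorphisms away from finitely many fibers. Applying Theorem~\ref{theo:fol2} to \((\tilde{X},\tilde{\Delta},\tilde{\F})\) yields algebraic degeneracy of \(\tilde{f}\), and pushing forward by \(\mu\) gives algebraic degeneracy of \(f\). The main obstacle in this plan is the bookkeeping in step two: showing carefully that the correct definition of \(\tilde{\Delta}\) (accounting for the canonical discrepancies along \(E\)) is simultaneously compatible with the \(\Delta\)-foliation condition under pullback, so that both hypotheses of Theorem~\ref{theo:fol2} hold on \(\tilde{X}\). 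Everything else is a more or less standard resolution-and-lift argument.
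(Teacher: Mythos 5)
Your proposal follows essentially the same route as the paper: Seidenberg's reduction, then use the canonical hypothesis to preserve general type, then invoke Theorem~\ref{theo:fol2}. The extra steps you spell out (lifting the curve, checking the \(\tilde\Delta\)-foliation condition) are what the paper leaves implicit, so the structure is the same.

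One wrinkle worth flagging is your definition of \(\tilde\Delta\). You describe it as ``the strict transform of \(\Delta\) \emph{augmented by the exceptional divisors}'' while simultaneously keeping a separate effective \(E\) in the equation \(K_{\tilde X}+\tilde\Delta=\mu^{\ast}(K_X+\Delta)+E\); these two choices pull in opposite directions. The paper simply takes \(\tilde\Delta\) to be the strict transform, so that \(E\) is precisely the sum of the discrepancy divisors, all non-negative by the canonical hypothesis, and bigness of \(K_{\tilde X}+\tilde\Delta\) follows at once. This is also the cleanest choice for the second hypothesis of Theorem~\ref{theo:fol2}: if you instead add exceptional components to \(\tilde\Delta\) with positive orbifold coefficients, you would need to check that \(\tilde{\F}\) is actually tangent to those exceptional curves (or at least satisfies the \(\tilde\Delta\)-foliation condition along them), which is not automatic --- some exceptional curves produced by Seidenberg's procedure need not be \(\tilde{\F}\)-invariant. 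With \(\tilde\Delta\) the plain strict transform, this issue disappears and your verification of the \(\tilde\Delta\)-foliation condition is as straightforward as you indicate.
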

\begin{proof}
  By Seidenberg's theorem we can do some blow ups such that on \(\tilde{X}\) the induced foliation \(\tilde{\F}\)
  has only reduced singularities. Let us denote \(\tilde{\Delta}\) the strict transform of \(\Delta\). Then \((\tilde{X}, \tilde{\Delta})\) is a smooth orbifold of general type thanks to the hypothesis that \((X, \Delta)\) is canonical. Therefore we can apply Theorem~\ref{theo:fol2} to conclude.
\end{proof}

\section{Existence of orbifold jet differentials on varieties of general type}
\label{se:existence}
\subsection{Order-one jet differentials}
An immediate application of Theorem~\ref{theo:fol} is the following result (see also \cite{Rou10}).

\begin{theo}
  \label{theo:GGO1}
  Let  \((X,\Delta)\) be a smooth orbifold surface of general type.
  If one has
  \[
    H^{0}\big(X,{\textstyle\bigoplus_{N\geq1}}S^{N}\Omega_{X,\Delta}\otimes L^{\vee}\big)
    \neq
    \Set{0},
  \]
  for some ample line bundle \(L\) on \(X\),
  then there exists a proper closed subvariety \(Z\subsetneq X\) such that every nonconstant orbifold entire curve \(f\colon\C\to(X,\Delta)\) satisfies \(f(\C)\subseteq Z\).
\end{theo}

\begin{proof}
  Suppose there is a non trivial section \(s\in H^{0}\big(X,{\textstyle\bigoplus_{N\geq1}}S^{N}\Omega_{X,\Delta}\otimes L^{\vee}\big)\).
  Then by Corollary~\ref{coro:OFVT2}, any entire orbifold curve \(f\colon\C\to(X,\Delta)\) satisfies \(f^{\ast}s\equiv0\).
  In other words, \(f\) is tangent to the (multi-)foliation defined by \(s\).
  Then the same proof as in Theorem~\ref{theo:fol} implies that \(f\) is algebraically degenerate and \(f\) extends to a morphism \(f\colon(\P^{1}, \Delta') \to (X,\Delta)\), such that \(\deg(K_{(\P^{1}, \Delta')})\leq 0\).
  Theorem 6.6 in \cite{Rou10} gives that there are only finitely many such curves in \(X\), since \((X,\Delta)\) is of general type.
  This finite set defines a proper algebraic subset \(Z\subsetneq X\).
\end{proof}

As a consequence, one obtains the following orbifold version of results of Bogomolov and Mc Quillan \cite{McQ98} (see also \cite{Rou12}).
\begin{theo}
  \label{theo:order1}
  A smooth orbifold surface of general type \((X,\Delta)\) such that
  \[
    \chi_{1}(\pi,\Delta)
    =
    \big(c_{1}(\Omega_{\pi,\Delta})^{2}-c_{2}(\Omega_{\pi,\Delta})\big)
    >
    0
  \]
  satisfies the orbifold Green--Griffiths--Lang conjecture~\ref{conj:orbi-lang}.
\end{theo}
\begin{proof}
  By Riemann--Roch, if
  \(
  \chi_{1}(\pi,\Delta)
  =
  \big(c_{1}(\Omega_{\pi,\Delta})^{2}-c_{2}(\Omega_{\pi,\Delta})\big)
  >
  0
  \)
  then
  \[
    h^{0}(Y,S^{m}\Omega_{\pi,\Delta})+h^{2}(Y,S^{m}\Omega_{\pi,\Delta})
    \geq
    \frac{m^{3}}{6}(c_{1}(\Omega_{\pi,\Delta})^{2}-c_{2}(\Omega_{\pi,\Delta}))+O(m^{2}).
  \]
  Moreover by duality,
  \[
    H^{2}(Y,S^{m}\Omega_{\pi,\Delta})=H^{0}(Y,K_{Y}\otimes S^{m}\Omega_{\pi,\Delta} \otimes \mathcal{O}(-m \cdot \pi^{*}(K_{X}+\Delta)).
  \] 
  Since \(K_{X}+\Delta\) is big, for sufficiently large \(m\), \(m\cdot\pi^{\ast}(K_{X}+\Delta)-K_{Y}\) is effective.
  Then
  \[
    H^{2}(Y,S^{m}\Omega_{\pi,\Delta})
    \hookrightarrow
    H^{0}(Y,S^{m}\Omega_{\pi,\Delta}).
  \]
  This implies that
  \[
    h^{0}(Y,S^{m}\Omega_{\pi,\Delta})
    \geq
    \frac{m^{3}}{12}(c_{1}(\Omega_{\pi,\Delta})^{2}-c_{2}(\Omega_{\pi,\Delta}))+O(m^{2}).
  \]
  Therefore the orbifold cotangent bundle \(\Omega_{\pi,\Delta}\) is big and \((X,\Delta)\) satisfies the hypothesis of Theorem~\ref{theo:GGO1}.
\end{proof}

An interesting application of the preceding result is the following one, already discussed in the introduction.
\begin{coro}
  \label{coro:orbiCartan}
  Let \(X=\P^{2}\) and \(\Delta=\sum_{i=1}^{c} \left(1-\frac{1}{2}\right) L_{i}\) where \(L_{i}\) are lines in general position. If \(c\geq 11\) then \((X,\Delta)\) satisfies Conjecture~\ref{conj:orbi-lang}.
\end{coro}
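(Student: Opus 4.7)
The strategy is to verify the hypotheses of Theorem~\ref{theo:order1} applied to $(X,\Delta)=(\P^{2},\tfrac{1}{2}\sum_{i=1}^{c}L_{i})$: namely, that the pair is of general type and that
\[
\chi_{1}(\pi,\Delta) = c_{1}(\Omega_{\pi,\Delta})^{2}-c_{2}(\Omega_{\pi,\Delta})>0
\]
on some (equivalently, any) \(\Delta\)-adapted Galois covering \(\pi\colon Y\to X\).

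The first point is immediate: \(K_{X}+\Delta=\tfrac{c-6}{2}\,H\) is ample for every \(c\geq 7\), hence in particular for \(c\geq 11\). The substance of the argument therefore lies in the Chern class inequality, which should turn out to force exactly the threshold \(c\geq 11\).

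The plan for the Chern class computation is to exploit the defining short exact sequence~\eqref{eq:orbi_cotangent},
\[
0\to\Omega_{\pi,\Delta}\to\pi^{\ast}\Omega_{\P^{2}}(\log\abs{\Delta})\to\bigoplus_{i=1}^{c}\O_{\pi^{\ast}L_{i}/2}\to 0,
\]
and to extract Chern numbers from it by additivity of the Chern character. The first Chern class reads off immediately: \(c_{1}(\Omega_{\pi,\Delta})=\pi^{\ast}(K_{X}+\Delta)\). For \(\ch_{2}\), the logarithmic contribution is standard via the residue sequence on \(\P^{2}\), and each torsion quotient contributes \(\ch(\O_{D})=1-e^{-D}\) evaluated at \(D=\pi^{\ast}L_{i}/2\); then \(c_{2}\) follows from the rank-two identity \(c_{2}=\tfrac{1}{2}(c_{1}^{2}-2\ch_{2})\). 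Every intersection number reduces to data on \(\P^{2}\), namely \(H^{2}=1\), \(L_{i}^{2}=L_{i}\cdot L_{j}=1\), \(K_{\P^{2}}=-3H\), and \(c_{2}(\P^{2})=3\), so the output is a quadratic polynomial in \(c\) (divided by \(\deg\pi\)).

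The main, and relatively mild, obstacle is bookkeeping rather than anything conceptual: one must carefully track the factor \(1/m_{i}^{2}=1/4\) appearing in \(\ch_{2}\) of each torsion quotient, which is easy to mishandle. I expect the resulting quadratic in \(c\) to have largest root just below \(11\), which explains the sharpness of the hypothesis; once positivity is confirmed for \(c\geq 11\), Theorem~\ref{theo:order1} applies and concludes the proof.
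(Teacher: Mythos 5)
Your strategy is the same as the paper's: invoke Theorem~\ref{theo:order1} after checking that $(X,\Delta)$ is of general type and that $\chi_{1}(\pi,\Delta)=c_{1}(\Omega_{\pi,\Delta})^{2}-c_{2}(\Omega_{\pi,\Delta})>0$, which is exactly how the paper derives Corollary~\ref{coro:orbiCartan} as the degree-one case of the more general Corollary~\ref{coro:GGL-P2}. Carrying your Chern-character bookkeeping through the two residue sequences gives $\chi_{1}=\tfrac{\deg\pi}{8}\,(c^{2}-15c+48)$, whose larger root $\tfrac{15+\sqrt{33}}{2}\approx 10.37$ confirms positivity precisely for integers $c\geq 11$; so your outline is correct and you should just finish that last computation explicitly rather than leaving it as an expectation.
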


More generally, we get:
\begin{coro}
  \label{coro:GGL-P2}
  Let \(\Delta\) be an orbifold divisor on \(\P^{2}\) with orbifold multiplicities \(m_{i}\geq2\).
  If \(\Delta\) has either
  \begin{itemize}
    \item
      at least \(4\) components of degree at least \(11\),
    \item
      at least \(5\) components of degree at least \(6\),
    \item
      at least \(6\) components of degree at least \(4\),
    \item
      at least \(7\) components of degree at least \(3\),
    \item
      at least \(8\) components of degree at least \(2\),
    \item
      or at least \(11\) components (of arbitrary degrees),
  \end{itemize}
  then \((\P^{2},\Delta)\) satisfies Conjecture~\ref{conj:orbi-lang}.
\end{coro}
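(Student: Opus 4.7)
The strategy is to apply Theorem~\ref{theo:order1} in each of the six cases, which reduces the problem to verifying two numerical inequalities: first, that $(\P^2,\Delta)$ is of orbifold general type, and second, that $\chi_{1}(\pi,\Delta) = c_{1}(\Omega_{\pi,\Delta})^{2}-c_{2}(\Omega_{\pi,\Delta}) > 0$ for some (hence any) adapted cover $\pi\colon Y\to \P^2$.

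For general type, since $K_{\P^2}+\Delta = \bigl(-3 + \sum_{i}(1-\myfrac{1}{m_{i}})d_{i}\bigr) H$, it suffices to check that the coefficient is strictly positive in each listed case using the bounds $m_i\geq 2$, $d_i\geq d$, and the lower bound on the number of components $c$. This is a direct numerical verification: for example, $c=11$ components of any positive degree with $m_i=2$ gives coefficient $\geq 11/2 - 3 = 5/2 > 0$, and the other five cases are similarly checked.

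For the Chern number inequality, the first step is to derive an explicit formula for $\chi_{1}(\pi,\Delta)$ on $\P^{2}$. Using the defining short exact sequence~\eqref{eq:orbi_cotangent} together with $ch(\O_{kD}) = 1-e^{-kD}$, a Chern character computation yields
\[
  \tfrac{1}{\deg \pi}\,\chi_{1}(\pi,\Delta)
  =
  \tfrac{1}{2}\Bigl[(K_{X}+\Delta)^{2}+K_{X}^{2}-2c_{2}(X) - \tsum_{i}(1-\myfrac{1}{m_{i}^{2}})\Delta_{i}^{2}\Bigr],
\]
which one can sanity-check against the logarithmic computation $c_{2}(\Omega_{X}(\log D)) = c_{2}(X)-\chi(D)+\sum_{i<j}D_{i}\cdot D_{j}$. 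Specializing to $\P^{2}$ with $K_{X}^{2}=9$, $c_{2}(X)=3$, and $\Delta_{i}^{2}=d_{i}^{2}$, the condition becomes
\[
  \Bigl(\tsum_{i}(1-\myfrac{1}{m_{i}})d_{i}-3\Bigr)^{\!2} + 3 - \tsum_{i}\bigl(1-\myfrac{1}{m_{i}^{2}}\bigr)d_{i}^{2} > 0.
\]

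The final step is to verify this inequality case by case, which reduces to elementary arithmetic. The key observation is that the hardest situation is when every $m_{i}$ equals $2$ and every $d_i$ equals the stated lower bound $d$; increasing any $m_i$ beyond $2$ enlarges $(K_X+\Delta)^2$ more than it enlarges the corrective sum $\sum(1-\myfrac{1}{m_i^2})d_i^2$ (because the quadratic in $x=1-1/m$ opens upward with minimum at $x^*<1/2$ for the listed values of $c,d$), and increasing degrees or adding components preserves positivity under the stated regime. Plugging in the six boundary configurations $(c,d)\in\{(4,11),(5,6),(6,4),(7,3),(8,2),(11,1)\}$ with $m=2$ gives, after a direct computation, strictly positive values (respectively $1,12,12,12,4,1$ for the bracketed quantity), so the hypotheses of Theorem~\ref{theo:order1} are satisfied. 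The only delicate point in this plan is handling mixed multiplicities rigorously, which we address by either exhibiting monotonicity or by noting that it suffices to prove the inequality under the extremal substitution $m_i=2$, $d_i=d$ thanks to the convexity of the quadratic expression above in the variables $x_i=1-1/m_i$.
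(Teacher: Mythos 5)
Your outline is the paper's outline: apply Theorem~\ref{theo:order1}, which reduces matters to showing that \((\P^{2},\Delta)\) is of orbifold general type and that \(\chi_{1}(\pi,\Delta)>0\). Your explicit formula for \(\chi_{1}\) on \(\P^{2}\) is an equivalent rewriting of the one the paper uses (expand \(1-\myfrac{1}{m_{i}^{2}}=(1-\myfrac{1}{m_{i}})(1+\myfrac{1}{m_{i}})\) to check), and the six boundary values you list for \(2\chi_{1}/\deg\pi\) are all correct.

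The gap lies in the reduction to those boundary cases. The paper's decisive first move is \emph{not} a monotonicity inequality for \(\chi_{1}\) but a soft, a priori reduction on the level of orbifold curves: any entire curve that is orbifold for \((\P^{2},\Delta)\) is \emph{a fortiori} orbifold for \((\P^{2},\Delta')\) whenever \(\Delta'\leq\Delta\) (in particular when some components are deleted by setting their multiplicity to \(1\), or some multiplicities are lowered to \(2\)), so proving Conjecture~\ref{conj:orbi-lang} for the smaller divisor implies it for the larger. This sentence is what licenses throwing away all extraneous components of \(\Delta\) and replacing every surviving \(m_{i}\) by \(2\) \emph{before} any arithmetic. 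Your proposal tries to reach the same point through monotonicity of \(\chi_{1}\). The \(x_{i}=1-\myfrac{1}{m_{i}}\) direction can in fact be made rigorous — \(\chi_{1}\) is convex in each \(x_{i}\), and \(\partial\chi_{1}/\partial x_{i}\geq 0\) at \(x_{i}=\tfrac12\) exactly when \(\sum_{j\neq i}d_{j}\geq 6\), which holds in every listed regime — but the assertion that \emph{increasing degrees or adding components preserves positivity under the stated regime} is false. Adding a component of multiplicity \(2\) and degree \(d_{c+1}\) changes \(\chi_{1}/\deg\pi\) by
\(\tfrac{d_{c+1}}{2}\bigl(-3-\tfrac{d_{c+1}}{2}+\tsum_{i}x_{i}d_{i}\bigr)\),
which is negative once \(d_{c+1}\) is large; and even with a fixed number of components a single large degree can destroy positivity (four components, all of multiplicity \(2\), of degrees \(11,11,11,17\) already give \(\chi_{1}/\deg\pi=-1\)). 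Without the paper's a priori discarding of components your plan therefore cannot discard the harmful extra structure, and the claimed reduction to the extremal configuration \((m_{i},d_{i})=(2,d)\) is not justified. (This same numerical example also shows that the paper's own one-line ``concavity in the \(d_{i}\)'' treatment of mixed degrees is more delicate than it looks, but that issue is logically downstream of the missing a priori reduction, which is what you must supply first.)
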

\begin{proof}
  Considering the conjecture and the definition of orbifold curves,
  one can always remove some components (\ie{} take \(m_{i}=1\)),
  and one can always assume that all remaining orbifold multiplicities are equal to \(2\).
  Let us thus consider an orbifold divisor with \(c\) components, of respective degrees \(d_{1},\dotsc,d_{c}\), having all orbifold multiplicity \(2\).
  By Theorem~\ref{theo:order1}, it is then sufficient to prove that the orbifold pairs under consideration are of general type and satisfy \(\chi_{1}=s_{2}(\Omega_{\pi,\Delta})>0\).
  Namely, these have to satisfy
  \(d_{1}+\dotsb+d_{c}>6\)
  and
  \[
    \chi_{1}
    =
    \deg(\pi)
    \left(
      6
      -3\frac{\sum_{1\leq i\leq c}d_{i}}{2}
      +
      \frac{\sum_{1\leq i<j\leq c}d_{i}d_{j}-\sum_{1\leq i\leq c}d_{i}^{2}}{4}
    \right)
    >0.
  \]
  The first condition is clearly satisfied.
  The partial second derivative with respect to \(d_{i}\) of the second expression is (-1/2), whence it is a concave function.
  Let \(d_{m}\) be the minimum of the \(d_{i}\)'s and \(d_{M}\) their maximum.
  On the convex set \(\Set{d_{m}\leq d_{i}\leq d_{M}\forall i}\subseteq\R^{c}\), the minimum of the concave function under consideration is attained in an extremal point. At this point, \(c_{m}\) of the \(d_{i}\)'s have the value \(d_{m}\) and the others have the value \(d_{M}\).
  The minimum value is then
  \[
    6
    -
    3c_{m}
    \frac{d_{m}}{2}
    -
    3(c-c_{m})
    \frac{d_{M}}{2}
    +
    c_{m}(c_{m}-3)
    \frac{d_{m}^{2}}{8}
    +
    c_{m}(c-c_{m})
    \frac{d_{m}d_{M}}{4}
    +
    (c-c_{m})(c-c_{m}-3)
    \frac{d_{M}^{2}}{8}.
  \]
  Moreover, the derivative of this value with respect to \(d_{M}\) must be nonnegative, and the derivative with respect to \(d_{m}\) must be nonpositive, namely:
  \[
    (c-c_{m})
    \left(
      \frac{-3}{2}
      +
      c_{m}
      \frac{d_{m}}{4}
      +
      (c-c_{m}-3)
      \frac{d_{M}}{4}
    \right)
    \geq0
  \]
  and
  \[
    c_{m}
    \left(
      \frac{-3}{2}
      +
      (c_{m}-3)
      \frac{d_{m}}{4}
      +
      (c-c_{m})
      \frac{d_{M}}{4}
    \right)
    \leq0.
  \]
  One infers that if \(c_{m}\not\in\Set{0,c}\) then:
  \[
    \frac{3}{4}(d_{m}-d_{M})
    =
    \left(
      \frac{-3}{2}
      +
      c_{m}
      \frac{d_{m}}{4}
      +
      (c-c_{m}-3)
      \frac{d_{M}}{4}
    \right)
    -
    \left(
      \frac{-3}{2}
      +
      (c_{m}-3)
      \frac{d_{m}}{4}
      +
      (c-c_{m})
      \frac{d_{M}}{4}
    \right)
    \geq0.
  \]
  Therefore \(d_{m}=d_{M}\). Hence in any case, the minimum is attained in a point where all degrees are equal. We can thus assume that all degrees are \(d\). Then
  \[
    \chi_{1}
    =
    \deg(\pi)
    \left(
      6
      -\frac{3c}{2}d
      +\frac{c(c-3)}{8}d^{2}
    \right).
  \]
  It remains to check that this polynomial in \(d\) has a positive leading coefficients for \(c\geq4\), that its discriminant is negative for \(c>12\),
  and to compute the largest root
  for \(4\leq c\leq12\).
  These are easy computations.
\end{proof}

Up to passing to general hypersurfaces, we can strengthen the conclusion of Corollary~\ref{coro:GGL-P2} using Theorem~\ref{theo:hyperb}, since in all the considered cases \(\sum(1-\myfrac{1}{m_{i}})d_{i}>4\).
\begin{coro}
  \label{coro:GGL-P2+}
  If \(\Delta\) is a general orbifold divisor on \(\P^{2}\) satisfying the same assumptions,
  then all orbifold entire curves \(\C\to(\P^{2},\Delta)\) are constant.
\end{coro}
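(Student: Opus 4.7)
The plan is to combine the algebraic degeneracy result established in Corollary~\ref{coro:GGL-P2} with the hyperbolicity statement of Theorem~\ref{theo:hyperb}. First, I would invoke Corollary~\ref{coro:GGL-P2}: under any of the six sets of numerical assumptions, every nonconstant orbifold entire curve $f \colon \C \to (\P^2, \Delta)$ has image contained in a proper closed subvariety $Z \subsetneq \P^2$, \ie{} in an algebraic curve of $\P^2$.

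Second, I would upgrade algebraic degeneracy to constancy via Theorem~\ref{theo:hyperb}. That theorem requires the components of $\Delta$ to be general hypersurfaces, which is precisely the extra genericity assumption added in the statement of Corollary~\ref{coro:GGL-P2+}. Its numerical hypothesis for $n=2$ reads $\sum_{i=1}^{c}\bigl(1-\myfrac{1}{m_{i}}\bigr)d_{i}>4$. As in the proof of Corollary~\ref{coro:GGL-P2}, one may assume all multiplicities equal to $2$, so the condition simplifies to $\tsum_{i} d_{i}>8$.

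Third, I would quickly verify this inequality in each of the six listed cases. For $c\geq 4$ components of degree $\geq 11$ one has $\sum d_{i}\geq 44$; for $c\geq 5$ of degree $\geq 6$, $\sum d_{i}\geq 30$; for $c\geq 6$ of degree $\geq 4$, $\sum d_{i}\geq 24$; for $c\geq 7$ of degree $\geq 3$, $\sum d_{i}\geq 21$; for $c\geq 8$ of degree $\geq 2$, $\sum d_{i}\geq 16$; and for $c\geq 11$ components (of degree $\geq 1$), $\sum d_{i}\geq 11$. In every case $\sum d_{i}>8$, so the hypothesis of Theorem~\ref{theo:hyperb} is met.

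Since $f(\C)$ lies in an algebraic curve and $\Delta$ consists of general hypersurfaces, Theorem~\ref{theo:hyperb} forces $f$ to be constant, completing the argument. There is no genuine obstacle here: the work is entirely absorbed into the earlier Corollary~\ref{coro:GGL-P2} (for algebraic degeneracy) and Theorem~\ref{theo:hyperb} (for hyperbolicity of curves inside $\P^2$); the only thing to check is the elementary numerical verification above, which is why the assumption of generality of the components is explicitly added to the hypothesis.
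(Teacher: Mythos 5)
Your proposal is correct and takes essentially the same route as the paper's proof: apply Corollary~\ref{coro:GGL-P2} to get algebraic degeneracy, then use Theorem~\ref{theo:hyperb} (which requires the extra genericity of the components) to upgrade to constancy, checking that in each case $\sum(1-\myfrac{1}{m_i})d_i>4$. The paper states this numerical verification in one sentence before the corollary; your case-by-case check simply makes explicit the same observation.
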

\begin{proof}
  By Corollary~\ref{coro:GGL-P2}, all orbifold entire curves \(\C\to(\P^{2},\Delta)\) are contained in algebraic curves. Therefore, by Theorem~\ref{theo:hyperb}, these are constant.
\end{proof}

\subsection{Existence of orbifold jet differentials on surfaces}
We will now consider higher order jet differentials.
We shall use the following vanishing theorem for orbifold tensors recently obtained by Guenancia and P\u{a}un.
\begin{theo}[\cite{GP16}]
  \label{theo:GP}
  Consider an adapted covering \(\pi\colon Y\to(X,\Delta)\) of a smooth orbifold pair with
  \(K_{X}+\Delta\) ample. For all \(r>s\) one has
  \[
    H^{0}\big(Y,\,
      (\T_{\pi,\Delta})^{\otimes r}
      \otimes
      (\Omega_{\pi,\Delta})^{\otimes s}
    \big)
    =
    \Set{0}.
  \]
\end{theo}
This result allows us to use the Riemann--Roch approach on surfaces.
\begin{coro}
  \label{coro:exist}
  Consider an adapted covering \(\pi\colon Y\to(X,\Delta)\) of a smooth orbifold surface of general type.
  For each integer \(k\) such that \(K_{X}+\Delta^{(k)}\) is ample:
  \[
    \dim H^{0}\big(Y,E_{k,N}\Omega_{\pi,\Delta}\big)
    \underset{N\gg1}{\geq}
    \chi(E_{k,N}\Omega_{\pi,\Delta}).
  \]
\end{coro}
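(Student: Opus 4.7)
The proof will combine Riemann--Roch, Serre duality on the smooth surface $Y$, and the Guenancia--P\u{a}un vanishing of Theorem~\ref{theo:GP}. Since $\chi = h^0 - h^1 + h^2$, the inequality $h^0 \geq \chi$ is equivalent to $h^2 \leq h^1$, and I would prove the stronger statement $h^2(Y, E_{k,N}\Omega_{\pi,\Delta}) = 0$ for $N \gg 1$.

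First, the Green--Griffiths filtration of Proposition~\ref{prop:filtration}, via the long exact sequences in cohomology attached to its successive subquotients, bounds $h^2(Y, E_{k,N}\Omega_{\pi,\Delta})$ by $\sum_{\Abs{\ell}=N} h^2\bigl(Y, \bigotimes_{j=1}^k S^{\ell_j}\Omega_{\pi,\Delta^{(j)}}\bigr)$. Serre duality on the smooth projective surface $Y$ then rewrites each summand as $h^0\bigl(Y, K_Y \otimes \bigotimes_{j=1}^k S^{\ell_j}\T_{\pi,\Delta^{(j)}}\bigr)$. Dualizing the inclusions $\Omega_{\pi,\Delta^{(k)}} \subseteq \Omega_{\pi,\Delta^{(j)}}$ (valid for $j \leq k$) produces inclusions $\T_{\pi,\Delta^{(j)}} \subseteq \T_{\pi,\Delta^{(k)}}$, so each graded summand embeds into $\T_{\pi,\Delta^{(k)}}^{\otimes N}$. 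It thus suffices to show that $h^0(Y, K_Y \otimes \T_{\pi,\Delta^{(k)}}^{\otimes N}) = 0$ for $N \gg 1$.

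A direct computation in adapted local coordinates yields the key identity $\det \Omega_{\pi,\Delta^{(k)}} = \pi^*(K_X + \Delta^{(k)})$; combined with the Hurwitz formula $K_Y = \pi^*K_X + R$, it shows that $E_k \bydef K_Y - \pi^*(K_X + \Delta^{(k)})$ has nonnegative coefficient on each $D_i$ with $m_i < \infty$ and coefficient $-1$ on each logarithmic component, the negative part only restricting sections further and hence being harmless for a vanishing statement. The rank-$2$ identity $\det \Omega_{\pi,\Delta^{(k)}} \otimes \T_{\pi,\Delta^{(k)}} \cong \Omega_{\pi,\Delta^{(k)}}$ then gives
\[
K_Y \otimes \T_{\pi,\Delta^{(k)}}^{\otimes N} \;\cong\; \O_Y(E_k) \otimes \Omega_{\pi,\Delta^{(k)}} \otimes \T_{\pi,\Delta^{(k)}}^{\otimes(N-1)}.
\]

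The hard part will be the final vanishing of $h^0$ of this last expression. For $N \geq 3$ one has $r = N-1 > s = 1$, so Theorem~\ref{theo:GP} applied to the pair $(X, \Delta^{(k)})$ (whose canonical bundle is ample by hypothesis) yields $H^0(Y, \T_{\pi,\Delta^{(k)}}^{\otimes(N-1)} \otimes \Omega_{\pi,\Delta^{(k)}}) = 0$. The main obstacle is the extra fixed twist $\O_Y(E_k)$; I would dispose of it via a mild twisted extension of Theorem~\ref{theo:GP}, relying on the observation that $(N-2)\pi^*(K_X + \Delta^{(k)}) - E_k$ is ample on $Y$ for $N \gg 1$ and hence that the K\"ahler--Einstein argument underlying Guenancia--P\u{a}un still yields strict negative curvature once the fixed twist has been absorbed into the ample class. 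This would give $h^2 = 0$ and thus $h^0(E_{k,N}\Omega_{\pi,\Delta}) \geq \chi(E_{k,N}\Omega_{\pi,\Delta})$ for $N \gg 1$.
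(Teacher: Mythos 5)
Your proof follows the paper's for the first several steps: reduce to $H^2 = 0$, filter via Green--Griffiths, Serre dualize, and include the graded piece into a tensor power of $\T_{\pi,\Delta^{(k)}}$. (One slip here: the graded summand $S^{\ell_1}\T_{\pi,\Delta^{(1)}}\otimes\dotsb\otimes S^{\ell_k}\T_{\pi,\Delta^{(k)}}$ embeds into $\T_{\pi,\Delta^{(k)}}^{\otimes\abs{\ell}}$ with $\abs{\ell}=\ell_1+\dotsb+\ell_k$, not into the $N$th tensor power; this is harmless since $\abs{\ell}\to\infty$ with $N$, but it must be tracked.)

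The genuine gap is in the final step. You isolate the fixed divisor $E_k=K_Y-\pi^*(K_X+\Delta^{(k)})$ and propose to absorb it into $(N-2)\pi^*(K_X+\Delta^{(k)})$. But $\O((N-2)\pi^*(K_X+\Delta^{(k)}))=(\det\Omega_{\pi,\Delta^{(k)}})^{\otimes(N-2)}\hookrightarrow\Omega_{\pi,\Delta^{(k)}}^{\otimes 2(N-2)}$, and plugging this in gives $\T_{\pi,\Delta^{(k)}}^{\otimes(N-1)}\otimes\Omega_{\pi,\Delta^{(k)}}^{\otimes(2N-3)}$ --- now $s=2N-3>r=N-1$ as soon as $N\geq 3$, so Theorem~\ref{theo:GP} no longer applies, and no ``mild twisted extension'' will rescue a hypothesis that is simply violated. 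The resolution, which is exactly what the paper does, is to absorb the \emph{fixed} line bundle $\O(K_Y)$ (equivalently $\O(E_k)$) into a \emph{fixed} power: choose $p$ with $p\,\pi^*(K_X+\Delta^{(k)})-K_Y$ effective, giving $\O(K_Y)\hookrightarrow\O(p\,\pi^*(K_X+\Delta^{(k)}))\hookrightarrow\Omega_{\pi,\Delta^{(k)}}^{\otimes 2p}$; then Theorem~\ref{theo:GP} applies verbatim once $\abs{\ell}>2p$, which holds for $N$ large since $\abs{\ell}\geq N/k$. You were one observation away: because $E_k$ is independent of $N$, it should go into a fixed $\Omega$-power, leaving the growing $\T$-power to win.
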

\begin{proof}
  Since we are in the surface case, it is sufficient to prove that for large \(N\),
  \[
    H^{2}(Y, E_{k,N}\Omega_{\pi,\Delta})
    =
    \Set{0}.
  \]
  We use the graduation induced by the Green--Griffiths filtration
  \[
    \Grad E_{k,N}\Omega_{\pi,\Delta}
    =
    \bigoplus_{\Abs{\ell}=N}
    S^{\ell_{1}}\Omega_{\pi,\Delta^{(1)}}
    \otimes
    S^{\ell_{2}}\Omega_{\pi,\Delta^{(2)}}
    \otimes
    \dotsb
    \otimes
    S^{\ell_{k}}\Omega_{\pi,\Delta^{(k)}}.
  \]
  This shows that it is actually sufficient to prove that for all \(\ell\in\mathbb{N}^{k}\) with \(\Abs{\ell}=N\)
  \[
    H^{2}\big(Y,
      S^{\ell_{1}}\Omega_{\pi,\Delta^{(1)}}
      \otimes
      S^{\ell_{2}}\Omega_{\pi,\Delta^{(2)}}
      \otimes
      \dotsb
      \otimes
      S^{\ell_{k}}\Omega_{\pi,\Delta^{(k)}}
    \big)
    =
    \Set{0}.
  \]
  Using Serre duality, this is equivalent to
  \[
    H^{0}\big(Y,
      S^{\ell_{1}}\T_{\pi,\Delta^{(1)}}
      \otimes
      S^{\ell_{2}}\T_{\pi,\Delta^{(2)}}
      \otimes
      \dotsb
      \otimes
      S^{\ell_{k}}\T_{\pi,\Delta^{(k)}}
      \otimes
      \O(K_{Y})
    \big)
    =
    \Set{0}.
  \]
  Now, we remark that we have an injection
  \[
    S^{\ell_{1}}\T_{\pi,\Delta^{(1)}}
    \otimes
    S^{\ell_{2}}\T_{\pi,\Delta^{(2)}}
    \otimes
    \dotsb
    \otimes
    S^{\ell_{k}}\T_{\pi,\Delta^{(k)}}
    \hookrightarrow
    (\T_{\pi,\Delta^{(k)}})^{\otimes\abs{\ell}}.
  \]
  On the other hand, choosing \(p\) such that \(p\cdot\pi^{\ast}(K_{X}+\Delta^{(k)})-K_{Y}>0\), we obtain
  \[
    \O(K_{Y})
    \hookrightarrow
    \O(p\cdot\pi^{\ast}(K_{X}+\Delta^{(k)}))
    \hookrightarrow
    (\Omega_{\pi,\Delta^{(k)}})^{\otimes2p}.
  \]
  From Theorem~\ref{theo:GP}, we see that
  \[
    S^{\ell_{1}}\T_{\pi,\Delta^{(1)}}
    \otimes
    S^{\ell_{2}}\T_{\pi,\Delta^{(2)}}
    \otimes
    \dotsb
    \otimes
    S^{\ell_{k}}\T_{\pi,\Delta^{(k)}}
    \otimes
    \O(K_{Y})
    \hookrightarrow
    (\T_{\pi,\Delta^{(k)}})^{\otimes\abs{\ell}}
    \otimes
    (\Omega_{\pi,\Delta^{(k)}})^{\otimes2p}
  \]
  has no global sections as soon as \(\abs{\ell}>2p\).
  Since \(\abs{\ell}\geq\frac{N}{k}\), this is achieved as soon as \(N\) is large enough.
\end{proof}

\subsection{Projective plane}
We derive the following result on \(\P^{2}\), for smooth boundary divisors.
\begin{prop}
  \label{prop:order2}
  Every entire curve \(f\colon\C\to\P^{2}\) which ramifies over a smooth curve \(\mathcal{C}\) of degree \(d\geq12\) with sufficiently high order (\(\geq a_{\min}\) depending on \(d\)) satisfies an algebraic differential equation of order \(2\).
\end{prop}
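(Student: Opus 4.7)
The natural strategy is to combine the existence result Corollary~\ref{coro:exist} at jet order $k=2$ with the fundamental vanishing theorem. Set $(X,\Delta) = (\P^{2}, (1-\myfrac{1}{m})\mathcal{C})$ and fix an adapted covering $\pi\colon Y\to X$. For $d \geq 12$ and $m$ large enough (concretely $m > 2d/(d-3)$), the line bundle $K_{\P^{2}} + \Delta^{(2)} = (d(1-\myfrac{2}{m}) - 3)\,H$ is ample. Hence Corollary~\ref{coro:exist} applies and gives
\[
h^{0}\bigl(Y, E_{2,N}\Omega_{\pi,\Delta}\bigr)\;\geq\;\chi\bigl(E_{2,N}\Omega_{\pi,\Delta}\bigr)
\qquad\text{for }N\gg1.
\]

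The key step is to evaluate $\chi_{2}(\pi,\Delta)$ via Proposition~\ref{prop:RR}. For each $j\in\Set{1,2}$, the residue sequence defining $\Omega_{\pi,\Delta^{(j)}}$ gives $c_{1}(\Omega_{\pi,\Delta^{(j)}}) = \pi^{\ast}(K_{\P^{2}} + \Delta^{(j)})$, while carrying along the Chern character of the torsion quotient $\O_{\pi^{\ast}\mathcal{C}/(m/j)}$ yields, after integration on $Y$,
\[
\frac{c_{2}(\Omega_{\pi,\Delta^{(j)}})}{\deg\pi} \;=\; (1-\myfrac{j}{m})\, d(d-3) + 3.
\]
Substituting these into the Segre-class sum of Proposition~\ref{prop:RR} and setting $x = 1/m$, a direct expansion produces
\[
\frac{\chi_{2}(\pi,\Delta)}{\deg\pi} \;=\; \frac{2d^{2} - 27d + 48}{4} \;-\; 3\,d(d-3)\,x \;+\; 3\,d^{2}\,x^{2}.
\]
The constant coefficient is positive precisely when $d \geq 12$ (the larger root of $2d^{2} - 27d + 48$ being $\approx 11.4$), and for each such $d$ one can then choose $m \geq a_{\min}(d)$ large enough that this quadratic in $1/m$ remains positive. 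Consequently $h^{0}(Y, E_{2,N}\Omega_{\pi,\Delta})$ grows like a positive multiple of $N^{5}$.

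To upgrade this to a jet differential vanishing on an ample divisor, I would take $A$ a general line on $\P^{2}$ and use the restriction sequence
\[
0 \to E_{2,N}\Omega_{\pi,\Delta}\otimes\pi^{\ast}\O(-A) \to E_{2,N}\Omega_{\pi,\Delta} \to E_{2,N}\Omega_{\pi,\Delta}\big|_{\pi^{\ast}A} \to 0.
\]
The rightmost term is a sheaf on a curve whose $h^{0}$ is bounded by its generic rank times its degree, which is $O(N^{4})$; hence $h^{0}(Y, E_{2,N}\Omega_{\pi,\Delta}\otimes\pi^{\ast}\O(-A)) > 0$ for $N$ sufficiently large. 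Any nonzero section $P$ thus obtained is a global orbifold jet differential of order $2$ vanishing on an ample divisor, and Theorem~\ref{theo:OFVT} forces $g^{\ast}P \equiv 0$ for every holomorphic lifting $g$ of every orbifold entire curve $f\colon\C\to(\P^{2},\Delta)$. This is the algebraic differential equation of order~$2$.

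The principal obstacle is the Chern-class computation on the cover: $c_{2}(\Omega_{\pi,\Delta^{(j)}})$ is \emph{not} simply the pullback of $c_{2}(\Omega_{\P^{2}}(\log\mathcal{C}))$, because the torsion quotient $\O_{\pi^{\ast}\mathcal{C}/(m/j)}$ contributes nontrivially in degree two (essentially via $\mathrm{ch}(\O_{rD}) = 1 - e^{-rD}$). Only after carefully extracting this $m$-dependent contribution does the explicit quadratic in $1/m$ above appear, making both the threshold $d \geq 12$ and the existence of $a_{\min}(d)$ visible.
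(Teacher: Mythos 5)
Your proof is correct and follows the same route as the paper: apply Corollary~\ref{coro:exist} once $K_{\P^{2}}+\Delta^{(2)}>0$, then compute the leading coefficient of $\chi(E_{2,N}\Omega_{\pi,\Delta})$ via Proposition~\ref{prop:RR}. Your closed form
\[
\frac{\chi_{2}(\pi,\Delta)}{\deg\pi}
=\frac{2d^{2}-27d+48}{4}-3d(d-3)x+3d^{2}x^{2},\qquad x=\myfrac{1}{m},
\]
agrees with the paper's stated expression
$\chi(E_{2,N}\Omega_{\pi,\Delta})=\frac{N^{5}}{1920}\frac{\deg\pi}{a^{2}}\bigl((48-27d+2d^{2})a^{2}-12(d-3)da+12d^{2}\bigr)+O(N^{4})$
after clearing the $a^{2}$ denominator and dividing by the Riemann--Roch normalization $N^5/480$, and the threshold $d\geq12$ comes from the same constant term $2d^{2}-27d+48$. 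The one place where you go beyond the paper is a genuine improvement rather than a deviation: the paper's proof stops at ``The result follows,'' tacitly passing from sections of $E_{2,N}\Omega_{\pi,\Delta}$ to sections twisted by $\pi^{\ast}A^{\vee}$ before invoking Theorem~\ref{theo:OFVT}. Your restriction-sequence argument (or, equivalently, observing that twisting by a fixed line bundle perturbs $\chi$ and the $H^2$-vanishing estimate only in $O(N^{4})$) supplies exactly that omitted step, so the conclusion ``satisfies an algebraic differential equation of order $2$'' is actually justified.
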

\begin{table}[!ht]
  \[
    \begin{array}{||c|c||c|c||c|c||c|c||}
      \hline
      d&a_{\min}&
      d&a_{\min}&
      d&a_{\min}&
      d&a_{\min}
      \\\hline
      12&107&
      16&19&
      20\text{--}21&12&
      31\text{--}38&8
      \\
      13&44&
      17&16&
      22\text{--}23&11&
      39\text{--}60&7
      \\
      14&29&
      18&15&
      24\text{--}25&10&
      61\text{--}245&6
      \\
      15&22&
      19&13&
      26\text{--}30&9&
      246\text{--}\infty&5
      \\
      \hline
    \end{array}
  \]
  \caption{Minimal ramification orders for Prop.\ref{prop:order2}}
  \label{table:ramification.k=2}
\end{table}
\begin{proof}
  If \(a > 2d/(d-3)\) then \(K_{\P^{2}}+\Delta^{(2)}>0\), which allows us to apply Corollary~\ref{coro:exist}.
  Now, for \(k=2\), \(a\geq2\), Proposition~\ref{prop:RR} yields
  \[
    \chi\bigl(E_{2,N}\Omega_{\pi,\Delta}\bigr)
    =
    \frac{N^{5}}{1920}
    \frac{\deg(\pi)}{a^{2}}
    \Bigl(
      (48-27d+2d^{2})a^{2}
      -12(d-3)d a
      +12d^{2}
    \Bigr)
    +O\bigl(N^{4}\bigr).
  \]
  The result follows.
\end{proof}
\begin{rema}
  By Proposition~\ref{prop:nosections} below, jet order \(2\) is minimal for orbifold surfaces with smooth boundaries.
\end{rema}

\begin{rema}
  We have seen the asymptotic formula
  \[
    \chi\bigl(E_{k,N}\Omega_{\pi,\Delta}\bigr)
    =
    \frac{N^{(k+1)n-1}}{(k!)^{n}((k+1)n-1)!}
    \left(
      \frac{c_{1}(\Omega_{\pi,\Delta^{(\infty)}})^{n}}{n!}
      (\log k)^{n}
      +
      O\bigl((\log k)^{n-1}\bigr)
    \right)
    +O\bigl(N^{(k+1)n-2}\bigr).
  \]
  Since \(c_{1}^{2}(\P^{2})>0\), this Euler characteristic is always positive for \(k\) large enough.
  However, it is impossible to guarantee \(K_{X}+\Delta^{(k)}>0\) for such asymptotic jet orders \(k\).
\end{rema}

\subsection{Surfaces with trivial canonical bundle}
We shall now implement the Riemann--Roch approach in the interesting case of orbifold surfaces when the ambient surface has trivial canonical bundle.
\begin{theo}
  If \((X,\Delta)\) is a smooth orbifold surface with \(K_{X}\equiv0\), \(\Delta\) ample and \(\chi_{k}(\pi,\Delta)>0\), then for any ample line bundle \(L\to X\),
  \[
    H^{0}\big({\textstyle\bigoplus_{N\geq1}}E_{k,N}\Omega_{\pi,\Delta}\otimes L^{\vee}\big)
    \neq
    \Set{0}.
  \]
\end{theo}
\begin{proof}
  The case \(k=1\) follows at once from Proposition~\ref{prop:RR} and Corollary~\ref{coro:exist}.

  Assume now that \(\chi_{k}(\pi,\Delta)>0\) for \(k>1\).
  Since
  \[
  H^{0}(\oplus_{N\geq1}E_{k-1,N}\Omega_{\pi,\Delta}\otimes L^{\vee})
  \hookrightarrow
  H^{0}(\oplus_{N\geq1}E_{k,N}\Omega_{\pi,\Delta}\otimes L^{\vee}),
  \]
  reasoning by induction, one can moreover assume that \(\chi_{k-1}(\pi,\Delta)\leq0\).
  We then claim that \(K_{X}+\Delta^{(k)}>0\), and the result follows by Corollary~\ref{coro:exist}.

  Indeed, if not, then \(K_{X}+\Delta^{(k)}\equiv0\), \ie{} \(\Delta^{(k)}=\varnothing\) and
  \[
    \chi_{k}(\pi,\Delta)
    =
    \chi_{k-1}(\pi,\Delta)
    +
    \sum_{i=1}^{k-1}
    \frac{s_{1}(\Omega_{\pi,\Delta^{(i)}})}{i}
    \frac{s_{1}(\Omega_{\pi,\Delta^{(k)}})}{k}
    +
    \frac{s_{2}(\Omega_{\pi,\Delta^{(k)}})}{k^{2}}
    =
    \chi_{k-1}(\pi,\Delta)
    +
    \frac{s_{2}(\Omega_{X})}{k^{2}}.
  \]
  But by the classification of surfaces with trivial canonical bundle,
  \(s_{2}(\Omega_{X})=-c_{2}(X)\leq0\) and this yields a contradiction,
  since then \(0< \chi_{k}(\pi,\Delta) \leq \chi_{k-1}(\pi,\Delta) \leq 0\).
\end{proof}

\begin{coro}
  Let \((X,\Delta)\) be a smooth orbifold surface such that \(K_{X}\) is trivial and \(\abs{\Delta}\) is a smooth ample divisor.
  If the orbifold multiplicity is \(m\geq5\) and if \(c_{1}(\abs{\Delta})^{2}\geq 10c_{2}(X)\) then
  for any ample line bundle \(L\to X\),
  \[
    H^{0}\big({\textstyle\bigoplus}_{k,N\geq1}E_{k,N}\Omega_{\pi,\Delta}\otimes L^{\vee}\big)
    \neq
    \Set{0}.
  \]
\end{coro}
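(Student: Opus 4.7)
The plan is to invoke the preceding theorem, which reduces the problem to producing some integer \(k \geq 1\) with \(\chi_k(\pi,\Delta) > 0\); once such a \(k\) is found, that theorem directly provides the required nonzero section in \(H^0\bigl(\bigoplus_{N\geq 1} E_{k,N}\Omega_{\pi,\Delta}\otimes L^\vee\bigr)\), which sits inside the larger bidirect sum appearing in the corollary.

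First I would compute the relevant Segre classes of \(\Omega_{\pi,\Delta^{(i)}}\) explicitly. Since \(|\Delta|\) is a single smooth divisor with orbifold multiplicity \(m\), one has \(\Delta^{(i)} = (1-i/m)^+|\Delta|\); writing \(\pi^*|\Delta| = p\,D\) for the ramification locus and using \(K_X \equiv 0\), a Chern-character analysis of the defining exact sequence~\eqref{eq:orbi_cotangent} gives, after integration over \(Y\) and normalization by \(\deg\pi\):
\[
s_1(\Omega_{\pi,\Delta^{(i)}}) \cdot s_1(\Omega_{\pi,\Delta^{(j)}}) = (1-i/m)^+(1-j/m)^+\,\ell^2,\qquad s_2(\Omega_{\pi,\Delta^{(i)}}) = -(1-i/m)^+(i/m)\,\ell^2 - \tau,
\]
where \(\ell^2 \bydef c_1(|\Delta|)^2\) and \(\tau \bydef c_2(X)\). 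Plugging these into Proposition~\ref{prop:RR}, the terms reorganize into
\[
\frac{\chi_k(\pi,\Delta)}{\deg\pi} \;=\; \ell^2\,\Phi_k(m) \;-\; \tau\,C_k,
\]
where \(C_k \bydef \sum_{i=1}^k 1/i^2\), \(A_k \bydef \sum_{i=1}^{k} (1-i/m)^+/i\), \(B_k \bydef \sum_{i=1}^{k} ((1-i/m)^+)^2/i^2\), and \(\Phi_k(m) \bydef \tfrac{1}{2}(A_k^2 - B_k) - A_k/m\).

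Letting \(k \to \infty\), the sums \(A_k\) and \(B_k\) stabilize (their contributions vanish for \(i \geq m\)), whereas \(C_k \to \pi^2/6\). Under the hypothesis \(\ell^2 \geq 10\,\tau\), it is thus enough to establish the numerical inequality \(\Phi_\infty(m) > \pi^2/60 \approx 0.165\) for every integer \(m \geq 5\). The critical case is \(m = 5\): direct computation gives \(A_\infty = 77/60\), \(B_\infty = 2701/3600\), and hence \(\Phi_\infty(5) = 23/120 \approx 0.192\), which comfortably beats \(\pi^2/60\). For \(m \geq 6\) the bound becomes strictly easier, since \(\Phi_\infty(m)\) grows roughly as \((\log m)^2/2\) while the right-hand side remains fixed; monotonicity is easily confirmed by evaluating a few small values.

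The main obstacle is the verification at the boundary case \(m = 5\), where the hypothesis constant \(10\) has been chosen with this tight situation in mind (the sharp threshold being \(20\pi^2/23 \approx 8.58\)); any substantially looser estimate would fail precisely here. For \(m \geq 6\) all constants become very forgiving and one could relax the hypothesis significantly.
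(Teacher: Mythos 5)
Your proposal is correct and follows essentially the same route as the paper. Both arguments apply the immediately preceding theorem, which reduces the problem to exhibiting some \(k\) with \(\chi_{k}(\pi,\Delta)>0\); both compute the Segre classes of the \(\Omega_{\pi,\Delta^{(i)}}\) from the residue exact sequence using \(K_{X}\equiv 0\); both then isolate the coefficient of \(\ell^{2}=c_{1}(\abs{\Delta})^{2}\) in \(\chi_{k}\) and verify the critical case \(m=5\) numerically. Your \(\Phi_{\infty}(m)=\tfrac{1}{2}(A_{\infty}^{2}-B_{\infty})-A_{\infty}/m\) is exactly the paper's coefficient \(c_{m}=\sum_{2\leq j_{1}<j_{2}\leq m}\tfrac{1}{j_{1}j_{2}}-\tfrac{m-1}{2m}\) written differently (one checks \(A_{\infty}=H_{m}-1\), from which the identity follows), and your value \(\Phi_{\infty}(5)=23/120\) agrees with \(c_{5}\); the paper's observation \(\pi^{2}/(6c_{m})\leq 10\) is your \(\Phi_{\infty}(m)>\pi^{2}/60\). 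The only mild weakness, shared with the paper, is that the extension from \(m=5\) to all \(m\geq 5\) is asserted by ``monotonicity confirmed by a few values'' rather than proved; this is easy to nail down, since \(c_{m}-c_{m-1}=\tfrac{H_{m-1}-1}{m}-\tfrac{1}{2m(m-1)}>0\) for \(m\geq 4\), so \(c_{m}\geq c_{5}>\pi^{2}/60\) for all \(m\geq 5\).
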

\begin{proof}
  Recall that for \(k\) big enough, the positivity of the Euler characteristic is given by the positivity of the coefficient
  \[
    \chi_{k}(\pi,\Delta)
    \bydef
    (-1)^{n}
    \!\!\sum_{q\in\mathbb{N}^{k}\colon\abs{q}=n}\!\!
    \frac{s_{q_{1}}(\Omega_{\pi,\Delta^{(1)}})}{1^{q_{1}}}
    \dotsm
    \frac{s_{q_{k}}(\Omega_{\pi,\Delta^{(k)}})}{k^{q_{k}}}.
  \]
  Now, from the residue short exact sequence:
  \[
    s(\Omega_{\pi,\Delta})
    =
    s(\Omega_{X})
    \prod_{i}
    \frac{(1-c_{1}(D_{i}))}{(1-c_{1}(D_{i})/m_{i})}.
  \]
  If \(X\) is a surface with trivial canonical bundle, a formal computation yields that for \(k\geq m_{i},\forall i\):
  \begin{multline*}
    \chi_{k}(\pi,\Delta)
    =
    -\sum_{1\leq j\leq k}
    \left(\frac{1}{j^{2}}\right)
    c_{2}(X)
    +\\
    \sum_{i_{1}<i_{2}}
    \left(
      \sum_{2\leq j_{1}\leq m_{i_{1}}}
      \frac{1}{j_{1}}
      \sum_{2\leq j_{2}\leq m_{i_{2}}}
      \frac{1}{j_{2}}
    \right)
    c_{1}(D_{i_{1}})c_{1}(D_{i_{2}})
    +\\
    \sum_{i}
    \left(
      \sum_{2\leq j_{1}<j_{2}\leq m_{i}}
      \frac{1}{j_{1}j_{2}}
      -\frac{(m_{i}-1)}{2m_{i}}
    \right)
    c_{1}(D_{i})^{2}.
  \end{multline*}
  Recall that \(c_{2}(X)\geq0\).
  In the one component case one gets:
  \[
    \chi_{k}(\pi,\Delta)
    \geq
    \left(
      \sum_{2\leq j_{1}<j_{2}\leq m}
      \frac{1}{j_{1}j_{2}}
      -\frac{(m-1)}{2m}
    \right)
    c_{1}(D)^{2}
    -
    \frac{\pi^{2}}{6}
    c_{2}(X).
  \]
  A numerical exploration shows that the coefficient \(c_{m}\) of \(c_{1}(D)^{2}\) becomes positive for \(m\geq5\) and that then \(\pi^{2}/(6c_{m})\leq10\).
\end{proof}
\begin{rema}
  The same proof shows that the result also holds \eg{} if \(\abs{\Delta}\) has several components \(\Delta_{i}\) with multiplicities \(m_{i}=2\) such that
  \[
    c_{1}(D)^{2}
    -
    \sum_{i}
    3c_{1}(D_{i})^{2}
    \geq
    \frac{4\pi^{2}}{3}
    c_{2}(X).
  \]
  (Anticipating the next section, notice that this of course never holds in the \(1\)-component case.)
\end{rema}

\section{Non-existence of orbifold jet differentials on varieties of general type}
\label{se:nonexistence}
The following results give evidence in support of Conjecture~\ref{conj:orbiDem}.
\subsection{Projective spaces}
We start with \(\P^{n}\), with a suitable smooth boundary divisor, giving examples of orbifolds of general type without any nonzero global jet differentials.
To see this, we first establish the following vanishing theorem for orbifold jet differentials, in the spirit of Diverio~\cite{Div08}.
\begin{prop}
  \label{prop:nosections}
  Take \(X=\P^{n}\) and \(\Delta=\left(1-\myfrac{1}{m}\right)H\), for a smooth hypersurface \(H\) of degree \(d\geq3\). If \(m\leq n\), then for any adapted covering \(\pi\colon Y\to(X,\Delta)\), for \(k\geq1\) and for \(N\geq1\), one has \(H^{0}(Y,E_{k,N}\Omega_{\pi,\Delta})=\Set{0}\).
  This vanishing holds without the assumption \(m\leq n\) when \(k<n\).
\end{prop}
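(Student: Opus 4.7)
The strategy adapts to the orbifold setting the vanishing argument of Diverio~\cite{Div08} for jet differentials on \(\P^{n}\): the plan is to push everything down to \(\P^{n}\) via the Galois cover \(\pi\), and then conclude via the Euler sequence.

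First, by the Green--Griffiths filtration of Proposition~\ref{prop:filtration}, it suffices to show that for every multi-index \(\ell=(\ell_{1},\ldots,\ell_{k})\in(\Z_{\geq 0})^{k}\) with \(\sum_{j}j\ell_{j}=N\geq 1\),
\[
H^{0}\Bigl(Y,\, \bigotimes_{j=1}^{k}S^{\ell_{j}}\Omega_{\pi,\Delta^{(j)}}\Bigr)=0.
\]
Since \(\Delta^{(j)}=(1-j/m)^{+}H\), this involves the compact pullback \(\pi^{\ast}\Omega_{\P^{n}}\) for every \(j\geq m\), and, for \(1\leq j<m\), the intermediate bundle fitting into two short exact sequences
\[
0\to \Omega_{\pi,\Delta^{(j)}}\to \pi^{\ast}\Omega_{\P^{n}}(\log H)\to \O_{jD}\to 0,
\]
\[
0\to \pi^{\ast}\Omega_{\P^{n}}\to \Omega_{\pi,\Delta^{(j)}}\to \O_{(m-j)D}\to 0,
\]
where \(D\) denotes the reduced preimage of \(H\). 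In this way \(\Omega_{\pi,\Delta^{(j)}}\) is exhibited as an elementary modification along \(D\) interpolating between the two extremes \(\pi^{\ast}\Omega_{\P^{n}}\) and \(\pi^{\ast}\Omega_{\P^{n}}(\log H)\).

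Next, I pass to \(\P^{n}\) via the finite Galois cover \(\pi\), writing
\[
H^{0}(Y,\mathcal{E}) = \bigoplus_{\chi\in\widehat{\Aut(\pi)}} H^{0}\bigl(\P^{n},\,(\pi_{\ast}\mathcal{E})_{\chi}\bigr),\qquad \mathcal{E}\bydef \bigotimes_{j}S^{\ell_{j}}\Omega_{\pi,\Delta^{(j)}}.
\]
Using the explicit invariant description of orbifold jet differentials given at the end of Section~\ref{se:definition}, each isotypic component \((\pi_{\ast}\mathcal{E})_{\chi}\) is identified with a subsheaf of \(\bigotimes_{j}S^{\ell_{j}}\Omega_{\P^{n}}(\log H)\otimes \O_{\P^{n}}(-c_{\chi})\) for an explicit integer \(c_{\chi}\geq 0\) that encodes both the Galois character \(\chi\) and the orbifold vanishing orders along \(H\).

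To conclude, I use the Euler sequence \(0\to \Omega_{\P^{n}}\to \O_{\P^{n}}(-1)^{\oplus(n+1)}\to \O_{\P^{n}}\to 0\), together with the inclusion \(\Omega_{\P^{n}}(\log H)\hookrightarrow \Omega_{\P^{n}}(H)\cong \Omega_{\P^{n}}\otimes\O_{\P^{n}}(d)\), to embed \(\bigotimes_{j}S^{\ell_{j}}\Omega_{\P^{n}}(\log H)\) into a direct sum of line bundles of degree at most \((d-1)N\). Hence \(H^{0}\bigl((\pi_{\ast}\mathcal{E})_{\chi}\bigr)=0\) as soon as \(c_{\chi}>(d-1)N\). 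Under the hypothesis \(k<n\), the inequality follows from Bott's vanishing applied to each Schur summand of the tensor product, the number of tensor factors being strictly less than~\(n\). Under the hypothesis \(m\leq n\), one argues by induction on \(k\), peeling off one factor at a time via the first exact sequence above; the condition \(m\leq n\) is precisely what ensures that \(c_{\chi}\) stays above \((d-1)N\) at every inductive step.

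\textbf{Main obstacle.} The technical core is the numerical estimate \(c_{\chi}>(d-1)N\) in the final step, which requires a precise identification of each isotypic component \((\pi_{\ast}\mathcal{E})_{\chi}\) and a careful accounting of the orbifold vanishing orders in terms of the local generators given in Definition~\ref{defi:differentials}. The case \(m\leq n\) must work for \emph{all} jet orders \(k\) simultaneously, making that induction genuinely more subtle than Diverio's original compact argument.
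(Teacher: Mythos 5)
Your outline and the paper's proof share the first step (reducing via the Green--Griffiths filtration to sections of $\bigotimes_{j}S^{\ell_{j}}\Omega_{\pi,\Delta^{(j)}}$) and the idea of descending to $\P^{n}$ via the Galois action, but from there the two arguments diverge sharply, and the route you sketch has a gap where the paper has its key lemma.

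The paper's proof is essentially \emph{representation-theoretic}. After noting that $\Omega_{\pi,\Delta^{(j)}}=\pi^{\ast}\Omega_{\P^{n}}$ for $j\geq m$, it uses global generation of $T_{\P^{n}}$ to \emph{strip off} all those factors: the dual surjection $\O^{\oplus r}\twoheadrightarrow T_{\P^{n}}$ gives $S^{\ell_{j}}\Omega_{\pi,\Delta^{(j)}}\hookrightarrow \O^{\oplus r'}$, so a nonzero section of the full tensor product yields a nonzero section of the truncated one with only the $p\bydef\min(k,m-1)$ factors of index $j<m$. Descending to $X$ by averaging over $\Aut(\pi)$ gives a section of a subsheaf of $\bigotimes_{j\leq p}S^{g\ell_{j}}\Omega_{\P^{n}}(\log H)$; by Pieri this decomposes into Schur powers $S^{\lambda}\Omega_{\P^{n}}(\log H)$ with at most $p<n$ parts, and the theorem of Brückmann--Rackwitz (not Bott) kills all of these on $\P^{n}$. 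The hypothesis $m\leq n$ (or $k<n$) is used precisely and only to get $p<n$, so that Brückmann--Rackwitz applies.

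Your proposal replaces this qualitative Schur-functor vanishing with a numerical estimate $c_{\chi}>(d-1)N$ obtained via the Euler sequence. This is where the argument breaks. First, the estimate is not established, only announced as the ``technical core,'' and in fact it looks false: the subsheaf $E_{k,N}\Omega_{X,\Delta}\subset E_{k,N}\Omega_{\P^{n}}(\log H)$ is \emph{not} uniformly divisible by a positive power of $\O(H)$ (the orbifold vanishing orders along $H$ are rounded quantities of size roughly $N/m$, not $N$, and can even be outweighed by the log poles of the individual factors), so there is no reason for $c_{\chi}$ to exceed $(d-1)N$, and for $m\geq2$, $d\geq3$ it generically will not. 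Second, the citation of ``Bott's vanishing'' is misplaced: Bott's theorem concerns $S^{\lambda}\Omega_{\P^{n}}\otimes\O(t)$ on $\P^{n}$ itself, whereas what is needed here is a vanishing theorem for Schur powers of the \emph{logarithmic} cotangent bundle of a hypersurface --- exactly Brückmann--Rackwitz. Third, you never exploit the global generation of $T_{\P^{n}}$ to dispose of the factors with $j\geq m$; without that step, when $k\geq n$ you are left with $k$ tensor factors and the ``fewer than $n$ parts'' argument does not even start. Finally, the exact sequences you write for $\Omega_{\pi,\Delta^{(j)}}$ have the wrong cokernels (the quotients live on $(jp_{i}/m)D$ and $((m-j)p_{i}/m)D$, not $jD$ and $(m-j)D$), though this is a comparatively minor issue. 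The upshot is that the proposal identifies the right general shape (filter, descend, invoke a vanishing) but misses both the stripping-off trick and the correct vanishing theorem, and its substitute numerical criterion appears to fail.
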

\begin{proof}
  Suppose that for some \(k\) and \(N\), \(H^{0}(Y,E_{k,N}\Omega_{\pi,\Delta})\neq0\).
  Then one infers from Proposition~\ref{prop:filtration} that for some \(\ell_{1},\dotsc,\ell_{k}\) with \(\Abs{\ell}=N\)
  \[
    S^{\ell_{1}}\Omega_{\pi,\Delta^{(1)}}
    \otimes
    S^{\ell_{2}}\Omega_{\pi,\Delta^{(2)}}
    \otimes
    \dotsb
    \otimes
    S^{\ell_{k}}\Omega_{\pi,\Delta^{(k)}}
  \]
  has some nonzero global sections.
  Note that \(\Omega_{\pi,\Delta^{(\infty)}}=\pi^{\ast}\Omega_{\P^{n}}\).
  Since \(\T_{\P^{n}}\) is globally generated, one obtains nonzero global sections of
  \[
    S^{\ell_{1}}\Omega_{\pi,\Delta^{(1)}}
    \otimes
    \dotsb
    \otimes
    S^{\ell_{p}}\Omega_{\pi,\Delta^{(p)}}
  \]
  for the largest \(p\leq k\) such that \(p<m\) (\ie{} for which \(\Delta^{(p)}>\Delta^{(\infty)}=\varnothing\)).

  Remark that a nonzero section \(\sigma\) of \(E_{k,N}\Omega_{\pi,\Delta}\) can be made invariant to yield a nonzero section of \(E_{k,gN}\Omega_{X,\Delta}\), where \(g\) is the order of the Galois group of the covering \(\pi\colon Y\to(X,\Delta)\). It is obtained by taking the pushforward along \(\pi\) of the product of the Galois conjugates of \(\sigma\), which are all nonzero. Applying this result for \(k=1\), one deduces the existence of some nonzero global sections of
  \[
    S^{g\ell_{1}}\Omega_{X,\Delta^{(1)}}
    \otimes
    \dotsb
    \otimes
    S^{g\ell_{p}}\Omega_{X,\Delta^{(p)}}
    \subseteq
    S^{g\ell_{1}}\Omega_{\P^{n}}(\log H)
    \otimes
    \dotsb
    \otimes
    S^{g\ell_{p}}\Omega_{\P^{n}}(\log H).
  \]
  The bundle on the right is a product of symmetric powers of the logarithmic cotangent bundle of a hypersurface in \(\P^{n}\), with less than \(n\) factors.
  By the Pieri rule, all partitions in its direct sum decomposition into Schur powers have therefore less than \(n\) parts.
  This yields the sought contradiction, since these Schur powers have no sections, by the vanishing theorem of Brückmann--Rackwitz~\cite{BR90} (see~\cite{Div08,Div09}).
\end{proof}
\begin{exem}
  \label{exem:nosections}
  Take \(X=\P^{2}\) and \(\Delta=\left(1-\myfrac{1}{2}\right)\mathcal{C}\), where \(\mathcal{C}\) is a smooth curve of degree \(d\geq7\). It is a pair with ample canonical bundle such that \(H^{0}\big(Y,\bigoplus_{k,N\geq1}E_{k,N}\Omega_{\pi,\Delta}\big)=\Set{0}\), for any adapted covering \(\pi\colon Y\to(X,\Delta)\).
\end{exem}

\subsection{Abelian varieties}
\label{sse:abelian}
Let \(A\) be an Abelian variety of dimension \(n\geq2\) and let \(D\) be a smooth divisor on \(A\). We start again by proving a vanishing theorem for the logarithmic cotangent bundle.
\begin{prop}
  One has
  \[
    H^{0}(A,S^{\lambda}(\Omega_{A}(\log D))\otimes L^{\vee})
    \neq
    \Set{0}
  \]
  for an ample line bundle \(L\to A\)
  if and only if
  \(
  S^{\lambda}(\Omega_{A}(\log D))=(K_{A}(\log D))^{\otimes\lambda_{1}}
  \).
\end{prop}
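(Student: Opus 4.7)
The strategy is to exploit the triviality of $\Omega_A\cong\mathcal{O}_A^n$ on the Abelian variety. The residue sequence $0\to\Omega_A\to\Omega_A(\log D)\to\iota_\ast\mathcal{O}_D\to 0$ (with $\iota\colon D\hookrightarrow A$) combined with this triviality means that applying the Schur functor $S^\lambda$ to the inclusion $\Omega_A\hookrightarrow\Omega_A(\log D)$ yields an exact sequence
\[
  0\to \mathcal{O}_A^{m_\lambda}\to S^\lambda(\Omega_A(\log D))\to R_\lambda\to 0,
\]
where $m_\lambda=\dim S^\lambda(\mathbb{C}^n)$ and $R_\lambda$ is a coherent sheaf supported on a thickening of $D$. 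Tensoring with $L^\vee$ for $L$ ample and using $H^0(A,L^\vee)=0$, a nonzero section of $S^\lambda(\Omega_A(\log D))\otimes L^\vee$ maps injectively into $H^0(A,R_\lambda\otimes L^\vee)$, reducing the problem to the analysis of $R_\lambda$ on a formal neighbourhood of $D$.

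The sheaf $R_\lambda$ admits an explicit local description. In coordinates $z_1,\ldots,z_n$ with $D=\{z_1=0\}$, the inclusion $\Omega_A\hookrightarrow\Omega_A(\log D)$ is the diagonal $\mathrm{diag}(z_1,1,\ldots,1)$ in the bases $(\diff z_i)$ and $(\diff z_1/z_1,\diff z_2,\ldots,\diff z_n)$, so its Schur transform is diagonal in the semistandard-Young-tableau basis of $S^\lambda(\mathbb{C}^n)$, with entries $z_1^{m_1(T)}$ indexed by SSYTs $T$ of shape $\lambda$ with entries in $\{1,\ldots,n\}$ ($m_1(T)$ denoting the number of $1$'s in $T$). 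Thus $R_\lambda$ decomposes, tableau by tableau, into twisted thickened structure sheaves of the form $\mathcal{O}_{m_1(T)D}\otimes(\text{twist depending on the non-one entries of } T)$. When $\lambda=(\lambda_1,\ldots,\lambda_1)$ (with $n$ equal parts) is the $n$-rectangular partition, there is a unique SSYT, $m_\lambda=1$, and the sequence collapses to $0\to\mathcal{O}_A\to\mathcal{O}_A(\lambda_1 D)\to\mathcal{O}_{\lambda_1 D}(\lambda_1 D)\to 0$; indeed $S^\lambda(\Omega_A(\log D))=\det(\Omega_A(\log D))^{\otimes\lambda_1}=\mathcal{O}_A(\lambda_1 D)=(K_A(\log D))^{\otimes\lambda_1}$ since $K_A=0$. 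This line bundle plainly has sections vanishing on an ample divisor when $D$ is big (as in the ample case needed for the application), giving the ``if'' direction.

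The harder ``only if'' direction consists in ruling out all partitions $\lambda$ of height strictly less than $n$. For such $\lambda$, the sheaf $R_\lambda\otimes L^\vee$ may already carry nontrivial global sections (e.g.\ from the SSYT whose first row is filled with ones), so one is forced to analyze the connecting homomorphism $\delta\colon H^0(A,R_\lambda\otimes L^\vee)\to H^1(A,L^\vee)^{m_\lambda}$ of the long exact sequence associated to the displayed extension: only sections in $\ker\delta$ lift to genuine global sections of $S^\lambda(\Omega_A(\log D))\otimes L^\vee$. The extension class governing $\delta$ is the Schur transform of the class of $D$ in $H^1(A,\Omega_A)$ (the Atiyah-type class of the residue extension). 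The main obstacle is to show that cup product with this class forces $\delta$ to be injective on every SSYT component arising from a non-rectangular partition, thereby rigidly pinning $\lambda$ to the rectangular shape $(\lambda_1,\ldots,\lambda_1)$ and completing the argument.
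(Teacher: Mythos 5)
Your ``if'' direction is fine and matches the paper's in spirit: for $\lambda = (\lambda_1,\dotsc,\lambda_1)$ with $n$ equal parts, $S^{\lambda}\Omega_{A}(\log D) = (\det\Omega_{A}(\log D))^{\otimes\lambda_1} = \mathcal{O}_{A}(\lambda_1 D)$, which is big since $D$ is ample and $K_A$ is trivial, and big line bundles have sections vanishing on ample divisors.

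The ``only if'' direction, however, is not actually proved; you identify the injectivity of the connecting map $\delta\colon H^0(A,R_\lambda\otimes L^\vee)\to H^1(A,L^\vee)^{m_\lambda}$ as ``the main obstacle'' and then stop there. That is exactly where the whole content of the statement lies, and nothing in the proposal establishes it. Moreover, several of the intermediate claims are stated with more precision than they deserve: the ``SSYT-diagonal'' description of $S^\lambda$ applied to $\mathrm{diag}(z_1,1,\dotsc,1)$ is only a local weight-space description and does not globalize to a direct-sum decomposition of $R_\lambda$, and the ``extension class governing $\delta$'' lives in an $\mathrm{Ext}$ group that is substantially more complicated than the class of $D$ in $H^1(A,\Omega_A)$ (Schur functors do not commute with extensions, so the extension class of your exhibited sequence is not simply the Schur image of the residue extension). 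It is unclear that the cup-product calculation you envision can be carried out, and I see no cheap mechanism forcing injectivity for all $L$ and all non-rectangular $\lambda$.

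The paper avoids all of this by a global intersection-theoretic argument. It observes that $\Omega_A(\log D)$ is nef, realizes $S^\lambda\Omega_A(\log D)$ as the direct image $p_\ast\mathscr{L}$ of a nef, relatively ample line bundle $\mathscr{L}$ on the flag bundle $F\to A$ associated to $\lambda$, and notes that a nonzero section of $S^\lambda\Omega_A(\log D)\otimes L^\vee$ with $L$ ample would force $\mathscr{L}$ to be big. It then computes the top self-intersection $s_n(\mathscr{L})$ via a Gysin push-forward formula and uses $s(\Omega_A(\log D)) = 1 - c_1(D)$ (triviality of $\Omega_A$); since the only nonzero degree-$n$ monomial on $A$ is $c_1(D)^n$, only the extreme term survives, and a degree count shows the coefficient vanishes unless the jump set of $\lambda$ is $\{n\}$, i.e.\ $\lambda$ is rectangular. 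Thus $\mathscr{L}$ is nef but not big for non-rectangular $\lambda$, which kills $H^0(F,\mathscr{L}\otimes p^\ast L^\vee)$. If you want to salvage your approach you would have to supply a genuine proof of the injectivity of $\delta$; as written, the argument has a gap at its central step.
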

\begin{proof}
  Let us first observe that \(\Omega_{A}(\log D)\) is nef.
  Since \(\Omega_{A}\) is globally generated, one is reduced to verify the nefness over \(D\).
  On \(D\), one has the following short exact sequence:
  \[0 \to \Omega_{D} \to \Omega_{A}(\log D)\rvert_{D} \to \mathcal{O}_{D} \to 0.\]
  Here, as a quotient of \(\Omega_{A}\rvert_{D}\), the vector bundle \(\Omega_{D}\) is nef.
  Thus, as an extension of nef vector bundles, \(\Omega_{A}(\log D)\rvert_{D}\) is nef.

  Consider a partition \(\lambda\), and recall (\eg{} \cite{Demailly88,Manivel}) that the Schur bundle \(S_{\lambda}(\Omega_{A}(\log D))\)
  is then the direct image of a nef line bundle \(\mathscr{L}\) on the flag bundle associated to \(\lambda\).
  Namely, let \(1\leq j_{1}<j_{2}<\dotsb<j_{m}\leq n\) be the jumps of \(\lambda\), for a certain \(m\leq n\) (\ie{} \(\lambda_{i}>\lambda_{i+1}\iff i\in\Set{j_{1},\dotsc,j_{m}}\)),
  and let \(F\) be the bundle of flags of subspaces with codimension \(j_{1},\dotsc,j_{m}\) in the fibers of \(\Omega_{A}(\log D)\).
  Let \(U^{j_{0}},\dotsc,U^{j_{m+1}}\) be the universal subbundles of codimension \(j_{0}<j_{1}<\dotsc<j_{m}\leq j_{m+1}\) on \(F\),
  where by convention
  \(j_{0}\bydef0\)
  and
  \(j_{m+1}\bydef n\).
  Then
  \[
    \mathscr{L}
    \bydef
    \bigotimes_{p=1}^{m}
    \det(U^{j_{p-1}}/U^{j_{p}})^{\otimes \lambda_{j_{p}}}.
  \]
  We will now study the bigness of \(\mathscr{L}\).
  To prove that \(\mathscr{L}\) is not big, it is sufficient to observe that the Segre number \(s_{n}(\mathscr{L})\) is zero.
  Using the Gysin formula from \cite[Prop.~1.2]{DP1} for the flag bundle \(F\to A\) (we transform a little bit), one gets the following expression for \(s_{n}(\mathscr{L})\):
  \[
    \resizebox{\linewidth}{!}{\(\displaystyle
      (-1)^{n}
      [t_{1}^{n}\dotsm t_{n}^{1}]
      \left(
        (\lambda_{1}t_{1}+\dotsb+\lambda_{n}t_{n})^{n}
        \tprod_{p=1}^{m}(t_{j_{p}+1}\dotsm t_{j_{p+1}})^{-j_{p}}
        \tprod_{1\leq i<j\leq n}(t_{i}-t_{j})
        \tprod_{1\leq i\leq n}t_{i}s_{\myfrac{1}{t_{i}}}(\Omega_{A}(\log D))
      \right),
    \)}
  \]
  where for a monomial \(m\) and a Laurent series \(P\) in the formal variables \(t_{1},\dotsc,t_{n}\), \([m](P)\) means the coefficient of \(m\) in \(P\).

  Now, the residue exact sequence on \(A\) reads as follows:
  \[0 \to \Omega_{A} \to \Omega_{A}(\log D) \to \mathcal{O}_{D} \to 0.\]
  Therefore, by the Whitney sum formula, we obtain the equality of total Segre classes:
  \[
    s(\Omega_{A}(\log D))=s(\Omega_{A})\cdot s(\mathcal{O}_{D})=s(\Omega_{A})\cdot c(\O(-D)).
  \]
  The last equality follows again from the Whitney sum formula applied on the short exact sequence
  \(0\to\O_{A}(-D)\to\O_{A}\to\O_{D}\to0\).
  The bundle \(\Omega_{A}\) being trivial we obtain \(s(\Omega_{A}(\log D))=1-c_{1}(D)\).
  Replacing in the above expression, the number \(s_{n}(\mathscr{L})\) becomes:
  \[
    (-1)^{n}
    [t_{1}^{n}\dotsm t_{n}^{1}]
    \left(
      (\lambda_{1}t_{1}+\dotsb+\lambda_{n}t_{n})^{n}
      \tprod_{p=1}^{m}(t_{j_{p}+1}\dotsm t_{j_{p+1}})^{-j_{p}}
      \tprod_{1\leq i<j\leq n}(t_{i}-t_{j})
      \tprod_{1\leq i\leq n}(t_{i}-c_{1}(D))
    \right).
  \]
  This coefficient is clearly a linear combination of \(1,\dotsc,c_{1}(D)^{n}\) but, for dimensional reasons, the only such number that is nonzero on \(A\) is \(c_{1}(D)^{n}\).
  In other words
  \[
    s_{n}(\mathscr{L})
    =
    [t_{1}^{n}\dotsm t_{n}^{1}]
    \left(
      (\lambda_{1}t_{1}+\dotsb+\lambda_{n}t_{n})^{n}
      \tprod_{p=1}^{m}(t_{j_{p}+1}\dotsm t_{j_{p+1}})^{-j_{p}}
      \tprod_{1\leq i<j\leq n}(t_{i}-t_{j})
    \right)
    c_{1}(D)^{n}.
  \]

  The degree of the polynomial under consideration is \( n(n+1)/2-\sum_{p=1}^{m}(j_{p+1}-j_{p})j_{p} \).
  As a consequence, if \(\sum_{p=1}^{m}(j_{p+1}-j_{p})j_{p}>0\), the coefficient of \(t_{1}^{n}\dotsm t_{n}^{1}\) is \(0\).
  To conclude, it remains to observe that \(\sum_{p=1}^{m}(j_{p+1}-j_{p})j_{p}=0\) if and only if \(S_{\lambda}(\Omega_{A}\log D)\) is a tensor power of the canonical bundle (\ie{} \(j_{1}=n=j_{m+1}\)).

  Now, since \(\mathscr{L}\) is relatively ample and \(p_{\ast}\mathscr{L}=S^{\lambda}\Omega_{A}(\log D)\), where \(p\colon F\to A\):
  \[
    \exists L\text{ ample, }
    H^{0}(A,S^{\lambda}\Omega_{A}(\log D)\otimes L^{\vee})\neq 0
    \implies
    \mathscr{L}\text{ big}.
  \]

  The only if direction follows directly from the fact that \((A,D)\) is of log general type.
\end{proof}
\begin{rema}
  Note that in general the bigness of \(\mathscr{L}\) is not equivalent to the bigness of the Serre line bundle on \(\P(S_{\lambda}(\Omega_{A}(\log D)))\).
  The first one is related to the sections of \(S^{m\lambda}\Omega_{A}(\log D)\) which is only a direct factor in \(S^{m}(S^{\lambda}\Omega_{A}(\log D))\)
  and also these line bundles could lie on bases with different dimensions.
  It is clear that if \(\lambda\) has \(n\) parts, \(S^{\lambda}(\Omega_{A}(\log D))\) is big.
  Indeed
  \[
    S^{\lambda}(\Omega_{A}(\log D))
    =
    (K_{A}(\log D))^{\otimes \lambda_{n}}
    \otimes
    S^{(\lambda_{1}-\lambda_{n},\dotsc,\lambda_{n-1}-\lambda_{n})}(\Omega_{A}(\log D)),
  \]
  which is the product of a big line bundle by a nef vector bundle.
\end{rema}

As an immediate corollary, we obtain examples of orbifolds of general type satisfying the Green--Griffiths--Lang Conjecture \ref{conj:orbi-lang} without any nonzero global jet differentials vanishing on an ample divisor.
\begin{coro}
  \label{coro:abelian.counterexample}
  Let \(A\) be an Abelian variety of dimension \(n\geq 2\) and \(D\) a smooth ample divisor on \(A\). Then, for any \(1<m\leq n\), the orbifold \((A,(1-\myfrac{1}{m})D)\) satisfies the Green--Griffiths--Lang Conjecture~\ref{conj:orbi-lang} but has no nonzero global jet differentials vanishing on an ample divisor.
\end{coro}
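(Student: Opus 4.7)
The corollary splits into two independent claims. The Green--Griffiths--Lang part is immediate from Theorem~\ref{theo:orbiab}: every orbifold entire curve into $(A,(1-\myfrac{1}{m})D)$ is algebraically degenerate, and for Abelian varieties this promotes to the existence of a single proper subvariety containing the images of all such curves (standard from the Bloch--Ochiai description of entire curves in Abelian varieties).

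For the nonexistence part, I would transpose the argument of Proposition~\ref{prop:nosections} to the Abelian setting, using the Schur bundle vanishing just established on $A$ in place of the Brückmann--Rackwitz theorem on $\P^{n}$. The plan is to suppose for contradiction that $H^{0}(Y,E_{k,N}\Omega_{\pi,\Delta}\otimes\pi^{\ast}L^{\vee})\neq\Set{0}$ for some ample $L\to A$, then apply the Green--Griffiths filtration (Proposition~\ref{prop:filtration}) to extract a nonzero section of some graded piece
\[
S^{\ell_{1}}\Omega_{\pi,\Delta^{(1)}}\otimes\dotsb\otimes S^{\ell_{k}}\Omega_{\pi,\Delta^{(k)}}\otimes\pi^{\ast}L^{\vee},\quad\Abs{\ell}=N.
\]
Since $\Delta$ has no component of infinite multiplicity, $\Delta^{(j)}=\varnothing$ for every $j\geq m$; combined with the triviality of $\Omega_{A}$ on the Abelian variety, this makes the factors with $j\geq m$ trivial, and they can be split off, leaving at most $m-1\leq n-1$ nontrivial tensor factors.

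The next step is the Galois-invariance manoeuvre of Proposition~\ref{prop:nosections}: multiplying all conjugates of the remaining section and pushing forward by $\pi$ yields a nonzero section on $A$ of a subbundle of
\[
S^{g\ell_{1}}\Omega_{A}(\log D)\otimes\dotsb\otimes S^{g\ell_{m-1}}\Omega_{A}(\log D)\otimes L^{-g},
\]
where $g$ is the order of the Galois group of $\pi$. Iterated Pieri's rule decomposes this tensor product as a direct sum of Schur bundles $S^{\mu}\Omega_{A}(\log D)\otimes L^{-g}$ with $\mu$ a partition of length at most $m-1\leq n-1$; since $L^{g}$ is ample and $\mu$ cannot have $n$ parts (let alone be rectangular of height $n$), the Schur bundle proposition just established precludes any nonzero section, contradicting the assumption.

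The argument is largely a transposition of Proposition~\ref{prop:nosections}; the main technical subtlety, namely the careful bookkeeping in the invariance-and-push-forward step, is already handled there. The essential new inputs are the Schur bundle proposition on $A$ just proved above (which replaces the Brückmann--Rackwitz vanishing on $\P^{n}$) together with the observation that $\Delta^{(j)}=\varnothing$ for $j\geq m$ and $\Omega_{A}$ is trivial, which uses the hypothesis $m\leq n$ to limit the effective number of tensor factors to at most $n-1$ and thus keeps the Pieri decomposition within the forbidden regime.
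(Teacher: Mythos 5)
Your proposal is correct and takes essentially the same route as the paper: split via Theorem~\ref{theo:orbiab} (and the degeneracy locus argument), then for nonexistence apply the Green--Griffiths filtration, peel off the trivial $\pi^{\ast}\Omega_{A}$-factors (the paper keeps $n-1$ of them while you sharpen to $m-1$, but both suffice), Galois-invariantize and push forward, and invoke Pieri plus the Schur-bundle vanishing proposition. The only differences are cosmetic: your twist exponent is $L^{-g}$ rather than the paper's $(L^{g^{n-1}})^{\vee}$, which is immaterial since only positivity of the twist is used, and you make explicit the (true but unstated in the paper) promotion of per-curve algebraic degeneracy to a single exceptional locus.
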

\begin{proof}
  By Theorem~\ref{theo:orbiab}, \((A,\Delta)\bydef(A, (1-\myfrac{1}{m})D)\) satisfies conjecture \ref{conj:orbi-lang}.

  Let \(\pi\colon Y\to(A,\Delta)\) be an adapted covering for the pair \((A,\Delta)\).
  Suppose that for some \(k\) and \(N\), \(H^{0}(Y,E_{k,N}\Omega_{\pi,\Delta}\otimes \pi^{\ast}L^{\vee})\neq0\) for some ample line bundle \(L\).
  Then, since \(m\leq n\), one infers from Proposition~\ref{prop:filtration} and from the triviality of \(\Omega_{\pi,\Delta^{(n)}}=\pi^{\ast}\Omega_{A}\) that for some \(\ell_{1},\dotsc,\ell_{n-1}\)
  \[
    S^{\ell_{1}}\Omega_{\pi,\Delta^{(1)}}
    \otimes
    \dotsb
    \otimes
    S^{\ell_{n-1}}\Omega_{\pi,\Delta^{(n-1)}}
    \otimes
    \pi^{\ast}L^{\vee}
  \]
  has some nonzero global sections.
  This would imply that
  \[
    S^{g\ell_{1}}\Omega_{A}(\log D)
    \otimes
    \dotsb
    \otimes
    S^{g\ell_{n-1}}\Omega_{A}(\log D)
    \otimes
    (L^{g^{n-1}})^{\vee}
  \]
  has nonzero global sections (\(g\bydef\abs{\Aut(\pi)})\)).
  Combined with the previous proposition, this yields a contradiction because according to the Pieri rule
  \(
  S^{g\ell_{1}}\Omega_{A}(\log D)
  \otimes
  \dotsb
  \otimes
  S^{g\ell_{n-1}}\Omega_{A}(\log D)
  \otimes
  (L^{g^{n-1}})^{\vee}
  \)
  is the direct sum of some Schur powers
  \(S^{\lambda}(\Omega_{1}(\log D))\otimes(L^{g^{n-1}})^{\vee}\)
  for partitions \(\lambda\) with at most \(n-1\) parts.
\end{proof}

\begin{rema}
  Let us recall that the key tool to obtain the degeneracy of orbifold entire curves  in Theorem~\ref{theo:orbiab} is Nevanlinna theory: more precisely, in \cite{YamAb} Yamanoi establishes a remarkable Second Main Theorem with the best truncation level one. Combined with an hypothesis of ramification (as in the definition of orbifold entire curves), one immediately gets the application to the orbifold setting.
  It is noteworthy that the proof of Yamanoi uses jet bundles and lifts of entire curves to jets spaces (but does not involve jet differentials vanishing on an ample divisor!). Earlier works by Siu and Yeung \cite{SY03} use meromorphic jet differentials to establish a Second Main Theorem with truncation level depending on the boundary divisor. In applications, especially in the orbifold setting, obtaining the lowest truncation level is very important.
\end{rema}

\subsection{Kummer and ``general'' K3 surfaces}

We now show that the vanishing of orbifold jet differentials for Abelian surfaces gives a similar conclusion for Kummer \(K3\) surfaces and for ``general'' \(K3\) surfaces equipped with big and nef smooth divisors.

We first describe the situation and data relevant to the case of Kummer surfaces.

Let \(p_{0}\colon A_{0}\to S_{0}\) be the double cover from an Abelian surface \(A_{0}\) onto its associated Kummer quotient surface \(S_{0}\). Let \(D_{0}\subset S_{0}\) be a smooth irreducible ample divisor on \(S_{0}\) which avoids its \(16\) singular points. Let \(\alpha\colon A\to A_{0}\) (resp. \(\beta:S\to S_{0}\)) be the blow-up of the \(16\) corresponding points on \(A_{0}\) (resp. \(S_{0}\)), and \(p\colon A\to S\) the induced double cover. Let \(D\subset S\) the inverse image of \(D_{0}\) in \(S\), and \(D_{0}'\subset A_{0}, D'\subset A\) its inverse images there. Write \(\Delta_{0}\bydef(1-\frac{1}{2})D_{0}\), and similarly for its inverse images \(\Delta,\Delta'_{0},\Delta'\) on \(S,A_{0},A\).

Let \(\pi_{0}\colon Y_{0}\to S_{0}\) be a cover adapted to \((S_{0},\Delta_{0})\), so chosen that its ramification locus avoids the \(16\) singular points of \(S_{0}\). By base-changing with the relevant covers or blow-ups, we obtain covers \(\pi\colon Y\to S,\pi'_{0}\colon Y'_{0}\to A_{0},\pi'\colon Y'\to A'\) respectively adapted to \((S,\Delta)\), \((A_{0},\Delta'_{0})\) and \((A,\Delta')\). To simplify notation, we still denote with \(\beta\colon Y\to Y_{0}, p\colon Y'\to Y, \alpha\colon Y'\to Y'_{0}\) the maps induced by these base-changes.

For each \(k,N>0\), we thus also get natural injective sheaf maps: \(p^{*}\colon E_{k,N}\Omega_{\pi,\Delta}\to E_{k,N}\Omega_{\pi',\Delta'}\) and \(\alpha^{*}\colon E_{k,N}\Omega_{\pi'_{0},\Delta'_{0}}\to E_{k,N}\Omega_{\pi',\Delta'}\) which are isomorphic outside of the inverse images of the \(16\) singular points of \(S_{0}\).

We now denote by \(H_{0}\) a very ample line bundle on \(S_{0}\), and \(H,H',H'_{0}\) its inverse images on \(S,A, A_{0}\).

\begin{prop}
  \label{prop:kummer}
  The notations being as above, let \(B\) be an ample line bundle on \(S\), and \(B_{Y}\) its inverse image on \(Y\). Then: for any \(k,N>0\), \(H^{0}(Y,E_{k,N}\Omega_{\pi,\Delta}\otimes B_{Y}^{-1})=\Set{0}\).
\end{prop}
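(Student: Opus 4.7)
The strategy is to pull a putative nonzero section $\sigma\in H^0(Y,E_{k,N}\Omega_{\pi,\Delta}\otimes B_Y^{-1})$ back to $Y'$, identify it with a section on the Abelian adapted cover $Y_0'$, and contradict Corollary~\ref{coro:abelian.counterexample}. That corollary applies on the Abelian side because $A_0$ is Abelian of dimension $n=2$, the divisor $D_0'=p_0^{-1}D_0$ is smooth and ample on $A_0$ (since $D_0$ avoids the singular locus of $S_0$, the map $p_0$ is \'etale in a neighborhood of $D_0$, so $D_0'\to D_0$ is an \'etale cover of a smooth ample divisor), and the orbifold multiplicity is $m=2\leq n$.

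\textbf{Step 1: Pullback to $Y'$.} Because $p\colon A\to S$ is \'etale over a neighborhood of $D$, one has $\Delta'=p^*\Delta$, so $p\colon(A,\Delta')\to(S,\Delta)$ is an orbifold morphism. Combining the resulting inclusion $p^*E_{k,N}\Omega_{\pi,\Delta}\hookrightarrow E_{k,N}\Omega_{\pi',\Delta'}$ (already recorded in the setup, compare Remark~\ref{rema:pullback}) with the pullback of $\sigma$ along $p\colon Y'\to Y$ produces a nonzero
\[
  p^*\sigma\in H^0\bigl(Y',E_{k,N}\Omega_{\pi',\Delta'}\otimes(p^*B_Y)^{-1}\bigr).
\]

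\textbf{Step 2: Identification with a section on $Y_0'$.} Decompose the ample bundle $B$ on $S$ as $B=\beta^*L-\sum c_iF_i$ with $L\in\mathrm{Pic}(S_0)$, noting that $c_i>0$ follows from $0<B\cdot F_i=2c_i$ (using $F_i^2=-2$). Since $p$ ramifies along each $(-2)$-curve $F_i$ with $p^*F_i=2E_i'$, one gets $\alpha^*p_0^*L=p^*\beta^*L=p^*B+2\sum c_iE_i'$, whence setting $M\bydef(\pi_0')^*(p_0^*L)^{-1}$ on $Y_0'$ one has $\alpha^*M\cong(p^*B_Y)^{-1}$ on the open $U:=\alpha^{-1}(V)\subset Y'$, where $V\subset Y_0'$ denotes the complement of the finite preimage of the $16$ blown-up points and has codimension $2$ in the smooth variety $Y_0'$. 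Over $U$ the sheaves $E_{k,N}\Omega_{\pi',\Delta'}$ and $\alpha^*E_{k,N}\Omega_{\pi_0',\Delta_0'}$ canonically agree (both orbifold structures are trivial there), so restricting $p^*\sigma$ to $U$ and transporting through $\alpha|_U\colon U\overset{\sim}{\to}V$ yields a section of $E_{k,N}\Omega_{\pi_0',\Delta_0'}\otimes M$ on $V$. Hartogs' theorem applied to the locally free sheaf $E_{k,N}\Omega_{\pi_0',\Delta_0'}\otimes M$ then extends this to a nonzero global section $\tau\in H^0\bigl(Y_0',E_{k,N}\Omega_{\pi_0',\Delta_0'}\otimes M\bigr)$.

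\textbf{Step 3: Ampleness and conclusion.} It remains to check that $M^{-1}=(\pi_0')^*p_0^*L$ is ample on $Y_0'$, or equivalently that $p_0^*L$ is ample on $A_0$. Using the explicit expression $\alpha^*p_0^*L=p^*B+2\sum c_iE_i'$, a routine intersection computation on the surface $A$ gives $(\alpha^*p_0^*L)^2=2B^2+4\sum c_i^2>0$, and $\alpha^*p_0^*L\cdot C\geq0$ for every irreducible curve $C\subset A$ (vanishing exactly on the exceptional $E_i'$ and strictly positive otherwise, since $p^*B$ is ample and the $E_i'$ are disjoint). Thus $\alpha^*p_0^*L$ is nef and big on $A$; as $\alpha$ is birational, so is $p_0^*L$ on the Abelian surface $A_0$, where nef $+$ big $=$ ample. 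Then $\tau$ contradicts Corollary~\ref{coro:abelian.counterexample} and the proposition follows.

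\textbf{Main obstacle.} The delicate part is Step 3: one has to track the exceptional divisors through the double cover $p$ (with ramification $p^*F_i=2E_i'$), verify that the pushed-down line bundle is the pullback of an ample bundle from $A_0$, and recover ampleness there via the special fact that nef $+$ big $=$ ample on Abelian varieties, since the intermediate bundle $L$ on the singular Kummer $S_0$ need not itself be ample.
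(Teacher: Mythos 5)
Your proposal is correct and follows essentially the same route as the paper: pull back by $p$, identify with a section on the Abelian adapted cover $Y_0'$ via Hartogs, and contradict Corollary~\ref{coro:abelian.counterexample}. However, you are more careful on the one step the paper's proof leaves implicit (and whose wording is actually imprecise). The paper asserts the vanishing follows ``since $k\alpha^*(B_0')-p^*(B)$ is effective for $k$ big enough,'' but an inclusion of line bundles in that direction gives $H^0(E'\otimes\alpha^*(B_0')^{-k})\hookrightarrow H^0(E'\otimes p^*B^{-1})$, which is the wrong way around to conclude. The correct fact, which you prove explicitly, is that the line bundle $p_0^*L=\alpha_*(p^*B)$ descending to $A_0$ is itself ample; you verify this by showing $\alpha^*p_0^*L$ is nef (zero exactly on the $E_i'$) and big, and then invoking the special fact that nef $+$ big $=$ ample on Abelian surfaces. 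Your Step~3 fills a genuine gap in the paper's exposition, and the bookkeeping of how $p^*B$ decomposes across the blow-up and the double cover is accurate (including the relation $d_i=2c_i$ and the self-intersection $2B^2+4\sum c_i^2>0$). One tiny technicality that would not change the outcome: the decomposition $B=\beta^*L-\sum c_iF_i$ on $S$ need only hold with $\Q$-coefficients, since $L=\beta_*B$ is a Weil divisor on the singular $S_0$ and may fail to be Cartier; but after pulling back to the smooth $A$ (where $\mathrm{Pic}(A)=\alpha^*\mathrm{Pic}(A_0)\oplus\bigoplus\Z E_i'$) everything becomes integral, which is all your argument actually uses.
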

\begin{proof}
  The natural map:
  \[
    p^{\ast}
    \colon
    H^{0}(Y,E_{k,N}\Omega_{\pi,\Delta}\otimes B_{Y}^{-1})
    \to
    H^{0}(Y',E_{k,N}\Omega_{\pi',\Delta'}\otimes p^{*}(B_{Y}^{-1}))
  \]
  is obviously injective, and by Hartogs theorem, the natural map:
  \[
    \alpha^{\ast}
    \colon
    H^{0}(Y'_{0},E_{k,N}\Omega_{\pi'_{0},\Delta'_{0}}\otimes {B'}_{0}^{-1})
    \to
    H^{0}(Y',E_{k,N}\Omega_{\pi',\Delta'}\otimes \alpha^{*}({B'}_{0}^{-1}))
  \]
  is isomorphic, for any ample line bundle \(B'_{0}\) on \(A_{0}\) (its inverse image on \(Y'_{0}\) being written in the same way). From the (proof of the) preceding Corollary \ref{coro:abelian.counterexample}, we know that \(H^{0}(Y'_{0},E_{k,N}\Omega_{\pi'_{0},\Delta'_{0}}\otimes {B'}_{0}^{-1})=\{0\}\). This implies the claimed vanishing, since \(k.\alpha^{*}({B'}_{0})-p^{*}(B)\) is effective, for \(k\) big enough.
\end{proof}

We now consider the preceding orbifold pair \((S,\Delta)\), together with a marking for \(H^{2}(S,\Z)\). Notice that the class \([D]\) of \(D=2.\Delta\) in \(H^{2}(S,\Z)\) is what is called a ``pseudo-polarisation'' (\ie{} a big and nef class) in \cite{SP}. Associated to the pair \((S,[D])\) is a (nonseparated) fine moduli space of marked projective \(K3\) surfaces \((S_{t},[D_{t}]), t\in T\). Let \(f\colon\Sigma\to T\) be the associated family of \(K3\) surfaces, together with the line bundle \(\mathcal D'\) on \(\Sigma'\) inducing \(D_{t}\) on \(S_{t}\), for each \(t\in T'\) (base-changing from \(T\) to \(\P(f_{*}(\mathcal D))\bydef T')\), indicated with a ``prime'' subscript. The map \(f'\) being locally projective, we can (locally) construct a simultaneous cover \(\pi'\colon \mathcal Y'\to \Sigma'\) adapted to \(\mathcal D'\). We now consider the direct image sheaves
\(
\mathcal E_{k,N}
\bydef
(f'\circ \pi')^{\ast}(E_{k,N}\Omega_{\pi',\Delta'_{T}}
\otimes
\mathcal B^{-1})
\),
for \(\mathcal B\) relatively ample on \(\mathcal Y'\), and \(\Delta'_{T}\bydef\frac{1}{2}\cdot\mathcal{D}'\). By the preceding Proposition~\ref{prop:kummer}, these sheaves all vanish for \(t=0\), with \((S,\Delta)_{t=0}\) our initial Kummer orbifold pair. We thus deduce that these sheaves all vanish for \(t\) ``general'' in \(T'\) (that is: outside of a countable union of proper Zariski closed subsets of \(T'\)).

\begin{rema}
  One can of course wonder whether this result holds for all pairs \((S,\frac{1}{2}\cdot D)\) with \(S\) an arbitrary \(K3\) surface and \(D\) an ample smooth divisor, or even for \((X,\frac{1}{m}\cdot D)\) for \(X\) projective with \(K_{X}\) trivial, \(D\) smooth ample, and \(m\leq n\bydef\dim(X)\).
  For the ``general'' member of the known families of Hyperkähler manifolds, the preceding argument can probably be adapted, but it would be more interesting to have an intrinsic, deformation-free, argument.
\end{rema}

\begin{rema}
  Example~\ref{exem:nosections}, Corollary~\ref{coro:abelian.counterexample} and Proposition~\ref{prop:kummer}
  show clearly that in the general orbifold situation,
  one cannot expect to fully establish the Green--Griffiths--Lang conjecture
  by using only the approach of jet bundles.
  Corollary~\ref{coro:abelian.counterexample} proves the left-to-right direction of Conjecture~\ref{conj:orbiDem} for Abelian varieties.
\end{rema}
\begin{rema}
  Corollary~\ref{coro:orbiCartan} and Corollary~\ref{coro:abelian.counterexample} also illustrate that Nevanlinna theory and the theory of orbifold jet differentials introduced in this paper produce positive complementary results towards the orbifold Green--Griffiths--Lang conjecture.
\end{rema}

\begin{acknowledgements}
  E.R. would like to thank Mihai P{\u{a}}un and Nessim Sibony for fruitful discussions related to this article.
  The authors would like to thank Jean-Pierre Demailly for pointing out that we were not considering enough morphisms in the case of rational orbifold multiplicities. This remark has led to Definition~\ref{defi:corr}.
\end{acknowledgements}

\bibliographystyle{amsalpha}
\bibliography{CM6404}
\end{document}